\newtheorem{Theorem}{Theorem}[section]
\newtheorem{Lemma}{Lemma}[section]
\begin{document}

\def\eR{\mathbf{R}}
\def\Rd{{\eR}^d}
\def\Rdd{{\eR}^{d\times d}}
\def\Rdsym{{\eR}^{d\times d}_{sym}}
\def\eN{\mathbf{N}}
\def\eZ{\mathbf{Z}}

\def\dd{\mbox{d}}
\newcommand{\essinf}{\operatorname{ess\,inf}}
\newcommand{\esssup}{\operatorname{ess\,sup}}
\newcommand{\supp}{\operatorname{supp}}
\newcommand{\dist}{\operatorname{dist}}
\newcommand{\intr}{\operatorname{int}}
\def\d{\; \mathrm{d}}
\def\dx{\; \mathrm{d}x}
\def\dy{\; \mathrm{d}y}
\def\dz{\; \mathrm{d}z}
\def\diff{\mathsf{d}}
\def\div{\mathop{\mathrm{div}}\nolimits}
\def\diam{\mathrm{diam}}

\def\Aeps{\mathbf{A}^\varepsilon}
\def\ueps{\mathbf{u}^\varepsilon}
\def\boldp{\mathbf{p}}
\def\bolds{\mathbf{s}}
\def\boldu{\mathbf{u}}
\def\boldv{\mathbf{v}}
\def\boldw{\mathbf{w}}
\def\boldz{\mathbf{z}}
\def\boldA{\mathbf{A}}
\def\boldB{\mathbf{B}}
\def\boldD{\mathbf{D}}
\def\boldF{\mathbf{F}}
\def\boldI{\mathbf{I}}
\def\boldO{\mathbf{O}}
\def\boldP{\mathbf{P}}
\def\boldT{\mathbf{T}}
\def\boldU{\mathbf{U}}
\def\boldUeps{\mathbf{U}^\varepsilon}
\def\boldV{\mathbf{V}}
\def\boldW{\mathbf{W}}
\def\boldZ{\mathbf{Z}}
\def\bzero{\mathbf{0}}
\def\uk{\boldu^{k}}
\def\boldAk{\boldA^k}
\def\balpha{\boldsymbol{\alpha}}
\def\bbeta{\boldsymbol{\beta}}
\def\boldzeta{\boldsymbol{\zeta}}
\def\boldeta{\boldsymbol{\eta}}
\def\boldchi{\boldsymbol{\chi}}
\def\boldxi{\boldsymbol{\xi}}
\def\bphi{\boldsymbol{\varphi}}
\def\bpsi{\boldsymbol{\psi}}
\def\tbpsi{\tilde{\bpsi}}
\def\WCon{\xrightharpoonup{\hphantom{2-s}}}
\def\WSCon{\xrightharpoonup{\raisebox{0pt}[0pt][0pt]{\hphantom{\(\scriptstyle{2-s}\)}}}^*}
\def\WTSSCon{\xrightharpoonup{\raisebox{0pt}[0pt][0pt]{\(\scriptstyle{2-s}\)}}^*}
\def\STSCon{\xrightarrow{2-s}}
\def\SCon{\xrightarrow{\raisebox{0pt}[0pt][0pt]{\hphantom{\(\scriptstyle{2-s}^*\)}}}}
\def\ModConv#1{\xrightarrow{\mathmakebox[1.5em]{#1}}}
\def\ModConvM{\ModConv{M}}
\def\NabOv{\overline\nabla}

\title{Homogenization of nonlinear elliptic systems in nonreflexive Musielak-Orlicz spaces}
\author{Miroslav Bul\'\i\v cek$^1$, Piotr Gwiazda$^{2,3}$, Martin Kalousek$^2$, Agnieszka \'Swierczewska-Gwiazda$^3$}
\date{}
\maketitle
\abstract{We study the homogenization process for families of strongly nonlinear elliptic systems with the homogeneous Dirichlet boundary conditions. The growth and the coercivity of the elliptic operator is assumed to be indicated by a general  inhomogeneous anisotropic $\mathcal{N}$--function $M$, which may also depend on the spatial variable, i.e., the homogenization process will change the underlying function spaces and the nonlinear elliptic operator at each step. The problem of homogenization of nonlinear elliptic systems has been solved  for the $L^p-$setting with restrictions either on constant exponent or variable exponent that is assumed to be additionally log-H\"older continuous. These results correspond to a very particular case of $\mathcal{N}$--functions satisfying both $\Delta_2$ and $\nabla_2$--conditions. We show that for general $M$ satisfying a condition of log-H\"older type continuity,  one can provide a rather general theory without any assumption on the validity of neither $\Delta_2$ nor $\nabla_2$--conditions.}
\\
\textbf{Key words}: nonlinear elliptic problems, Musielak--Orlicz spaces, periodic homogenization, two-scale convergence method
\\
\textbf{MSC 2010}: 35J60, 74Q15.

\footnotetext[1]{Charles University, Faculty of Mathematics and
Physics, Mathematical Institute, Sokolovsk\'{a}~83,
186~75~Prague~8, Czech~Republic}
\footnotetext[2]{Institute of Mathematics, Polish Academy of Sciences}
\footnotetext[3]{Faculty of Mathematics, Informatics and Mechanics
University of Warsaw, Banacha 2, 02-097 Warszawa, Poland}

\section{Introduction}

Our primary interest is to study the behaviour of the following system as $\varepsilon \to 0_+$:
\begin{equation}\label{StudPr}
\begin{aligned}
\div \boldA\left(\frac{x}{\varepsilon},\nabla\ueps\right)& =\div\boldF &&\text{ in }\Omega,\\
\ueps&=0 &&\text{ on }\partial\Omega,
\end{aligned}
\end{equation}
where $\Omega\subset\Rd,d\geq 2$ is a bounded domain and $\ueps\!:\Omega \to \eR^N$ with $N\in \mathbf{N}$ is an unknown and  $\boldF\!:\Omega\rightarrow\eR^{d\times N}$ and $\boldA\!: \Rd\times \eR^{d\times N}\rightarrow \eR^{d\times N}$ are given. The operator $\boldA$ is periodic with respect to the first variable and  strongly nonlinear with respect to the second variable  with the growth prescribed by a spatially inhomogeneous and in general anisotropic $\mathcal{N}$--function. We aim to study the most general class of operators, $\boldA$, which will however lead to the problems with the so-called nonstandard growth conditions, i.e., conditions given via general Musielak--Orlicz spaces that may vary with changing parameter~$\varepsilon$ and may heavily depend on the spatial variable.

The studies on homogenization of elliptic equations go back to the fundamental lecture of Tartar~\cite{Tartar2} and also consequent works \cite{Tartar,SP,OZ82}  and are of the highest interest among  the properties of elliptic systems with  periodic structure. The homogenization process was also the starting point for developing the two-scale convergence technique, which was introduced by  Allaire \cite{Al92} and later generalized to the framework of more general operators in \cite{ZP11}. Following ideas presented in  \cite{ZP11}, the suggested homogenization process, i.e., letting $\varepsilon \to 0$ in \eqref{StudPr}, one expects that $\boldu^{\varepsilon} \to \boldu$, where $\boldu$ is a solution to the following nonlinear elliptic problem with the nonlinear operator independent of a spatial variable, i.e.,
\begin{equation}\label{HPr}
	\begin{alignedat}{2}	
			\div\hat\boldA(\nabla \boldu)&=\div\boldF &&\text{ in }\Omega,\\
			\boldu&=0 &&\text{ on }\partial\Omega.
	\end{alignedat}
\end{equation}
Here, we  denoted $Y:=(0,1)^d$ and defined the operator $\hat\boldA$  as
\begin{equation*}
	\hat\boldA(\boldxi):=\int_Y\boldA(y,\boldxi+\nabla\boldw_{\boldxi}(y))\dy,
\end{equation*}
and for any $\boldxi\in \eR^{d \times N}$, the function $\boldw_{\boldxi}:\eR^d \to \eR^N$ is the solution of the cell problem, i.e., $\boldw_{\boldxi}$ is  $Y$-periodic and solves in the sense of distributions
\begin{equation*}
	\div\boldA(y,\boldxi+\nabla \boldw_{\boldxi}(y))=0\text{ in }Y.
\end{equation*}

Our  main goal  is to  justify rigorously the above mentioned heuristic procedure based on~\cite{ZP11}, where the setting of non--standard growth conditions  of the operator $\boldA$ was  considered.  The authors studied the case  of variable exponent $p(x/\varepsilon)$. To identify an elliptic operator in the homogenized problem, i.e., the operator $\hat\boldA$ appearing in \eqref{HPr}, they  applied a variant of the compensated compactness argument. For such an approach, one however requires that the Helmholtz-like decomposition holds for functions belonging to the involved function spaces, which in this case were the variable  exponent Lebesgue spaces with log-H\"{o}lder continuous exponent.  It has to be pointed out at the very beginning that a decomposition of a similar type does not hold  for problems with growth conditions considered here. The main novelty of the paper is performing  the limit procedure $\varepsilon \to 0$ in \eqref{StudPr} even though  the underlying function spaces do not allow for using the methods based on the Helmoltz decomposition. The results, for various cases,  are summarized in Theorem~\ref{Thm:Main}. It is worth noticing that we do not require that the $\mathcal{N}$--function $M$, corresponding to the operator $\boldA$,  satisfies $\Delta_2$ or  $\nabla_2$--condition, and the only  assumption  is a certain form of  log-H\"{o}lder continuity with respect to the spatial variable of the $\mathcal{N}$--function $M$. In particular, the key result of the paper is an introduction of a completely new technique, which is not based on the Hemoltz-like decomposition, but rather deals with the combination of two--scale limit and the so--called modular convergence.

We first formulate certain minimal assumptions on the operator $\boldA$, that will be used in what follows:
\begin{enumerate}[label=(A\arabic*)]
	\item\label{AO} $\boldA$ is a Carath\'eodory mapping, i.e., $\boldA(\cdot,\boldxi)$ is measurable for any $\boldxi\in \eR^{d\times N}$ and $\boldA(y,\cdot)$ is continuous for a.a. $y\in\Rd$,
	\item $\boldA$ is $Y-$periodic, i.e., periodic in each argument $y_i,i=1,\ldots,d$ with the period $1$,
	\item\label{ATh} There exists an ${\mathcal N}$--function $M\!:\Rd\times \eR^{d\times N}\rightarrow[0,\infty)$ and a constant $c>0$ such that for a.a. $y\in Y$ and all $\boldxi\in \eR^{d\times N}$ there holds\footnote{  Note that the condition could be formulated more generally, i.e., $\boldA(y,\boldxi)\cdot\boldxi\geq c(M(y,\boldxi)+M^*(y,\boldA(y,\boldxi)))-k(y)$ for some integrable function $k$.  For readability we omit this generality here setting $k\equiv 0$, however such case could easily be treated, see e.g.~\cite{GSG08}. }
	\begin{equation*}
		\boldA(y,\boldxi)\cdot\boldxi\geq c(M(y,\boldxi)+M^*(y,\boldA(y,\boldxi))),
	\end{equation*}
	\item\label{AF} For all $\boldxi,\boldeta\in\eR^{d\times N}$ such that $\boldxi\neq\boldeta$ and a.a. $y\in Y$, we have
	\begin{equation*}
		(\boldA(y,\boldxi)-\boldA(y,\boldeta))\cdot(\boldxi-\boldeta)> 0.
	\end{equation*}
\end{enumerate}

Before we introduce the assumption on the function $M$, we shall denote a particular  covering of $Y$ by $d$--dimensional cubes  $Q_j^\delta$. More precisely,  a family $\{Q_j^\delta\}_{j=1}^{N^\delta}$ consists of closed cubes of edge $2\delta$ such that $\intr Q_j^\delta\cap\intr Q_i^\delta=\emptyset$ for $i\neq j$ and $Y\subset \bigcup_{j=1}^{N^\delta} Q_j^\delta$. Moreover, for each cube $Q_j^\delta$ we define the cube $\tilde{Q}_j^\delta$ centered at the same point and with parallel corresponding edges of length $4\delta$.
Finally, we impose the following conditions on $M$:
\begin{enumerate}[label=(M\arabic*)]
	\item\label{MO} $M$ is and $\mathcal{N}$--function that is $Y-$periodic with respect to the first  variable,
		\item\label{MTwo} there exist ${\mathcal N}$--functions $m_1,m_2:[0,\infty)\rightarrow[0,\infty)$ such that for all $\boldxi\in \eR^{d\times N}$ and all $y\in Y$
	\begin{equation*}
			m_1(|\boldxi|)\leq M(y,\boldxi)\leq m_2(|\boldxi|),
	\end{equation*}
	\item \label{M4}
	 there exist constants $C, D, E>0$ and $G\geq 1$ such that for all $y\in Q_j^\delta$ and all $\boldxi\in\eR^{d\times N}$ we have
\begin{equation*}
	\frac{M(y,\boldxi)}{(M^\delta_j)^{**}(\boldxi)}\leq C\max\{|\boldxi|^{-\frac{D}{\log(E\delta)}},G^{-\frac{D}{\log(E\delta)}}\}
\end{equation*}
where $\delta<\delta_0$ ($\delta_0$ is chosen in such a way that $E\delta_0\leq\frac{1}{2}$),
\begin{equation}\label{MDeltajDef}
	M_j^\delta(\boldxi):=\inf_{y\in \tilde{Q}_j^\delta}M(y,\boldxi).
\end{equation}
and $(M^\delta_j)^{**}$ is the biconjugate of $M^\delta_j$.
\end{enumerate}

Having stated the assumptions, we introduce the main result of the paper.
\begin{Theorem}\label{Thm:Main}
Let $\boldA$ satisfy \ref{AO}--\ref{AF}, the ${\mathcal N}$--function $M$ satisfy \ref{MO}--\ref{M4},
\begin{equation}\label{Assumption:F}
\boldF\in L^{\infty}(\Omega; \eR^{d\times N})
\end{equation}
and for any $\varepsilon>0$ let $\ueps$  be a unique solution of the problem~\eqref{StudPr}. Then for an arbitrary sequence $\{\varepsilon_j\}_{j=1}^\infty$ such that $\varepsilon_j\rightarrow 0$ as $j\rightarrow\infty$, we have the following convergence result
\begin{equation*}
	\boldu^{\varepsilon_j}\rightharpoonup \boldu\text{ in }W^{1,1}_0(\Omega;\eR^N),
\end{equation*}
where $\boldu^{\varepsilon_j}$ is the sequence of solutions solving~\eqref{StudPr} with $\varepsilon=\varepsilon_j$ and  $\boldu$ is a unique solution to~\eqref{HPr}, provided that one of the following conditions holds:
\begin{enumerate}[label=(C\arabic*)]
\item\label{MTw} The set $\Omega$ is star--shaped.
\item\label{MTw2} The set $\Omega$ is Lipschitz and we have the single equation, i.e., $N=1$.
\item\label{EMD} The embedding $W^{1,m_1}(\Omega)\hookrightarrow L^{m_2}(\Omega)$ holds.
\end{enumerate}
\end{Theorem}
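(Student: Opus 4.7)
The plan is to follow a two-scale convergence strategy, and to identify the limit of the nonlinear flux via a Minty-type monotonicity argument in which the usual strong convergence of test functions is replaced by \emph{modular} convergence in the Musielak--Orlicz space $L^M$. First, testing \eqref{StudPr} by $\ueps$ and invoking \ref{ATh} together with \ref{MTwo} yields $\varepsilon$-independent bounds on the three modulars
$$
\int_\Omega M(x/\varepsilon,\nabla\ueps)\dx,\quad \int_\Omega M^*(x/\varepsilon,\boldA(x/\varepsilon,\nabla\ueps))\dx,\quad \int_\Omega m_1(|\nabla\ueps|)\dx.
$$
The de la Vall\'ee--Poussin criterion then gives weak $L^1$-compactness of $\{\nabla\ueps\}$, hence $\nabla\ueps\WCon\nabla\boldu$ in $L^1$, together with $\ueps\to\boldu$ strongly in $L^1$ by Rellich. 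A two-scale compactness adapted to the Musielak--Orlicz setting produces, along a further subsequence,
$$
\nabla\ueps \WTSSCon \nabla\boldu(x)+\nabla_y\boldw(x,y),\qquad \boldA(x/\varepsilon,\nabla\ueps)\WTSSCon \boldchi(x,y),
$$
with $\boldw(x,\cdot)$ $Y$-periodic of appropriate Musielak--Orlicz regularity and $\boldchi\in L^{M^*}(\Omega\times Y;\eR^{d\times N})$.

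Next, testing \eqref{StudPr} by $\boldv(x)+\varepsilon\bpsi(x,x/\varepsilon)$ with smooth $\boldv$ and $\bpsi$ ($Y$-periodic in the second variable) and letting $\varepsilon\to 0$ yields the two-scale identity
$$
\int_\Omega\int_Y \boldchi\cdot(\nabla\boldv+\nabla_y\bpsi)\dy\dx = \int_\Omega \boldF\cdot\nabla\boldv\dx,
$$
which decouples into $\div_y\boldchi(x,\cdot)=0$ in $Y$ for a.a.\ $x$ and $\div\int_Y \boldchi\dy=\div\boldF$ in $\Omega$. The whole theorem thus reduces to showing $\boldchi(x,y)=\boldA(y,\nabla\boldu(x)+\nabla_y\boldw(x,y))$: once this is established, $\boldw(x,\cdot)=\boldw_{\nabla\boldu(x)}$ solves the cell problem and $\int_Y\boldchi\dy=\hat\boldA(\nabla\boldu)$, which is precisely \eqref{HPr}.

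The identification is the heart of the proof and the main obstacle. For smooth $\boldv$ and corrector $\tbpsi$, set $\bphi^\varepsilon(x):=\nabla\boldv(x)+\nabla_y\tbpsi(x,x/\varepsilon)$. By \ref{AF},
$$
\int_\Omega \bigl(\boldA(x/\varepsilon,\nabla\ueps)-\boldA(x/\varepsilon,\bphi^\varepsilon)\bigr)\cdot(\nabla\ueps-\bphi^\varepsilon)\dx \geq 0.
$$
The limit is handled by combining (i) the energy identity $\int \boldA(x/\varepsilon,\nabla\ueps)\cdot\nabla\ueps\dx=-\int\boldF\cdot\nabla\ueps\dx\to -\int\boldF\cdot\nabla\boldu\dx$; (ii) two-scale convergence of $\boldA(x/\varepsilon,\nabla\ueps)$ and of $\nabla\ueps$ tested against the smooth oscillating $\bphi^\varepsilon$; and (iii) strong (modular) two-scale convergence of $\boldA(x/\varepsilon,\bphi^\varepsilon)$. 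Item (iii) is the genuinely new point: assumption \ref{M4}, which controls the ratio of $M(y,\cdot)$ to its cell-wise biconjugate $(M_j^\delta)^{**}$ by a log-H\"older factor, is what allows one to commute the oscillating evaluation of $M$ (and hence of $\boldA$) with the modular integral. Passing to the limit yields the two-scale Minty inequality
$$
\int_\Omega\int_Y\bigl(\boldchi-\boldA(y,\nabla\boldv+\nabla_y\tbpsi)\bigr)\cdot\bigl(\nabla\boldu+\nabla_y\boldw-\nabla\boldv-\nabla_y\tbpsi\bigr)\dy\dx \geq 0,
$$
and strict monotonicity \ref{AF} then gives the identification, provided that smooth pairs $(\boldv,\tbpsi)$ are dense in the modular topology in the natural space hosting $(\boldu,\boldw)$.

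This modular density of admissible test functions is exactly what the case distinction \ref{MTw}--\ref{EMD} is designed to secure: star-shapedness \ref{MTw} enables the scaling $\boldu\mapsto \boldu(\lambda\cdot)$ pushing $\supp\boldu$ strictly inside $\Omega$ before mollification; \ref{MTw2} exploits truncation techniques available for scalar equations on Lipschitz domains; and \ref{EMD} provides enough integrability from $W^{1,m_1}\hookrightarrow L^{m_2}$ to mollify directly in a setting where test functions are automatically controlled in the dominating $\mathcal{N}$-function $m_2$. Finally, strict monotonicity of $\hat\boldA$ (inherited from \ref{AF}) forces uniqueness of the solution to \eqref{HPr}, upgrading the subsequential convergence to convergence of the whole family $\varepsilon_j\to 0$.
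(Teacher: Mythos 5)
Your overall blueprint is essentially that of the paper: uniform modular estimates, two-scale compactness in the weak$^*$ sense, the two-scale limit equation decoupling into a cell constraint and the macroscopic balance, and a Minty-type monotonicity argument, with the case distinction \ref{MTw}--\ref{EMD} doing the work of supplying modular density of smooth test functions. There are, however, two points where your sketch glosses over the genuine obstacle and a third where the attribution is off.

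First, your identification step uses test functions of gradient form $\bphi^\varepsilon(x)=\nabla\boldv(x)+\nabla_y\tbpsi(x,x/\varepsilon)$ and concludes with the remark that the Minty inequality closes ``provided that smooth pairs $(\boldv,\tbpsi)$ are dense in the modular topology in the natural space hosting $(\boldu,\boldw)$.'' That density alone does not close the argument. Even if you approximate $\nabla\boldu+\nabla_y\boldw$ by $\nabla\boldv_n+\nabla_y\tbpsi_n$ in modular topology, the nonlinear term $\boldA(y,\nabla\boldv_n+\nabla_y\tbpsi_n)$ has no reason to converge to $\boldA(y,\nabla\boldu+\nabla_y\boldw)$; and the Browder--Minty device of writing $\boldv=\boldu-th$ and dividing by $t\to0_+$ requires control of $\boldA(y,\nabla\boldu+\nabla_y\boldw-t\nabla h)$ in $L^1$ uniformly in $t$, which is exactly what fails when $\Delta_2$ is absent. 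The paper avoids this by taking the competitor to be an arbitrary $\boldV\in C^\infty_c(\Omega;C^\infty_{per}(Y;\eR^{d\times N}))$ (not constrained to a gradient), upgrading to $\boldV\in L^\infty$ by modular approximation, and then performing a truncation localized on the sets $S_k=\{|\nabla\boldu+\boldU|\le k\}$: choosing $\boldV=(\nabla\boldu+\boldU)\chi_j+h\boldV\chi_i$ with $0<i<j$ and $h\in(0,1)$, sending $j\to\infty$, dividing by $h$, letting $h\to0_+$ via Vitali, and finally choosing a normalized direction to deduce $\boldA^0=\boldA(y,\nabla\boldu+\boldU)$ pointwise on each $S_i$. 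Without this truncation scheme the nonreflexive case does not close, and you should spell it out.

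Second, your claim that the log-H\"older condition \ref{M4} ``is what allows one to commute the oscillating evaluation of $M$ (and hence of $\boldA$) with the modular integral'' misplaces where \ref{M4} acts. For smooth, compactly supported $\boldV$, the strong two-scale convergence of $\boldV(x,x/\varepsilon)$ and of $\boldA(x/\varepsilon,\boldV(x,x/\varepsilon))$ follows from $L^\infty$-boundedness (Lemma~\ref{Lem:OperReg}) plus the mean-continuity statement Lemma~\ref{Lem:Facts2S}\ref{F2SFir}, and needs no log-H\"older hypothesis. The role of \ref{M4} is instead to make the modular density results of Lemma~\ref{Thm:ModDensity} true (mollification does not inflate the modular) and these density results are what enable (a) passing from $\int_\Omega\bar\boldA\cdot\nabla\bphi=\int_\Omega\boldF\cdot\nabla\bphi$ on $C^\infty_c$ to $\bphi=\boldu$, and (b) deducing $\boldA^0(x,\cdot)\in G(Y)^\bot$ and $\int_Y\boldA^0\cdot\boldU\dy=0$. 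Your reading of the case distinction \ref{MTw}--\ref{EMD} is otherwise correct: it is precisely there to support those density statements.

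Third, you do not explain why the limit $\boldu$ is the \emph{unique} solution to \eqref{HPr}; the strict monotonicity of $\hat\boldA$ is necessary but not sufficient on its own. One also needs to know that $\hat\boldA$ satisfies coercivity and growth compatible with a well-defined weak formulation, which is done in the paper by introducing the $\mathcal{N}$-functions $f$, $f^*$ in \eqref{DEF:f}, \eqref{FSt} and proving $\hat\boldA(\boldxi)\cdot\boldxi\geq c(f(\boldxi)+f^*(\hat\boldA(\boldxi)))$; you need the regularity improvements $\boldu\in V_0^f$ and $\bar\boldA\in L^{f^*}$ (obtained from two-scale weak$^*$ lower semicontinuity, Lemma~\ref{Lem:Facts2S}\ref{F2SSev}) before the duality pairing $\int_\Omega\bar\boldA\cdot\nabla\boldu$ even makes sense. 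Your energy-identity item (i) silently assumes this. Once these points are repaired, your argument and the paper's proof coincide in structure.
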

We would like to emphasize here, that this is the first result that does not require validity of neither $\Delta_2$ nor $\nabla_2$--condition and relies only on the assumption of log-H\"{o}lder continuity type.

It is also remarkable here, that we require a kind of implicit assumption \ref{M4}, which maybe very hard to check for functions $M$ with complicated structure. Moreover, following the log-H\"{o}lder continuity assumption in the variable exponent Lebesgue spaces, one would expect that the following condition could be sufficient:
\begin{enumerate}[label=(M\arabic*)] \setcounter{enumi}{3}
	\item\label{MTh}
	there exist constants $A>0$ and $B\geq 1$ such that for all $y_1,y_2\in Y$ with $|y_1-y_2|\leq \frac{1}{2}$ and all $\boldxi\in \eR^{d\times N}$ we have
	\begin{equation*}
		\frac{M(y_1,\boldxi)}{M(y_2,\boldxi)}\leq \max\{|\boldxi|^{-\frac{A}{\log|y_1-y_2|}},B^{-\frac{A}{\log|y_1-y_2|}}\}.
	\end{equation*}
\end{enumerate}
Clearly, \ref{M4} directly implies \ref{MTh}. However, it is not known  whether also the opposite implication holds true for general functions $M$. Nevertheless, for examples we have in mind, the assumptions \ref{M4} and \ref{MTh} are in fact equivalent and the results of Theorem~\ref{Thm:Main} are valid.

The first example is the radially symmetric function $M$, i.e.,
$$
 M(y,\boldxi)=\tilde M(y,|\boldxi|),
$$
where $\tilde M$ is an $\mathcal{N}$-function satisfying \ref{MTh}. Then, one can show that $M$ automatically satisfies also \ref{M4}. The detailed proof of this observation is provided in the Appendix~\ref{Ape1}, see Lemma~\ref{Lem:M2CondRelax}.

Next, using this result, we can even introduce a more general form of function $M$, namely
	\begin{equation*}
		M(y,\boldxi)=\sum_{i=1}^K k_i(y)M_i(\boldxi)+\tilde M(y,|\boldxi|)\text{ for some }K\in\eN
	\end{equation*}
	where $M_i$, $i=1,\ldots,K$, are spatially independent $\mathcal{N}$--functions and $k_i$ are nonnegative functions. In this case it is sufficient to assume that $\tilde M$ is continuous on $\eR^d\times[0,\infty)$ and satisfies \ref{MTh} while for  functions $k_i$,  we  assume that there exist constants $C_i>1$ such that
$$
\frac{k_i(y_1)}{k_i(y_2)}\leq C_i^{-\frac{1}{\log|y_1-y_2|}} \textrm{ for $i=1,\dots,K$ and any $y_1,y_2\in Y$ with $|y_1-y_2|\leq\frac{1}{2}$.}
$$
Then, keeping the notation from \ref{MTh} and considering an arbitrary $\delta<\delta_0\leq\frac{1}{6\sqrt{d}}$, we have
	\begin{equation*}
	\begin{split}
		M^\delta_j(\boldxi)&=\inf_{\tilde Q^\delta_j}\left(\sum_{i=1}^K k_i(y)M_i(\boldxi)+\tilde M(y,|\boldxi|)\right)\geq \sum_{i=1}^K\inf_{\tilde Q_j^\delta}k_i(y)M_i(\boldxi)+\inf_{\tilde Q_j^\delta}\tilde M(y,|\boldxi|)\\&\geq \sum_{i=1}^K\inf_{\tilde Q^\delta_j}k_i(y)M_i(\boldxi)+(\tilde M^\delta_j)^{**}(|\boldxi|)=: \bar M^\delta_j(\boldxi).
		\end{split}
	\end{equation*}
	Obviously, due to the continuity of functions $k_i$ and $\tilde{M}$ there are points $\bar{y}_i\in \tilde Q_j^\delta$ such that $\bar M^\delta_j(\boldxi)=\sum_{i=1}^K k_i(\bar{y_i})M_i(\boldxi)+(\tilde M^\delta_j)^{**}(|\boldxi|)$. Moreover, the function $\bar M^\delta_j$ is convex with respect to $\boldxi$. Hence we obtain $\bar M^j_\delta(\boldxi)\leq (M^\delta_j)^{**}(\boldxi)$ and since for any $y\in Q^j_\delta$ it follows that $|y-\bar{y}_i|\leq 3\delta\sqrt d$ for all $i=1,\ldots,K$, we get
	\begin{equation*}
		\begin{split}
		\frac{M(y,\boldxi)}{(M^j_\delta)^{**}(\boldxi)}&\leq \sum_{i=1}^K \frac{k_i(y)}{k_i(\bar{y}_i)}+\frac{\tilde M(y,|\boldxi|)}{(\tilde M^\delta_j)^{**}(|\boldxi|)}\leq K\max_{i=1,\ldots,K}\{C_i^{-\frac{1}{\log|y-\bar y_i|}}\}+C\max\{|\boldxi|^{-\frac{D}{\log(E\delta)}}, G^{-\frac{D}{\log(E\delta)}}\}\\
		&\leq \tilde{C}\max\{|\boldxi|^{-\frac{D}{\log(E\delta)}},\max\{\max_{i=1,\ldots,K}\{C_i\},G\}^{-\frac{\max\{D,1\}}{\log(\max\{3\sqrt{d},E\}\delta)}}\}
		\end{split}
	\end{equation*}
	for some constants $\tilde{C},D,E>0$ and $G\geq 1$ if $\delta<\delta_0\leq\min\{\frac{1}{6\sqrt{d}},\frac{1}{2E}\}$ is considered. Notice here, that for the estimate of the second part, we used again the result for radially symmetric functions stated in Lemma~\ref{Lem:M2CondRelax}.

To finish the introduction, we shorty describe here the structure of the paper. In Section~\ref{SS2}, we introduce the function spaces corresponding to our setting, recall several facts about two--scale convergence and most importantly, establish all important properties of the homogenized operator $\hat\boldA$. Then in Section~\ref{SS3} we provide a detailed proof of Theorem~\ref{Thm:Main}. Finally, in Appendix we collected several used tools and results.

\section{Preliminaries}\label{SS2}
Since we deal with rather general function spaces and  growth conditions imposed on the nonlinearity $\boldA$, we recall in  Appendix~\ref{Ape1} the definition of Musielak--Orlicz spaces and also their most important properties. More details about these spaces can be found in \cite{S05,SI69,GGG}. In the forthcoming section we focus only on the specific spaces related to the considered problem. Secondly, in Appendix~\ref{Ape2}, we recall certain technical tools used in the paper. Finally, in Appendix~\ref{Ape3} we recall the existence theorem for the elliptic problems with general growth conditions.

\subsection{Function spaces related to the problem}

In order just to avoid confusion, we remark here that the symbol $L^M$ stands for the Musielak--Orlicz space corresponding to an $\mathcal{N}$--function $M$, while the space $E^M$ denotes the closure of the bounded measurable functions in the topology of $L^M$.
Recall here, that we consider $\Omega\subset \eR^d$ a Lipschitz domain and $Y$ the set $(0,1)^d$. For the $\mathcal{N}$-function $M:Y\times \eR^{d\times N}\to \eR_+$  we use the subscript $y$ to underline  the role of $y$ for the spaces $L^{M_y}(\Omega\times Y;\eR^{d\times N})$ and similarly $E^{M_y}$ endowed with the norm
\begin{equation*}
	\|\boldv\|_{L^{M_y}}=\|\boldv\|_{E^{M_y}}:=\inf\left\{\lambda>0:\int_{\Omega}\int_Y M\left(y,\frac{\boldv(x,y)}{\lambda}\right)\dy \dx\leq 1\right\}.
\end{equation*}
We note that whenever a function dependent on a variable from $Y$ appears, it is always $Y-$periodic although the $Y-$periodicity might not be stressed. We further denote the spaces of smooth periodic or compactly supported functions as
$$
\begin{aligned}
C^\infty_{per}(Y;\eR^{N})&:=\{\boldv\in C^\infty(\Rd; \eR^{N}): \boldv\text{ is } Y\text{-periodic}\},\\
C^\infty_{c}(\Omega;\eR^{N})&:=\{\boldv\in C^\infty(\Rd; \eR^{N}): \textrm{supp }\boldv \textrm{ is compact in } \Omega\}
\end{aligned}
$$
and naturally also the corresponding Bochner spaces $C^\infty_c\left(\Omega;C^\infty_{per}(Y)\right)$. Then the standard Sobolev spaces are defined as
$$
W^{1,1}_0(\Omega;\eR^N):=\overline{\{\boldv\in C^{\infty}_c(\Omega;\eR^N)\}}^{\|\cdot \|_{1,1}}, \qquad W^{1,1}_{per}(Y;\eR^N):=\overline{\{\boldv\in C^\infty_{per}(Y;\eR^N); \, \int_{Y}\boldv =0\}}^{\|\cdot \|_{1,1}}.
$$
Moreover, due to the Poincar\'{e} inequality, we always choose an equivalent norm on $W^{1,1}_0$ and $W^{1,1}_{per}$ as $\|\boldv\|_{1,1}:=\|\nabla \boldv\|_1$. We shall define the Sobolev--Musielak--Orlicz space
\begin{equation*}
W^1_{per}E^M(Y; \eR^N):=\overline{\{\boldv\in C^\infty_{per}(Y;\eR^N); \, \int_{Y}\boldv =0\}}^{\|\cdot \|_{W^1_{per}L^M(Y;\eR^N)}}
\end{equation*}
where $\|\boldv\|_{W^{1}_{per}L^M(Y)}:=\|\nabla \boldv\|_{L^M(Y)}$ and the following  spaces
\begin{align*}
	V^M_0&:=\left\{\boldv\in W^{1,1}_0(\Omega;\eR^N): \nabla \boldv\in L^M(\Omega;\eR^{d\times N})\right\},\\
	V^M_{per}&:=\left\{\boldv\in W^{1,1}_{per}(Y;\eR^N): \nabla \boldv\in L^M(Y;\eR^{d\times N})\right\}.
\end{align*}
In addition, we utilize the following closed subspace of $E^M(Y;\eR^{d\times N})$ and its annihilator
\begin{equation*}
	\begin{split}
		G(Y)&:=\{\nabla \boldw: \boldw\in W_{per}^1E^M(Y;\eR^{N}) \},\\
		G^\bot(Y)&:=\{\boldW^*\in L_{per}^{M^*}(Y;\eR^{d\times N}):\int_Y\boldW^*(y)\cdot\boldW(y)\dy=0\text{ for all }\boldW\in G(Y)\}.
	\end{split}
\end{equation*}
In our situation, the ${\mathcal N}$--function possesses the property of log-H\"older continuity and the following theorem ensures the approximation of every function from $V^M_{per}$ and $V^M_0$ in the sense of modular topology by smooth functions that are periodic or compactly supported, respectively. Below, we use the notation $\ModConvM$ for modular convergence, see Appendix~\ref{Ape1}.

\begin{Lemma}\label{Thm:ModDensity}
	Let $\Sigma\subset\Rd$ be a bounded domain and an ${\mathcal N}$--function $M$ satisfy \ref{MTwo} and \ref{M4} with $\Sigma$ replacing $Y$.
Then we have the following modular convergence results:
	\begin{enumerate}[label=\arabic*)]
		\item\label{MDO} Let $\Sigma$ be Lipschitz. Then for  any scalar function $v\in V_0^M\cap L^\infty(\Sigma)$ there exists a sequence $\{v^k\}_{k=1}^\infty\subset C^\infty_c(\Sigma)$ such that $\nabla v^k\ModConvM\nabla v$.
		\item\label{MDTw} Let $\Sigma$ be star--shaped. Then for any function $\boldv\in V_0^M$ there exists a sequence $\{\boldv^k\}_{k=1}^\infty\subset C^\infty_c\left(\Sigma;\eR^N\right)$ such that $\nabla \boldv^k\ModConvM\nabla \boldv$.
		\item\label{MDTh} Let $\Sigma=Y$. Then for any function $\boldv\in V_{per}^M$ there exists a sequence $\{\boldv^k\}_{k=1}^\infty\subset C^\infty_{per}\left(\Sigma;\eR^N\right)$ such that $\nabla \boldv^k\ModConvM\nabla \boldv$.
\item\label{MDOMB} Let $\Sigma$ be Lipschitz and the embedding $W^{1,m_1}(\Sigma) \hookrightarrow L^{m_2}(\Sigma)$ hold. Then for  any  function $v\in V_0^M$ there exists a sequence $\{v^k\}_{k=1}^\infty\subset C^\infty_c(\Sigma)$ such that $\nabla v^k\ModConvM\nabla v$.
	\end{enumerate}
	\begin{proof}
		The assertion~\ref{MDO} is covered by \cite[Theorem 2.2]{GSZG17}. To prove the assertions \ref{MDTw}--\ref{MDOMB} one follows the common scheme:
		\begin{enumerate}
			\item Construction of the mollification $\nabla \boldv^{\delta_k}$ of $\nabla \boldv$.
			\item Showing that the family $\{\nabla \boldv^{\delta_k}\}_{k=1}^\infty$ is uniformly bounded in $L^M(\Sigma;\eR^{d\times N})$.
			\item Showing that $\nabla \boldv^{\delta_k}\ModConvM\nabla \boldv$.
		\end{enumerate}
		The detailed proof can be performed by repeating Steps 1-3 from the proof of \cite[Theorem 2.2]{GSZG17}.
	\end{proof}
\end{Lemma}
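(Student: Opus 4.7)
The plan is to take assertion \ref{MDO} as given from \cite[Theorem 2.2]{GSZG17} and then execute the three-step scheme (mollifier construction, uniform modular bound, modular convergence) in each of the three remaining settings, with geometry- or embedding-specific modifications at the first and second steps while reusing the reference's mechanism in the third.

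For assertion \ref{MDTh} on $Y$, I would construct $\boldv^{\delta_k} := \rho_{\delta_k} * \boldv$ with a standard symmetric mollifier extended periodically; this produces $C^\infty_{per}$ approximations of zero mean automatically, so no boundary flattening or truncation is needed and step~1 collapses to a one-line definition. Step~2 follows from Jensen's inequality applied on the covering cubes $\tilde Q_j^\delta$ of \ref{M4}: choosing the scale $\delta$ of the cover to be balanced against the mollification radius $\delta_k$, one bounds $\int_Y M(y,\nabla\boldv^{\delta_k})\dy$ by $\sum_j \int_{Q_j^\delta} (M_j^\delta)^{**}(\nabla\boldv^{\delta_k}) \dy$ times the factor from \ref{M4}, and the convexity inequality $(M_j^\delta)^{**}(\rho_{\delta_k}*\nabla\boldv)\le \rho_{\delta_k}*(M_j^\delta)^{**}(\nabla\boldv)$ closes the estimate.

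For assertion \ref{MDTw}, star-shapedness of $\Sigma$ (say with respect to $x_0$) lets me insert a dilation: define $\boldv_\lambda(x) := \boldv(x_0 + \lambda^{-1}(x-x_0))$ for $\lambda$ slightly larger than $1$, extended by zero outside $\lambda(\Sigma - x_0)+x_0 \Subset \Sigma$, and then mollify to get $\boldv_\lambda^{\delta_k} := \rho_{\delta_k}*\boldv_\lambda \in C_c^\infty(\Sigma;\eR^N)$ once $\delta_k$ is chosen smaller than the distance from $\mathrm{supp}\,\boldv_\lambda$ to $\partial\Sigma$. This replaces the boundary-flattening of the scalar proof (which is unavailable for $N>1$ in the absence of $\Delta_2$) and the support separation makes step~3 routine. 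For assertion \ref{MDOMB}, I would instead combine the partition-of-unity and boundary-flattening construction of \cite{GSZG17} verbatim, but substitute the $L^\infty$ truncation by the embedding $W^{1,m_1}(\Sigma)\hookrightarrow L^{m_2}(\Sigma)$, which supplies an a priori bound on $\int_\Sigma m_2(|\nabla v^{\delta_k}|)$ on the straightened boundary strips and so controls the terms that the $L^\infty$ assumption previously controlled.

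The hard part across all three is step~3: because $M$ satisfies neither $\Delta_2$ nor $\nabla_2$, modular convergence does not follow automatically from strong $L^1$ convergence of $\nabla \boldv^{\delta_k}$ together with the modular bound, and one must actually show $\int_\Sigma M(y,\lambda(\nabla\boldv^{\delta_k}-\nabla\boldv))\dy \to 0$ for some $\lambda>0$. The argument is quantitative: on each $\tilde Q_j^\delta$, \ref{M4} converts $M(y,\cdot)$ into the convex minorant $(M_j^\delta)^{**}(\cdot)$ at a price that is a power of $|\boldxi|^{-D/\log(E\delta)}$, which is calibrated precisely so that this price is absorbed by the Jensen inequality once $\delta_k$ is tied to $\delta$. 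Getting the two scales to cooperate correctly, and choosing $\lambda$ small enough to exploit the gap between $M$ and $(M_j^\delta)^{**}$, is the essential obstacle and is exactly the role played by \ref{M4} in place of the classical $\Delta_2$ hypothesis.
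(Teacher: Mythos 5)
Your proposal follows exactly the three-step scheme the paper invokes (mollification, uniform modular bound via \ref{M4}, modular convergence via uniform integrability), and the way you deploy \ref{M4} through Jensen's inequality on the biconjugate $(M^\delta_j)^{**}$ with the mollification radius calibrated against the covering scale $\delta$ is indeed the mechanism that replaces $\Delta_2$ here. Two technical slips should be flagged, though.

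First, in case~\ref{MDTw} your dilation goes the wrong way. With $\lambda>1$, the map $x\mapsto x_0+\lambda^{-1}(x-x_0)$ pulls the argument toward $x_0$, so the natural domain of $\boldv_\lambda$ is the \emph{enlarged} set $\lambda(\Sigma-x_0)+x_0 \supset \Sigma$, which is not compactly contained in $\Sigma$; your assertion that it is $\Subset\Sigma$ is false as written. The correct contraction is $\boldv_\lambda(x):=\boldv(x_0+\lambda(x-x_0))$, whose support $\lambda^{-1}(\operatorname{supp}\boldv - x_0)+x_0$ does lie compactly in $\Sigma$ by star-shapedness, so that the subsequent mollification with small enough radius produces $C^\infty_c(\Sigma;\eR^N)$ functions. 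You should also note that the dilation shifts the spatial argument of $M$, so the step~2 bound for $\boldv_\lambda$ itself already requires \ref{M4} (or \ref{MTh}) to compare $M(x_0+\lambda(x-x_0),\cdot)$ with $M(x,\cdot)$; this is implicitly part of what makes the scheme work, but is worth saying.

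Second, in case~\ref{MDOMB} you write that the embedding supplies a bound on $\int_\Sigma m_2(|\nabla v^{\delta_k}|)$. The embedding $W^{1,m_1}(\Sigma)\hookrightarrow L^{m_2}(\Sigma)$ controls the \emph{function} $v$ in $L^{m_2}$, not its gradient: from $\nabla v\in L^M\subset L^{m_1}$ one gets $v\in L^{m_2}$, hence $v\in L^M$ by the right-hand side of \ref{MTwo}. That is precisely what the $L^\infty$ hypothesis bought in case~\ref{MDO}: control on the terms $v\nabla\varphi$ arising from cut-offs and on translation errors near the boundary, which involve $v$ itself rather than $\nabla v$. The embedding hypothesis substitutes for $L^\infty$ in this role, but your phrasing misidentifies which quantity is being controlled. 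With these two corrections your account matches the paper's (terse) proof, which simply references the scheme of Steps 1--3 of the cited theorem.
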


We state several technical lemmas.
\begin{Lemma}{\cite[Lemma 2.1.]{GSG08}}\label{Lem:MConvEquiv}
	Let $N\geq 1$, $M$ be an $\mathcal{N}$--function and $\{\boldv^k\}_{k=1}^\infty$ be a sequence of measurable $\eR^N-$valued functions on $\Sigma$. Then $\boldv^k\ModConvM\boldv$ in $L^M(\Sigma;\eR^N)$ if and only if $\boldv^k\rightarrow\boldv$ in measure and there exists some $\lambda>0$ such that $\{M(\cdot,\lambda \boldv^k)\}_{k=1}^\infty$ is uniformly integrable, i.e.,
	\begin{equation*}
		\lim_{R\rightarrow\infty}\left(\sup_{k\in\eN}\int_{\{x:|M(x,\lambda\boldv^k(x))|>R\}}M(x,\lambda\boldv^k(x))\dx\right)=0.
	\end{equation*}
\end{Lemma}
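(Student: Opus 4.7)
The plan is to treat the two implications of the equivalence separately, with convexity of $M(x,\cdot)$ and Vitali's convergence theorem providing the common backbone. For the forward implication, starting from the assumption that $\int_\Sigma M(x,\lambda_0(\boldv^k-\boldv))\dx\to 0$ for some $\lambda_0>0$, I would first extract convergence in measure via a Chebyshev type estimate: for any fixed $\varepsilon>0$ the coercivity built into the $\mathcal{N}$-function structure yields $c(\varepsilon):=\inf_{x\in\Sigma,\,|\boldxi|\geq\varepsilon} M(x,\lambda_0\boldxi)>0$ (in the setting of the paper this is immediate from the minorant $m_1$ in~\ref{MTwo}), so that
\begin{equation*}
c(\varepsilon)\,|\{x\in\Sigma:|\boldv^k(x)-\boldv(x)|>\varepsilon\}|\leq \int_\Sigma M(x,\lambda_0(\boldv^k-\boldv))\dx\longrightarrow 0.
\end{equation*}
For uniform integrability of $\{M(\cdot,\tfrac{\lambda_0}{2}\boldv^k)\}$, the idea is to exploit convexity with the choice $\lambda=\lambda_0/2$:
\begin{equation*}
M(x,\tfrac{\lambda_0}{2}\boldv^k)\leq \tfrac{1}{2}M(x,\lambda_0(\boldv^k-\boldv))+\tfrac{1}{2}M(x,\lambda_0\boldv).
\end{equation*}
For any measurable $A\subset\Sigma$ the first term, integrated over $A$, is bounded by $\int_\Sigma M(x,\lambda_0(\boldv^k-\boldv))\dx$, which tends to $0$ uniformly in $A$; the second is a single $L^1$ function and is therefore absolutely continuous. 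Handling the finitely many initial indices separately produces the required uniform integrability.

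For the backward implication, suppose $\boldv^k\to\boldv$ in measure and $\{M(\cdot,\lambda\boldv^k)\}$ is uniformly integrable for some $\lambda>0$. Then $\boldv^k-\boldv\to 0$ in measure, and using $M(x,0)=0$ together with the continuity of $M(x,\cdot)$ (controlled from above by $m_2$ in the paper's setting) one obtains that $M(\cdot,\tfrac{\lambda}{2}(\boldv^k-\boldv))\to 0$ in measure as well. Convexity again supplies
\begin{equation*}
M(x,\tfrac{\lambda}{2}(\boldv^k-\boldv))\leq \tfrac{1}{2}M(x,\lambda\boldv^k)+\tfrac{1}{2}M(x,\lambda\boldv),
\end{equation*}
so the sequence on the left inherits uniform integrability from the assumed one plus the single $L^1$ function $M(\cdot,\lambda\boldv)$. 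Vitali's convergence theorem then yields $\int_\Sigma M(x,\tfrac{\lambda}{2}(\boldv^k-\boldv))\dx\to 0$, which is the claimed modular convergence with parameter $\lambda/2$.

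The one delicate point is the passage from modular convergence to convergence in measure in the forward direction: for a genuinely anisotropic and spatially inhomogeneous $\mathcal{N}$-function the uniform lower bound $\inf_{x,|\boldxi|\geq\varepsilon} M(x,\lambda_0\boldxi)>0$ is not automatic from the bare definition and must be supplied either by the explicit minorant in~\ref{MTwo} or, in a purely abstract $\mathcal{N}$-function framework, by the classical fact that the Luxemburg norm dominates a suitable truncated $L^1$-quantity and hence controls the measures of superlevel sets. Everything else reduces to a direct combination of convexity, absolute continuity of the integral, and Vitali's theorem, so this is the only place where the structural properties of $M$ really enter.
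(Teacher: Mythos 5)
The paper does not prove Lemma~\ref{Lem:MConvEquiv}; it imports it verbatim from \cite[Lemma 2.1]{GSG08}, so there is no in-text argument to compare against. Your sketch follows the standard route for this type of equivalence --- a Chebyshev/coercivity argument for convergence in measure, convexity to couple the modular of $\boldv^k-\boldv$ with that of $\boldv^k$, and Vitali's theorem to close both directions --- and the skeleton is sound.

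Two refinements are worth making. First, in the forward direction you rightly flag that the uniform lower bound $\inf_{x,\,|\boldxi|\geq\varepsilon}M(x,\lambda_0\boldxi)>0$ is not part of the abstract $\mathcal{N}$-function definition. It does follow from~\ref{MTwo}, but you do not actually need it: on a bounded domain $\Sigma$ the function $g_\varepsilon(x):=\inf_{|\omega|=1}M(x,\lambda_0\varepsilon\omega)$ is measurable (take the infimum over a countable dense set of directions and use that $M$ is Carath\'eodory) and strictly positive a.e., so $|\{x:g_\varepsilon(x)<t\}|\to 0$ as $t\to 0^+$. Choosing $t_0$ with $|\{g_\varepsilon<t_0\}|<\alpha/2$ shows that any measurable $E\subset\Sigma$ with $|E|\geq\alpha$ carries $\int_E g_\varepsilon\dx\geq t_0\alpha/2>0$. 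Since the sets $E_k=\{|\boldv^k-\boldv|>\varepsilon\}$ satisfy $\int_{E_k}g_\varepsilon\dx\leq\int_\Sigma M(x,\lambda_0(\boldv^k-\boldv))\dx\to0$, their measures cannot stay bounded away from zero. This replaces the ``classical fact'' you gesture at with a concrete finite-measure argument and keeps the lemma in the generality actually stated.

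Second, the backward direction contains the symmetric soft spot, which you pass over too quickly: concluding that $M(\cdot,\tfrac{\lambda}{2}(\boldv^k-\boldv))\to0$ in measure from $\boldv^k-\boldv\to0$ in measure is not immediate for a spatially inhomogeneous $M$, because $M(x,\cdot)\to0$ near the origin only pointwise in $x$, not uniformly. With~\ref{MTwo} the majorant $m_2$ settles it, as you note; without~\ref{MTwo} the same finite-measure trick applies to $h_\varepsilon(x):=\sup_{|\boldxi|\leq\lambda\varepsilon/2}M(x,\boldxi)$, using $h_\varepsilon(x)\to0$ as $\varepsilon\to0$ for a.e.\ $x$. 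Finally, in the forward direction you tacitly use $\int_\Sigma M(x,\lambda_0\boldv)\dx<\infty$; this requires shrinking $\lambda_0$ if necessary (permissible by convexity) to a value compatible with $\boldv\in L^M(\Sigma)$ --- a routine reduction, but one that should be stated.
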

\begin{Lemma}\cite[Lemma 2.2.]{GSG08}\label{Lem:UnifIntegr}
Let $M$ be an ${\mathcal N}$--function and assume that there is $c>0$ such that $\int_\Sigma M(x,\boldv^k)\dx\leq c$ for all $k\in\eN$. Then $\{\boldv^k\}_{k=1}^\infty$ is uniformly integrable.
\end{Lemma}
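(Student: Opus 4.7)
The plan is to recover uniform integrability of $\{\boldv^k\}$ directly from the super-linear growth built into the definition of an $\mathcal{N}$-function, applied to the hypothesis that $\int_\Sigma M(x,\boldv^k)\dx\le c$ for every $k$. Recall that $M$ being an $\mathcal{N}$-function means in particular that
\[
\psi(R):=\essinf_{x\in\Sigma}\inf_{|\boldxi|\ge R}\frac{M(x,\boldxi)}{|\boldxi|}\ \longrightarrow\ \infty\quad\text{as }R\to\infty,
\]
so for $|\boldxi|\ge R$ we get the pointwise bound $|\boldxi|\le M(x,\boldxi)/\psi(R)$. This is the one structural fact I would lean on; everything else is bookkeeping.

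First I would show $L^1$-boundedness. Split $\Sigma=\{|\boldv^k|\le 1\}\cup\{|\boldv^k|>1\}$ and estimate
\[
\int_\Sigma|\boldv^k|\dx\le |\Sigma|+\frac{1}{\psi(1)}\int_{\{|\boldv^k|>1\}}M(x,\boldv^k)\dx\le |\Sigma|+\frac{c}{\psi(1)},
\]
which is independent of $k$. Next, to prove uniform integrability I would work with the Vitali-type characterization: given $\varepsilon>0$, I choose $R$ so large that $c/\psi(R)<\varepsilon/2$, and then pick $\eta:=\varepsilon/(2R)$. For any measurable $A\subset\Sigma$ with $|A|<\eta$,
\[
\int_A|\boldv^k|\dx=\int_{A\cap\{|\boldv^k|\le R\}}|\boldv^k|\dx+\int_{A\cap\{|\boldv^k|>R\}}|\boldv^k|\dx\le R|A|+\frac{1}{\psi(R)}\int_\Sigma M(x,\boldv^k)\dx<\frac{\varepsilon}{2}+\frac{c}{\psi(R)}<\varepsilon,
\]
uniformly in $k$. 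Equivalently, one sees at once that $\sup_k\int_{\{|\boldv^k|>R\}}|\boldv^k|\dx\le c/\psi(R)\to 0$ as $R\to\infty$, which is the standard formulation of uniform integrability.

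The only delicate point is the uniformity in $x$ of the super-linearity of $M$. If one takes $M$ to be an $\mathcal{N}$-function in the sense that $M(x,\boldxi)/|\boldxi|\to\infty$ merely pointwise in $x$, then the threshold $\psi(R)$ could in principle fail to be finite; however, in the Musielak--Orlicz framework adopted here (and in particular under \ref{MTwo}, which would give $M(x,\boldxi)\ge m_1(|\boldxi|)$ and hence $\psi(R)\ge m_1(R)/R\to\infty$) this uniformity is automatic. I therefore expect the main obstacle to be purely a definitional one: confirming that the notion of $\mathcal{N}$-function used in \cite{GSG08} already builds in the uniform super-linear growth one needs. Once that is settled, the argument is the textbook de~la~Vallée--Poussin proof applied to the $x$-dependent test function $M$.
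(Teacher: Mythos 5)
The paper does not supply its own proof of this lemma; it simply cites \cite[Lemma~2.2]{GSG08}, so there is nothing internal to compare against. Your argument is the standard de~la~Vall\'ee--Poussin reasoning and, on the whole, it is correct. The decomposition of $\Sigma$ into $\{|\boldv^k|\le R\}$ and $\{|\boldv^k|>R\}$, the pointwise bound $|\boldxi|\le M(x,\boldxi)/\psi(R)$ for $|\boldxi|\ge R$, and the final two-line chain of estimates are all sound, and the observation that $\psi(1)>0$ (so the $L^1$-bound is meaningful) is automatic once $\psi$ is bounded below by $m_1(R)/R$.

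The caveat you raise at the end is the real content and you are right to insist on it: with the definition of $\mathcal{N}$-function given in Appendix~\ref{Ape1} --- where $\lim_{|\boldxi|\to\infty}M(x,\boldxi)/|\boldxi|=\infty$ is only required \emph{for almost every fixed} $x$ --- the statement is actually false as written. A concrete counterexample on $\Sigma=(0,1)$ is $M(x,\xi)=|\xi|^{1+x}$ with $\boldv^k=k\,\chi_{(0,1/k)}$: one computes
\begin{equation*}
\int_0^1 M(x,\boldv^k(x))\dx=\int_0^{1/k}k^{1+x}\dx=\frac{k\bigl(k^{1/k}-1\bigr)}{\log k}\longrightarrow 1,
\end{equation*}
so the modular integrals are uniformly bounded, yet each $\boldv^k$ carries unit $L^1$-mass on a set of measure $1/k$, so $\{\boldv^k\}$ is not uniformly integrable. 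Hence your quantity $\psi(R)$ need not tend to infinity, and indeed in this example $\psi(R)=R^{0}=1$ for all $R$. What rescues the lemma is exactly what you say: in this paper the lemma is invoked only in contexts where \ref{MTwo} is in force, which gives the $x$-independent minorant $M(x,\boldxi)\ge m_1(|\boldxi|)$ and hence $\psi(R)\ge m_1(R)/R\to\infty$. (Equivalently, $m_1$ itself serves as the de~la~Vall\'ee--Poussin weight, so one can bypass $\psi$ entirely and write $\int_A|\boldv^k|\le R|A|+\tfrac{R}{m_1(R)}\int_\Sigma m_1(|\boldv^k|)\le R|A|+\tfrac{cR}{m_1(R)}$.) So the proof is correct under \ref{MTwo}; it would be cleaner to state that hypothesis explicitly rather than leave it implicit in the definition of $\mathcal{N}$-function borrowed from \cite{GSG08}.
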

\begin{Lemma}\label{Lem:ModConvGrTrunc}
	Let $M$ be an ${\mathcal N}$--function and $\Sigma$ be a bounded domain. Then for any  $v\in V_0^M(\Sigma)$ we have $\nabla T_k(v)\ModConvM\nabla v$ as $k\rightarrow\infty$, where
	\begin{equation*}
		T_k (v)=\begin{cases}
			v &\text{ if }|v|\leq k\\
			k\frac{v}{|v|} &\text{ if }|v|> k.
		\end{cases}
	\end{equation*}
	\begin{proof}
	Clearly, $\nabla T_k(u)\rightarrow \nabla u$ a.e. in $\Sigma$, which has finite measure. Hence the sequence $\{\nabla T_k(u)\}_{k=1}^\infty$ converges to $\nabla u$ in measure. Moreover, as $M(\cdot,\nabla T_k(u))\leq M(\cdot,\nabla u)$ a.e. in $\Sigma$ by the definition of $T_k$, Lemma~\ref{Lem:UnifIntegr} implies that $\{\nabla T_k(u)\}_{k=1}^\infty$ is uniformly integrable. These two facts are equivalent to $\nabla T_k(u)\ModConvM\nabla u$ according to Lemma~\ref{Lem:MConvEquiv}.
	\end{proof}
\end{Lemma}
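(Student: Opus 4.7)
The plan is to verify the two ingredients needed to invoke the characterisation of modular convergence in Lemma~\ref{Lem:MConvEquiv}: convergence in measure, and uniform integrability of a family $\{M(\cdot,\lambda\nabla T_k(v))\}_{k\in\eN}$ for some $\lambda>0$. Once both are in hand, modular convergence follows immediately.

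First I would establish the pointwise a.e.\ convergence $\nabla T_k(v)\to\nabla v$ on $\Sigma$. Since $v$ is finite almost everywhere, for a.e.\ $x\in\Sigma$ there exists $k_0(x)$ with $|v(x)|<k$ for every $k\geq k_0(x)$. On the set $\{|v|<k\}$ the truncation $T_k$ acts as the identity, so by the standard chain rule for Sobolev functions $\nabla T_k(v)(x)=\nabla v(x)$ for all $k\geq k_0(x)$, which yields the desired pointwise a.e.\ convergence (in the scalar case, on the complement $\{|v|\geq k\}$ the gradient $\nabla T_k(v)$ vanishes a.e.\ by Stampacchia's theorem on level sets). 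Because $\Sigma$ has finite Lebesgue measure, pointwise a.e.\ convergence upgrades at once to convergence in measure on $\Sigma$.

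Next I would choose $\lambda=1$ and argue uniform integrability. The structural observation that, a.e.\ in $\Sigma$, $\nabla T_k(v)$ takes values in $\{\nabla v,0\}$, combined with $M(x,0)=0$ and $M\geq 0$, gives the pointwise domination
\begin{equation*}
M(x,\nabla T_k(v)(x))\leq M(x,\nabla v(x))\qquad\text{a.e.\ in }\Sigma.
\end{equation*}
Since $v\in V_0^M(\Sigma)$ the right-hand side is integrable, so we obtain the uniform bound $\int_\Sigma M(x,\nabla T_k(v))\dx\leq\int_\Sigma M(x,\nabla v)\dx<\infty$, with bound independent of $k$. Lemma~\ref{Lem:UnifIntegr} then delivers uniform integrability of the family $\{\nabla T_k(v)\}_{k\in\eN}$ in the sense required by Lemma~\ref{Lem:MConvEquiv}.

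Combining the two preceding paragraphs, Lemma~\ref{Lem:MConvEquiv} produces $\nabla T_k(v)\ModConvM\nabla v$, which is the claim. There is essentially no obstacle: the entire proof is a bookkeeping application of the auxiliary Lemmas~\ref{Lem:MConvEquiv} and \ref{Lem:UnifIntegr}. The only point that deserves care is the pointwise dichotomy $\nabla T_k(v)\in\{\nabla v,0\}$ underlying the domination of $M$, which in the scalar case is the classical Stampacchia identity and is all that is needed to complete the argument.
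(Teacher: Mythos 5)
Your proof is correct and follows essentially the same route as the paper: convergence in measure on the finite-measure domain plus the domination $M(\cdot,\nabla T_k(v))\leq M(\cdot,\nabla v)$, fed into Lemma~\ref{Lem:UnifIntegr} and then Lemma~\ref{Lem:MConvEquiv}. You are slightly more explicit than the paper in invoking the Stampacchia identity $\nabla T_k(v)=\nabla v\,\chi_{\{|v|\leq k\}}$ to justify both the pointwise convergence and the domination, which is a helpful elaboration but not a different method.
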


\subsection{Standard tools used  for homogenization}

This section is devoted to the introduction of the two--scale convergence via periodic unfolding. This approach allows to represent the weak two--scale convergence by means of the standard weak convergence in a Lebesgue space on the product $\Omega\times Y$, details for the case of $L^p$ spaces can be found in \cite{V06}. In the same manner the strong two--scale convergence is introduced. Since function spaces, which we are working with, provide only the weak$^*$ compactness of bounded sets, we introduce the two--scale compactness in the weak$^*$ sense. However, it turns out that this notion of convergence and some of its properties are sufficient for our purposes. We define functions $n:\eR\rightarrow \eZ$ and $N:\Rd\rightarrow \eZ^d$ as
	\begin{equation*}
			n(t)=\max\{n\in \eZ: n\leq t\}\ \forall t\in\eR,	N(x)=(n(x_1),\ldots, n(x_d))\ \forall x\in\Rd.
	\end{equation*}
	
	Then we have for any $x\in \Rd,\varepsilon>0$, a two--scale decomposition $x=\varepsilon\left(N\left(\frac{x}{\varepsilon}\right)+R\left(\frac{x}{\varepsilon}\right)\right)$. We also define for any $\varepsilon>0$ a two--scale composition function $S_\varepsilon:\Rd\times Y\rightarrow\Rd$ as $S_\varepsilon(x,y):=\varepsilon \left(N\left(\frac{x}{\varepsilon}\right)+y\right)$.
	It follows immediately that
	\begin{equation}\label{TSCompUnifConv}
		S_\varepsilon(x,y)\rightarrow x\text{ uniformly in }\Rd\times Y\text{ as }\varepsilon\rightarrow 0
	\end{equation}
	since $S_\varepsilon(x,y)=x+\varepsilon\left(y-R\left(\frac{x}{\varepsilon}\right)\right)$. In the rest of the section we assume that $m:[0,\infty) \to [0,\infty)$ is an ${\mathcal N}$--function.\\
We say that a sequence of functions $\{v^\varepsilon\}\subset L^m(\Rd)$
\begin{enumerate}
	\item converges to $v^0$ weakly$^*$ two--scale in $L^m(\Rd\times Y)$, $v^\varepsilon\WTSSCon v^0$, if $v^\varepsilon\circ S_\varepsilon$ converges to $v^0$ weakly$^*$ in $L^m(\Rd\times Y)$,
	\item converges to $v^0$ strongly two--scale in $E^m(\Rd\times Y)$, $v^\varepsilon\STSCon v^0$, if $v^\varepsilon\circ S_\varepsilon$ converges to $v^0$ strongly in $E^m(\Rd\times Y)$.
\end{enumerate}
We define two--scale convergence in $L^m(\Omega\times Y)$ as two--scale convergence in $L^m(\Rd\times Y)$ for functions extended by zero to $\Rd\setminus\Omega$. The following lemma will be utilized to express properties of two--scale convergence in terms of single-scale convergence.
\begin{Lemma}\cite[Lemma 1.1]{V06}\label{Lem:Decomp} Let $g$ be measurable with respect to a $\sigma-$algebra generated by the product of the $\sigma-$algebra of all Lebesgue--measurable subsets of $\eR^d$ and the $\sigma-$algebra of all Borel--measurable subsets of $Y$. Assume in addition that $g\in L^1(\eR^d;L^\infty_{per}(Y))$ and extend it by $Y-$periodicity to $\eR^d$ for a.a. $x\in\eR^d$. Then, for any $\varepsilon>0$, the function $(x,y)\mapsto g(S_\varepsilon(x,y),y)$ is integrable and
	\begin{equation*}
		\int_{\Rd}g\left(x,\frac{x}{\varepsilon}\right)\dx=\int_{\Rd}\int_Y g(S_\varepsilon(x,y),y)\dy\dx.
	\end{equation*}
\end{Lemma}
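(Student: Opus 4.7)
The plan is to exploit the piecewise-constant nature of $N(x/\varepsilon)$ on the lattice cubes $\varepsilon(k+Y)$, $k\in\eZ^d$, which tile $\Rd$ up to a null set. On any such cube we have $N(x/\varepsilon)=k$, so $S_\varepsilon(x,y)=\varepsilon(k+y)$ is independent of $x$, while $x/\varepsilon = k + R(x/\varepsilon)\in k+Y$. Both sides of the claimed identity will therefore split into identical sums indexed by $k$, and the whole proof reduces to matching them cube by cube.

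As a preliminary step I would verify that $(x,y)\mapsto g(S_\varepsilon(x,y),y)$ is measurable: the map $S_\varepsilon$ is Borel (piecewise constant in $x$ on the product decomposition $\varepsilon(k+Y)\times Y$), and since $g$ is measurable with respect to the product $\sigma$-algebra $\mathcal{L}(\Rd)\otimes \mathcal{B}(Y)$, the composition is measurable. The hypothesis $g\in L^1(\Rd;L^\infty_{per}(Y))$ then provides the dominating function $x\mapsto \|g(x,\cdot)\|_{L^\infty(Y)}\in L^1(\Rd)$, so Fubini applies and every integral in sight is absolutely convergent.

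The computation itself proceeds term by term. For the left-hand side, on each cube $\varepsilon(k+Y)$ I substitute $x=\varepsilon(k+z)$ with $z\in Y$; then $x/\varepsilon=k+z$, and the $Y$-periodicity of $g$ in the second variable yields $g(x,x/\varepsilon)=g(\varepsilon(k+z),z)$, hence
$$\int_{\Rd}g(x,x/\varepsilon)\dx=\sum_{k\in\eZ^d}\varepsilon^d\int_Y g(\varepsilon(k+z),z)\dz.$$
For the right-hand side, on the same cube $g(S_\varepsilon(x,y),y)=g(\varepsilon(k+y),y)$ is independent of $x$, so
$$\int_{\varepsilon(k+Y)}\int_Y g(S_\varepsilon(x,y),y)\dy\dx=\varepsilon^d\int_Y g(\varepsilon(k+y),y)\dy,$$
and summing over $k\in\eZ^d$ reproduces the same expression.

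I expect the only genuinely delicate point to be the measurability/Fubini justification, especially the fact that $g\in L^1(\Rd;L^\infty_{per}(Y))$ (rather than jointly $L^1$) is exactly what is needed to dominate the integrand uniformly in the second variable; the algebraic identity itself is automatic once the definition of $S_\varepsilon$ and $Y$-periodicity are unpacked.
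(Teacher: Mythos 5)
The paper itself gives no proof of this lemma; it is quoted verbatim from Visintin \cite[Lemma 1.1]{V06}, so there is no ``paper's proof'' to compare against. Your argument is correct and is the standard one: tile $\Rd$ by the cells $\varepsilon(k+Y)$, $k\in\eZ^d$, note that $N(x/\varepsilon)\equiv k$ there so that $S_\varepsilon(x,y)=\varepsilon(k+y)$ is independent of $x$, and match the two sides cell by cell --- on $\varepsilon(k+Y)$ the left side becomes $\varepsilon^d\int_Y g(\varepsilon(k+z),z)\dz$ after the substitution $x=\varepsilon(k+z)$ and $Y$-periodicity, while the right side is $\varepsilon^d\int_Y g(\varepsilon(k+y),y)\dy$ because the inner integrand is constant in $x$. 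Your preliminary measurability/Fubini discussion is also apt: the map $(x,y)\mapsto(S_\varepsilon(x,y),y)$ is piecewise constant in $x$ and affine in $y$, hence pulls the $\mathcal{L}(\Rd)\otimes\mathcal{B}(Y)$-measurability of $g$ back to joint measurability, and $x\mapsto\|g(x,\cdot)\|_{L^\infty(Y)}\in L^1(\Rd)$ is precisely the dominant needed to make both sides absolutely convergent. Nothing is missing.
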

Several useful properties of the two--scale convergence are summarized in the following lemma.
\begin{Lemma}\label{Lem:Facts2S}
Assume that $m:[0,\infty)\rightarrow[0,\infty)$ is an ${\mathcal N}$--function.
\begin{enumerate}[label=(\roman*)]
	\item\label{F2SFir} Let $v:\Omega\times Y\rightarrow\eR$ be  Carath\'eodory, $v\in E^m(\Omega\times Y)$, $v$ be $Y-$periodic, define $v^\varepsilon(x)=v(x,\frac x\varepsilon)$ for $x\in\Omega$. Then $v^\varepsilon\STSCon v$ in $E^m(\Omega\times Y)$ as $\varepsilon\rightarrow 0$.
	\item\label{F2SS} Let $v^\varepsilon\WTSSCon v^0$ in $L^m(\Omega\times Y)$ then $v^\varepsilon\rightharpoonup^* \int_Y v^0(\cdot,y)\dy$ in $L^m(\Omega)$.
		\item\label{F2ST} Let $v^{\varepsilon}\WTSSCon v^0$ in $L^{m}(\Omega\times Y)$ and $w^{\varepsilon}\STSCon w^0$ in $E^{m^*}(\Omega\times Y)$ then $\int_\Omega v^{\varepsilon} w^{\varepsilon}\to\int_\Omega\int_Yv^0w^0$.
	\item\label{F2SFo} Let $v^{\varepsilon}\WTSSCon v^0$ in $L^m(\Omega\times Y)$ then for any $\psi\in C^\infty_c\left(\Omega;C^\infty_{per}(Y)\right)$
	\begin{equation*}
		\lim_{\varepsilon\rightarrow 0}\int_\Omega v^{\varepsilon}(x) \psi\left(x,\frac{x}{\varepsilon}\right)\dx=\int_\Omega\int_Y v^0(x,y)\psi(x,y)\dy\dx.
	\end{equation*}
	\item\label{F2SFi} Let $\{v^\varepsilon\}$ be a bounded sequence in $L^m(\Omega)$. Then there is $v^0\in L^m(\Omega\times Y)$ and a subsequence $\varepsilon_k\to0$ as $k\to\infty$ such that $v^{\varepsilon_k}\WTSSCon v^0$ in $L^m(\Omega\times Y)$ as $k\to\infty$.
	\item\label{F2SSi} Let $\{v^{\varepsilon}\}\subset V_0^m$ be such that
	\begin{equation}
		\begin{alignedat}{2}
			v^\varepsilon&\WSCon v&&\text{ in }L^m(\Omega),\\
			\nabla v^\varepsilon&\WSCon \nabla v&&\text{ in }L^m(\Omega;\eR^d).
			\end{alignedat}
	\end{equation}
	Then $v^{\varepsilon}\WTSSCon v$ in $L^m(\Omega\times Y)$ and there is a subsequence $\varepsilon_k\rightarrow 0$ as $k\rightarrow \infty$ and $\mathbf{v}\in L^m(\Omega\times Y;\Rd)$ such that $\nabla v^{\varepsilon_k}\WTSSCon\nabla v+\boldv$ in $L^m(\Omega\times Y;\Rd)$ as $k\to\infty$ and for a.a. $x\in\Omega$ and any $\bpsi\in C^\infty_{per}(Y;\Rd)$ fulfilling $\div \bpsi=0$ in $Y$, there holds
	\begin{equation*}
		\int_Y \mathbf{v}(x,y)\cdot \bpsi(y)\dy=0.
	\end{equation*}
\item\label{F2SSev}  Let $\Phi:\Rd\times \eR^{d\times N} \rightarrow \eR$ satisfy:
\begin{enumerate}[label=(\alph*)]
	\item $\Phi$ is Carath\'eodory,
	\item $\Phi(\cdot,\boldxi)$ is $Y-$periodic for any $\boldxi\in\eR^{d\times N}$, $\Phi(y,\cdot)$ is convex for almost all $y\in Y$,
	\item $\Phi\geq 0$, $\Phi(\cdot,0)=0$.
\end{enumerate}
Then for any sequence $\boldUeps\WTSSCon \boldU$ in $L^m(\Omega\times Y;\eR^{d\times N})$ it follows that
\begin{equation*}
\liminf_{\varepsilon\rightarrow \infty}\int_{\Omega}\Phi\left(\frac{x}{\varepsilon},\boldUeps(x)\right)\dx\geq \int_{\Omega\times Y}\Phi(y,\boldU(x,y))\dy\dx.
\end{equation*}
\end{enumerate}
\begin{proof}
By Lemma~\ref{Lem:Decomp} we have for $v$ extended by zero on $(\Rd\setminus\Omega)\times Y$ that 
\begin{equation*}
	v^\varepsilon\circ S_\varepsilon (x,y)=v\left(S_\varepsilon(x,y),\frac{S_\varepsilon(x,y)}{\varepsilon}\right)=v\left(\varepsilon \left(N\left(\frac{x}{\varepsilon}\right)+y\right),y\right)
\end{equation*}
is an integrable function of $(x,y)$. According to \cite[Theorem 3.15.5]{KJF77} $v\in E^{m}(\Omega\times Y)$ is $m-$mean continuous, i.e., for given $\eta>0$ there exists $\kappa>0$ such that
$\|v_h-v\|_{L^m}\leq \eta$ for $h=(h^1,h^2)\in\eR^{2d}$ with $|h|<\kappa$, where
\begin{equation*}
	v_h(x,y):=\begin{cases}
		v(x+h^1,y+h^2)&\text{ if }(x+h^1,y+h^2)\in \Omega\times Y,\\
		0&\text{otherwise}.
	\end{cases}
\end{equation*}
Hence for fixed $\eta>0$ we find $\kappa>0$ such that $\|v_h-v\|_{L^m(\Omega\times Y)}<\eta$ for all $|h|<\kappa$. Due to~\eqref{TSCompUnifConv} we find $\varepsilon_0>0$ such that for all $\varepsilon\leq\varepsilon_0$ $\left\|\varepsilon(N(\frac{x}{\varepsilon})+y)-x\right\|_{L^\infty(\Rd\times Y)}<\kappa$. For fixed $\eta$ we found $\varepsilon_0>0$ such that for all $\varepsilon\leq\varepsilon_0$ we have $\|v^\varepsilon\circ S_\varepsilon-v\|_{L^m(\Omega\times Y)}<\eta$, which concludes~\ref{F2SFir}.

We obtain~\ref{F2SS} once we use in the definition of the weak$^*$ two--scale convergence in $L^m(\Omega\times Y)$ test functions, which are  independent of $y$-variable.

Assertion~\ref{F2ST} follows immediately from the definition of the weak$^*$ two--scale convergence in $L^m(\Omega)$, strong two--scale convergence in $E^{m^*}(\Omega)$ and Lemma~\ref{Lem:Decomp} applied to the function $g=v^{\varepsilon} w^{\varepsilon}$ independent of $y$.

To show assertion~\ref{F2SFo} we fix a weakly$^*$ two--scale convergent sequence $\{v^{\varepsilon_k}\}_{k=1}^\infty\subset L^m(\Omega)$ with a limit $v^0\in L^m(\Omega\times Y)$ and $\psi\in C^\infty_c(\Omega;C^\infty_{per}(Y))$. Then we have $v^{\varepsilon_k}(x)\psi(x,y)\in L^1(\Rd;L^\infty_{per}(Y))$ provided that we set $v^{\varepsilon_k}=0$ in $\eR^d\setminus\Omega$, $\psi=0$ in $(\eR^d\setminus\Omega)\times Y$. Therefore by Lemma~\ref{Lem:Decomp} we get
\begin{equation*}
	\int_\Omega v^{\varepsilon_k}(x)\psi\left(x,\frac{x}{\varepsilon_k}\right)\dx=\int_{\Omega}\int_Y v^{\varepsilon_k}(S_{\varepsilon_k}(x,y))\psi(S_{\varepsilon_k}(x,y),y)\dy\dx.
\end{equation*}
Combining this with the convergence results $v^{\varepsilon_k}\WTSSCon v^0$ in $L^m(\Omega\times Y)$ and $\psi\left(x,\frac{x}{\varepsilon_k}\right)\STSCon\psi(x,y)$ in $E^{m^*}(\Omega\times Y)$ as $k\rightarrow\infty$, which follows by assertion~\ref{F2SFir}, we infer
\begin{equation*}
\lim_{k\rightarrow\infty}\int_{\Omega}v^{\varepsilon_k}(x)\psi\left(x,\frac{x}{\varepsilon_k}\right)\dx
=\lim_{k\rightarrow}\int_{\Omega}\int_Yv^{\varepsilon_k}(S_{\varepsilon_k}(x,y))\psi\left(S_{\varepsilon_k}(x,y),y\right)\dy\dx=\int_\Omega\int_Y v^0(x,y)\psi(x,y)\dy\dx
\end{equation*}
by assertion~\ref{F2ST}.\\
In order to show~\ref{F2SFi}, we first realize that for any $\{v^\varepsilon\}$ bounded in $L^m(\Omega)$ Lemma~\ref{Lem:Decomp} applied to a function $g=m\left(\frac{|v^\varepsilon|}{\lambda}\right)$ independent of $y$ implies
\begin{equation*}
c\geq \int_\Omega m\left(\frac{|v^{\varepsilon}(x)|}{\lambda}\right)\dx=\int_\Omega\int_Y m\left(\frac{|v^{\varepsilon}(S_\varepsilon(x,y))|}{\lambda}\right)\dy\dx
\end{equation*}
for some $\lambda>0$.  We deduce the existence of a selected subsequence $\{v^{\varepsilon_k}\circ S_{\varepsilon_k}\}\subset \{v^{\varepsilon}\circ S_\varepsilon\}$ and the limit function $v^0\in L^m(\Omega\times Y)$ such that $v^{\varepsilon_k}\circ S_{\varepsilon_k}\WSCon v^0$ in $L^m(\Omega\times Y)$ as $k\rightarrow 0$ by the Banach-Alaoglu theorem for spaces with a separable predual. We recall that $L^m(\Omega\times Y)=\left(E^{m^*}(\Omega\times Y)\right)^*$. Assertion~\ref{F2SFi} obviously follows by the definition of weak$^*$ two--scale convergence.

In order to show~\ref{F2SSi} we observe first that $\{v^\varepsilon\}_{\varepsilon\in(0,1)}$ is bounded in $L^m(\Omega)$. Thus by~\ref{F2SFi} there is a sequence $\varepsilon_k\rightarrow 0$ as $k\rightarrow \infty$ and $v^0\in L^m(\Omega\times Y)$ such that $v^{\varepsilon_k}\WTSSCon v^0$ in $L^m(\Omega\times Y)$. Then~\ref{F2ST} implies for all $\varphi\in C^\infty_c(\Omega,C^\infty_{per}(Y)^d)$ that
\begin{equation*}
	\begin{split}
	0&=-\lim_{k\to \infty}\varepsilon_k\int_{\Omega}\nabla v^{\varepsilon_k}(x)\cdot \left[\varphi\left(x,\frac{x}{\varepsilon_k}\right)\right]\dx=\lim_{k\to \infty}\varepsilon_k\int_{\Omega}v^{\varepsilon_k}(x)\div\left[\varphi\left(x,\frac{x}{\varepsilon_k}\right)\right]\dx\\
&=\lim_{k\to\infty}\int_{\Omega}\varepsilon_k v^{\varepsilon_k}(x)\div_x\varphi\left(x,\frac{x}{\varepsilon_k}\right)+v^{\varepsilon_k}(x)\div_y\varphi\left(x,\frac{x}{\varepsilon_k}\right)\dx
=\int_{\Omega}\int_Y v^0(x,y)\div_y \varphi(x,y)\dy\dx,
	\end{split}
\end{equation*}
which implies that $v^0$ is independent of $y$. As $v=\int_Y v^0$ by~\ref{F2SS}, we see that for any weakly$^*$ two--scale convergent subsequence of $\{v^\varepsilon\}$ the limit is $v$. Hence $v$ is the weak$^*$ two--scale limit of the entire sequence $\{v^\varepsilon\}$. Applying~\ref{F2SFo} on the sequence $\{\nabla v^{\varepsilon_k}\}$ we get the subsequence $\{v^{\varepsilon_k}\}$ (that will not be relabeled) and $\boldw\in L^m(\Omega\times Y;\Rd)$ such that $\nabla v^{\varepsilon_k}\WTSSCon\boldw$ in $L^m(\Omega\times Y;\Rd)$ as $k\to\infty$. Let us choose $z\in C^\infty_c(\Omega)$ and $\bpsi\in C^\infty_{per}(Y;\Rd)$ with $\div_y \bpsi=0$ in $Y$. Then it follows from (i) (applied to $\bpsi$) and (iii) that
\begin{equation*}
	\lim_{k\to\infty}\int_{\Omega}\nabla v^{\varepsilon_k}(x)\cdot z(x)\bpsi\left(\frac{x}{\varepsilon_k}\right)\dx=\int_\Omega\int_Y \boldw(x,y)\cdot z(x)\bpsi(y)\dy\dx
\end{equation*}
whereas the integration by parts yields
\begin{equation*}
	\begin{split}
	&\lim_{k\to\infty}\int_{\Omega}\nabla v^{\varepsilon_k}(x)\cdot z(x)\bpsi\left(\frac{x}{\varepsilon_k}\right)\dx=-\lim_{k\to\infty}\int_{\Omega} v^{\varepsilon_k}(x)\nabla z(x)\cdot \bpsi\left(\frac{x}{\varepsilon_k}\right)\dx\\&=-\int_{\Omega}\int_Y v(x)\nabla z(x)\cdot\bpsi(y)\dy\dx=\int_{\Omega}\int_Y \nabla v(x)\cdot z(x)\bpsi(y)\dy\dx.
	\end{split}
\end{equation*}
Hence the function $\boldv=\boldw-\nabla v$ has all required properties. \\
Let us show~\ref{F2SSev}. It follows from Lemma~\ref{Lem:Decomp} and Lemma~\ref{Lem:SemCon} that for $\boldUeps,\boldU$ extended by zero in $\Rd\setminus\Omega$
\begin{equation*}
	\liminf_{\varepsilon\rightarrow 0}\int_\Omega \Phi\left(\frac{x}{\varepsilon},\boldUeps(x)\right)\dx=\liminf_{\varepsilon\rightarrow 0}\int_{\Omega\times Y}\Phi\left(y,\boldUeps(S_{\varepsilon}(x,y)\right)\dx\dy\geq \int_{\Omega\times Y}\Phi(y,\boldU(x,y))\dx\dy
\end{equation*}
since $\boldUeps\WTSSCon\boldU$ in $L^m(\Omega\times Y; \eR^{d\times N})$ implies $\boldUeps\WCon\boldU$ in $L^1(\Omega\times Y; \eR^{d\times N})$. Hence we conclude~\ref{F2SSev}.
\end{proof}
\end{Lemma}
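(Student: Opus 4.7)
The plan is to address the seven assertions in order, with nearly every step passing through Lemma~\ref{Lem:Decomp} in order to rewrite an integral involving $v^\varepsilon(x)=v(x,x/\varepsilon)$ as one involving $v^\varepsilon\circ S_\varepsilon$ on the product space $\Omega\times Y$, and then exploiting the uniform convergence \eqref{TSCompUnifConv} of $S_\varepsilon$ to the identity map.

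For \ref{F2SFir} I would invoke $m$-mean continuity of functions in $E^m(\Omega\times Y)$, which follows from density of bounded measurable (or continuous) functions together with absolute continuity of the modular: given $\eta>0$ there is $\kappa>0$ so that any translate of $v$ by a vector of norm at most $\kappa$ differs from $v$ by at most $\eta$ in $L^m(\Omega\times Y)$; since $S_\varepsilon(x,y)-x=\varepsilon(y-R(x/\varepsilon))$ is uniformly small, the claim follows by comparing $v\circ S_\varepsilon$ with $v$ itself. Assertions \ref{F2SS}--\ref{F2SFo} are then essentially immediate consequences of the definition: \ref{F2SS} by restricting to test functions independent of $y$; \ref{F2ST} by writing the integral of $v^\varepsilon w^\varepsilon$ as an integral of $(v^\varepsilon\circ S_\varepsilon)(w^\varepsilon\circ S_\varepsilon)$ via Lemma~\ref{Lem:Decomp} and pairing weak$^*$ with strong convergence in the dual Musielak--Orlicz pair $(L^m,E^{m^*})$; and \ref{F2SFo} by combining \ref{F2SFir} applied to $\psi$ with \ref{F2ST}.

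For \ref{F2SFi} I would use that the modular bound on $\{v^\varepsilon\}$ transfers to a modular bound on $\{v^\varepsilon\circ S_\varepsilon\}$ on $\Omega\times Y$ via Lemma~\ref{Lem:Decomp}; since $L^m(\Omega\times Y)=(E^{m^*}(\Omega\times Y))^*$ with separable predual, the Banach--Alaoglu theorem yields a weak$^*$ cluster point, which by definition is the sought two--scale limit. For \ref{F2SSi}, I would first apply \ref{F2SFi} to $v^\varepsilon$ to extract a two--scale limit $v^0$, then test with $\varepsilon\varphi(x,x/\varepsilon)$ for $\varphi\in C^\infty_c(\Omega;C^\infty_{per}(Y;\Rd))$: integration by parts produces both a $\varepsilon\div_x$ term that vanishes in the limit and a $\div_y$ term that survives, forcing $\div_y v^0=0$ in the distributional sense, hence $v^0$ is independent of $y$, and by \ref{F2SS} equals $v$. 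Applying \ref{F2SFi} again to $\nabla v^\varepsilon$ produces a two--scale limit $\boldw$, and testing with $z(x)\bpsi(x/\varepsilon)$ where $\bpsi$ is divergence-free periodic, first directly via \ref{F2SFir} and \ref{F2ST} and then after integration by parts throwing the gradient onto $z$, yields the required orthogonality of $\boldv=\boldw-\nabla v$ to divergence-free $\bpsi$. Finally, \ref{F2SSev} reduces, via Lemma~\ref{Lem:Decomp}, to classical lower semi--continuity of convex integral functionals under weak $L^1$ convergence (Lemma~\ref{Lem:SemCon}), since weak$^*$ two--scale convergence of $\boldUeps$ in $L^m(\Omega\times Y)$ implies weak $L^1(\Omega\times Y)$ convergence of $\boldUeps\circ S_\varepsilon$ on the bounded domain.

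I expect the main obstacle to be the orthogonality argument in \ref{F2SSi}: one must carefully track how the oscillating test functions interact with both the $x$-gradient and $y$-gradient, verify that each $\varepsilon$-scaled test function is admissible in the relevant Musielak--Orlicz space (so that \ref{F2ST} and \ref{F2SFi} can genuinely be invoked for the corresponding sequences), and extract the correct divergence-free structure of the defect $\boldw-\nabla v$; compared to the reflexive $L^p$ setting, the absence of $\Delta_2$/$\nabla_2$ conditions means each duality pairing needs to be checked against $E^{m^*}$ rather than the whole $L^{m^*}$.
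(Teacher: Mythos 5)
Your proposal follows the paper's proof step for step: the same reliance on Lemma~\ref{Lem:Decomp} and the uniform convergence \eqref{TSCompUnifConv}, the $m$-mean continuity argument for~\ref{F2SFir}, the Banach--Alaoglu argument via the separable predual $E^{m^*}$ for~\ref{F2SFi}, the $\varepsilon\varphi(x,x/\varepsilon)$ test in~\ref{F2SSi} to force $y$-independence followed by the divergence-free test to get orthogonality, and the reduction of~\ref{F2SSev} to Lemma~\ref{Lem:SemCon}. The only cosmetic slip is writing $\div_y v^0=0$ where $\nabla_y v^0=0$ is meant (since $v^0$ is scalar and $\varphi$ is the vector-valued test function whose divergence is taken); the conclusion you draw is the correct one.
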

\subsection{Properties of the mapping $\hat{\boldA}$}
Let us define an operator $\hat{\boldA}\!:\eR^{d\times N}\rightarrow\eR^{d\times N}$ as
\begin{equation}\label{AhDef}
	\hat{\boldA}(\boldxi)=\int_Y\boldA(y,\boldxi+ \nabla \boldw_{\boldxi})\dy
\end{equation}
where the $Y-$periodic function $\boldw_{\boldxi}$ is a unique solution of the following cell problem
\begin{equation}\label{CellPr}
	\div\boldA(y,\boldxi+\nabla \boldw_{\boldxi})=0\text{ in }Y.
\end{equation}
In what follows, we show that this definition is meaningful and derive the essential properties of the operator $\hat{\boldA}$ needed later for the homogenization problem.
\begin{Lemma}\label{Lem:cellex}
	Let $Y=(0,1)^d$, the operator $\boldA$ satisfy \ref{AO}--\ref{AF} and the ${\mathcal N}$--function $M$ satisfy \ref{MO}--\ref{M4}. Then the problem~\eqref{CellPr} admits a unique weak solution $\boldw_{\boldxi}\in V^M_{per}$ satisfying for all $\bphi\in V^M_{per}$
	\begin{equation}\label{CellPrWF}
	\int_{Y} \boldA\left(y,\boldxi+\nabla \boldw_{\boldxi}(y)\right)\cdot\nabla\bphi(y)\dy=0.
\end{equation}
Moreover,
\begin{equation}\label{WSCont}
	\boldxi^k\rightarrow \boldxi\text{ in }\eR^{d\times N}\text{ implies }\boldA(\cdot,\boldxi^k+\nabla \boldw^k)\WSCon\boldA(\cdot,\boldxi+\nabla \boldw)\text{ in }L^{M^*}(Y;\eR^{d\times N}),
\end{equation}
where $\boldw^k$ is a solution of the cell problem corresponding to $\boldxi^k$ and $\boldw$ to $\boldxi$.
	\begin{proof}
		We omit existence and uniqueness proofs since it suffices to modify straightforwardly the methods used in the proofs of Theorem~\ref{Thm:WSExistMHoeldC} in the appendix. Notice here that we do not have any restriction on the geometry since we deal only with spatially periodic setting.
		
		Let us assume that $\{\boldxi^k\}_{k=1}^\infty$ is such that $\boldxi^k\rightarrow\tilde\boldxi$ in $\eR^{d\times N}$ as $k\rightarrow\infty$. We denote by $\boldw^k$ the solution of the cell problem corresponding to $\boldxi^k$ and by $\tilde\boldw$ the solution corresponding to $\tilde\boldxi$. We also denote $\boldZ^k(y):=\boldA(y,\boldxi^k+\nabla \boldw^k(y))$. First, we show that
		\begin{equation}\label{ZNEst}
			\int_YM(y,\boldxi^k+\nabla \boldw^k(y))+M^*(y,\boldZ^k(y))\dy\leq c.
		\end{equation}
		Since $\boldw_k$ is always an admissible test function in~\eqref{CellPrWF} for $\boldxi:=\boldxi^k$,  we directly obtain
		\begin{equation}\label{TestCPrSol}
			\int_Y \boldZ^k(y)\cdot\nabla \boldw^k(y)\dy=0.
		\end{equation}
Hence, using~\ref{ATh}, \eqref{TestCPrSol} and the Young inequality yields	(assuming without loss of generality that $c\le 1$)
		\begin{equation*}
			\begin{split}
			c&\int_Y M^*(y,\boldZ^k(y))+M(y,\boldxi^k+\nabla \boldw^k)\dy\leq \int_Y \boldZ^k\cdot(\boldxi^k+\nabla \boldw^k)\dy=\int_Y \boldZ^k\cdot\boldxi^k\dy\\
			&\leq \frac{c}{2}\int_Y M^*(y,\boldZ^k)\dy+\int_Y M\left(y,\frac{2}{c}\boldxi^k\right)\dy.
			\end{split}
		\end{equation*}
The second integral on the right hand side is finite due to~\ref{MTwo} as $\{\boldxi^k\}_{k=1}^\infty$ is bounded.	Without loss of generality, we can assume that
		\begin{equation}\label{WSConWNZN}
			\begin{alignedat}{2}
				\nabla\boldw^k&\WSCon\nabla\bar \boldw&&\text{ in }L^M(Y;\eR^{N}),\\
				\boldZ^k&\WSCon\boldZ&&\text{ in }L^{M^*}_{per}(Y;\eR^{d\times N})
			\end{alignedat}
		\end{equation}
		as $k\rightarrow\infty$. We show that $\bar \boldw=\tilde\boldw$ and $\boldZ=\boldA(\cdot,\tilde\boldxi+\nabla \tilde\boldw)$. We immediately obtain that
		\begin{equation}\label{ZXiCon}
			\lim_{k\rightarrow\infty}\int_Y \boldZ^k(y)\cdot\boldxi^k\dy=\int_Y \boldZ(y)\cdot\tilde\boldxi\dy.
		\end{equation}
Further, we also use the following identity
\begin{equation}\label{EqForZ}
	\int_Y \boldZ(y)\cdot\nabla\bphi(y)\dy=0 
\end{equation}
for all $\bphi\in V_{per}^M$. In order to show it, we observe that from~\eqref{WSConWNZN} and the definition of $\boldZ^k$ the identity~\eqref{EqForZ} follows for all $\bphi \in W^1_{per}E^M(Y; \eR^d)$. Since $M$ satisfies~\ref{MTh}, we can use the density of smooth functions in the modular topology, see Step 5 of Theorem~\ref{Thm:WSExistMHoeldC}, to deduce~\eqref{EqForZ} for all $\bphi\in V_{per}^M$.
From~\eqref{TestCPrSol}, \eqref{ZXiCon} and~\eqref{EqForZ} we infer
\begin{equation}\label{IdentLimBZ}
	\lim_{k\rightarrow\infty}\int_Y \boldZ^k(y)\cdot(\boldxi^k+\nabla \boldw^k(y))\dy=\int_Y\boldZ(y)\cdot(\tilde\boldxi+\nabla \bar \boldw(y))\dy.
\end{equation}
Since $\boldA(x,0)=0$ and $\boldA$ is monotone, the negative part of $\boldZ^k\cdot(\boldxi^k+\nabla \boldw^k)$ is trivially weakly compact in $L^1(Y)$. Due to Lemma~\ref{Lem:YMProp} and~\eqref{IdentLimBZ} we get
		\begin{equation}\label{EssIneqYM}
			\int_Y\int_{\eR^{d\times N}} \boldA(y,\boldzeta)\cdot\boldzeta\d\nu_y(\boldzeta)\dy\leq\liminf_{k\rightarrow\infty}\int_Y \boldZ^k(y)\cdot(\boldxi^k+\nabla \boldw^k(y))\dy= \int_\Omega\boldZ(y)\cdot(\tilde\boldxi+\nabla \bar\boldw(y))\dy,
		\end{equation}
		where $\nu_y$ is the Young measure generated by $\{\boldxi^k+\nabla \boldw^k\}_{k=1}^\infty$. The monotonicity of $\boldA$ yields
		\begin{equation}\label{MonY}
			\int_Y\int_{\eR^{d\times N}}h(y,\boldzeta)\d\nu_y(\boldzeta)\dy\geq 0
		\end{equation}
		for $h(y,\boldzeta):=(\boldA(y,\boldzeta)-\boldA(y,\tilde\boldxi+\nabla \bar \boldw))\cdot(\boldzeta-\tilde\boldxi-\nabla \bar \boldw)$. Since $\{\boldxi^k+\nabla \boldw^k\}_{k=1}^\infty$ and $\{\boldA(\cdot,\boldxi^k+\nabla \boldw^k)\}_{k=1}^\infty$ are weakly relatively compact due to~\eqref{ZNEst} and $\boldA$ is a Carath\'eodory function, Lemma~\ref{Lem:YMProp} implies
		\begin{equation}\label{YMLimIdent}
			\begin{alignedat}{2}
				\tilde\boldxi+\nabla \bar \boldw&=\int_{\eR^{d\times N}}\boldzeta\d\nu_y(\boldzeta)&&\text{ a.e. in }Y,\\
				\boldZ&=\int_{\eR^{d\times N}}\boldA(\cdot,\boldzeta)\d\nu_y(\boldzeta)&&\text{ a.e. in }Y.
			\end{alignedat}
		\end{equation}
		Then we get
		\begin{equation*}
			\int_Y\int_{\eR^{d\times N}}h(y,\boldzeta)\d\nu_y(\boldzeta)\dy=\int_Y \int_{\eR^{d\times N}}\boldA(y,\boldzeta)\cdot\boldzeta\d\nu_y(\boldzeta)\dy-\int_Y \boldZ\cdot(\tilde\boldxi+\nabla \bar \boldw)\leq  0
		\end{equation*}
		by~\eqref{EssIneqYM}. Combining this with~\eqref{MonY} we obtain $\int_{\eR^{d\times N}} h(y,\boldzeta)\d\nu_y(\boldzeta)=0$ for a.a. $y\in Y$. As $\nu_y$ is a probability measure and $\boldA$ is strictly monotone, we infer that $\supp\{\nu_y\}=\{\tilde\boldxi+\nabla \bar \boldw\}$ a.e. in $Y$. Thus we have $\nu_y=\delta_{\tilde\boldxi+\nabla \bar \boldw(y)}$ a.e. in $Y$. Inserting this into~\eqref{YMLimIdent}$_2$ yields $\boldZ(y)=\boldA(y,\tilde\boldxi+\nabla \bar \boldw(y))$. Hence we infer due to~\eqref{EqForZ} that $\bar \boldw$ is a weak solution to~\eqref{CellPr} corresponding to $\tilde\boldxi$. Since this solution is unique, we obtain $\bar \boldw=\tilde\boldw$. Up to now we have shown that from $\{\boldZ^k\}_{k=1}^\infty$ there can be extracted a subsequence that converges weakly$^*$ to $\boldA(\cdot,\tilde\boldxi+\nabla\tilde\boldw)$ in $L^{M^*}(Y;\eR^{d\times N})$. The uniqueness of this limit implies that the whole sequence $\{\boldZ^k\}_{k=1}^{\infty}$ must converge to $\boldA(\cdot,\tilde\boldxi+\nabla\tilde\boldw)$, which finishes the proof.
	\end{proof}
\end{Lemma}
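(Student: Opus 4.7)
The plan is to split the lemma into two parts: (a) existence and uniqueness of $\boldw_{\boldxi}$ in $V^M_{per}$, and (b) the weak$^*$ continuity \eqref{WSCont}. For (a), I would adapt the Galerkin plus monotone-operator construction of Theorem~\ref{Thm:WSExistMHoeldC} by replacing $V^M_0$ with $V^M_{per}$ throughout; periodicity removes boundary issues, the coercivity \ref{ATh} delivers uniform modular bounds on the approximants, \ref{AF} gives strict monotonicity (hence uniqueness via testing with the difference of two solutions), and the nonlinear weak$^*$ limit is identified by a Minty-style argument whose admissible test class is enlarged from $C^\infty_{per}$ to $V^M_{per}$ via the modular density of Lemma~\ref{Thm:ModDensity}(3).

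For (b), fix $\boldxi^k\to\tilde\boldxi$ with associated solutions $\boldw^k$ and fluxes $\boldZ^k:=\boldA(\cdot,\boldxi^k+\nabla\boldw^k)$. I would first test \eqref{CellPrWF} with $\bphi=\boldw^k$, which kills the $\nabla\boldw^k$--contribution of the test function; combined with \ref{ATh}, Young's inequality, and the bound $\int_Y M(y,2\boldxi^k/c)\,\dy<\infty$ coming from \ref{MTwo} applied to the bounded sequence $\{\boldxi^k\}$, this yields the uniform modular estimate \eqref{ZNEst}. Extracting subsequences I obtain $\nabla\boldw^k\WSCon\nabla\bar\boldw$ in $L^M(Y;\eR^{d\times N})$ and $\boldZ^k\WSCon\boldZ$ in $L^{M^*}(Y;\eR^{d\times N})$. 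To identify $\boldZ$, I would first pass to the limit in \eqref{CellPrWF} with test functions $\bphi\in W^1_{per}E^M(Y;\eR^N)$ — where the weak$^*$/strong duality pairing is automatic — and then extend the resulting identity \eqref{EqForZ} to every $\bphi\in V^M_{per}$ by modular approximation (Lemma~\ref{Thm:ModDensity}(3)); together with the strong convergence $\boldxi^k\to\tilde\boldxi$ this produces the crucial energy convergence \eqref{IdentLimBZ}.

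The identification of the limit is then a Young-measure/Minty argument: by \eqref{ZNEst} and Lemma~\ref{Lem:UnifIntegr} the sequences $\{\boldxi^k+\nabla\boldw^k\}$ and $\{\boldZ^k\}$ are uniformly integrable, so Lemma~\ref{Lem:YMProp} produces a Young measure $\nu_y$ satisfying \eqref{YMLimIdent}. Monotonicity yields \eqref{MonY} for $h(y,\boldzeta):=(\boldA(y,\boldzeta)-\boldA(y,\tilde\boldxi+\nabla\bar\boldw))\cdot(\boldzeta-\tilde\boldxi-\nabla\bar\boldw)$, while \eqref{IdentLimBZ} combined with \eqref{YMLimIdent} forces $\int_Y\!\int h(y,\boldzeta)\,\d\nu_y(\boldzeta)\,\dy\le 0$. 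Strict monotonicity \ref{AF} then pins $\nu_y=\delta_{\tilde\boldxi+\nabla\bar\boldw(y)}$ a.e.\ in $Y$, so $\boldZ=\boldA(\cdot,\tilde\boldxi+\nabla\bar\boldw)$. In view of \eqref{EqForZ} this makes $\bar\boldw$ a cell-problem solution for $\tilde\boldxi$; uniqueness from (a) gives $\bar\boldw=\tilde\boldw$, and since every weak$^*$ cluster point of $\{\boldZ^k\}$ must coincide with $\boldA(\cdot,\tilde\boldxi+\nabla\tilde\boldw)$, the entire sequence converges.

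I expect the main obstacle to be precisely the upgrade of \eqref{EqForZ} from $W^1_{per}E^M$ to the full test space $V^M_{per}$. Without $\Delta_2$/$\nabla_2$, the norm closure $W^1_{per}E^M$ is a \emph{proper} subspace of $V^M_{per}$, and the limit $\bar\boldw$ need not lie in it; thus the limiting identity obtained directly from \eqref{CellPrWF} cannot be tested with $\bar\boldw$ itself, which is exactly what the energy identity \eqref{IdentLimBZ} requires. The only way around this is a \emph{modular} (rather than merely norm) approximation of periodic Musielak--Orlicz--Sobolev functions by smooth periodic ones, which is where the log-Hölder-type condition \ref{M4} on $M$ (encoded in Lemma~\ref{Thm:ModDensity}(3)) plays its essential role.
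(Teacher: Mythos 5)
Your proposal follows essentially the same route as the paper's proof: the same a priori bound via testing with $\boldw^k$, the same extraction of weak$^*$ limits, the same extension of the limiting identity from $W^1_{per}E^M$ to $V^M_{per}$ by modular density, and the same Young-measure/Minty identification concluding with uniqueness. You also correctly single out the density-upgrade step as the point where the log-Hölder-type assumption is indispensable, which is precisely how the paper handles the lack of $\Delta_2$/$\nabla_2$.
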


Now, we investigate the properties of a functional $f:\eR^{d\times N}\rightarrow [0,\infty)$ defined as
\begin{equation}
	f(\boldxi)=\inf_{\boldW\in G(Y)}\int_YM(y,\boldxi+\boldW(y))\dy. \label{DEF:f}
\end{equation}

\begin{Lemma}\label{Lem:FProp}
	Let ${\mathcal N}$--function $M$ satisfy \ref{MO}--\ref{MTwo}.  Then the functional $f$ defined in~\eqref{DEF:f} is an ${\mathcal N}$--function, i.e., it satisfies:
	\begin{enumerate}[label=\arabic*)]
		\item \label{fFi} $f(\boldxi)=0$ if and only if $\boldxi=\bzero$,
		\item \label{fS} $f(\boldxi)=f(-\boldxi)$,
		\item  $f$ is convex,
		\item \label{fFo} $\lim_{|\boldxi|\rightarrow 0}\frac{f(\boldxi)}{|\boldxi|}=0$, $\lim_{|\boldxi|\rightarrow\infty}\frac{f(\boldxi)}{|\boldxi|}=\infty$.
	\end{enumerate}
	\begin{proof}
		First, we show that
		\begin{equation}\label{AbBeEstF}
			m_1(|\boldxi|)\leq f(\boldxi)\leq m_2(|\boldxi|).
		\end{equation}
		Let us show the first inequality in the latter estimate. Using~\ref{MTwo}, Jensen's inequality and the fact that the average over $Y$ of the gradient of an $Y-$periodic function vanishes we have
		\begin{equation*}
		\begin{split}
			f(\boldxi)&=\inf_{\boldW\in G(Y)}\int_Y M(y,\boldxi+\boldW(y))\dy\geq \inf_{\boldW\in G(Y)}\int_Y m_1(|\boldxi+\boldW(y)|)\dy\geq \inf_{\boldW\in G(Y)}m_1\left(\left|\boldxi+\int_Y\boldW(y)\dy\right|\right)\\
			&\geq m_1(|\boldxi|).
			\end{split}
		\end{equation*}
		On the other hand we get by~\ref{MTwo} that $f(\boldxi)\leq m_2(|\boldxi|)$ since $\bzero\in G(Y)$, which follows from the fact that $G$ is a subspace of $E^M(Y;\eR^{d\times N})$.\\
		Assertions~\ref{fFi} and~\ref{fFo} then follow immediately from~\eqref{AbBeEstF}.\\
		Obviously, since $M$ is even in the second argument and $G(Y)$ is a subspace of $E^M_{per}(Y;\eR^{d\times N})$ we have~\ref{fS}.\\
	In order to show the convexity of $f$ we take $\lambda\in (0,1)$, $\boldxi_1,\boldxi_2\in\eR^{d\times N}$ and $\boldW_1,\boldW_2\in G(Y)$. Again the fact that $G(Y)$ is a subspace of $E^M_{per}(Y;\eR^{d\times N})$ and the convexity of $M$ yields
		\begin{equation*}
				f(\lambda\boldxi_1+(1-\lambda)\boldxi_2)\leq \lambda\int_Y  M(y,\boldxi_1+ \boldW_1(y))\dy+(1-\lambda)\int_Y M(y,\boldxi_2+\boldW_2(y))\dy.
		\end{equation*}
		One obtains the desired conclusion by taking the infimum over $\boldW_1$ and $\boldW_2$ on the right hand side of the latter inequality.
	\end{proof}	
\end{Lemma}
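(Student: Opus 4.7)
The plan is to prove all four $\mathcal{N}$--function properties via the two-sided sandwich estimate
$$m_1(|\boldxi|)\le f(\boldxi)\le m_2(|\boldxi|),$$
which reduces properties \ref{fFi} and \ref{fFo} directly to the corresponding properties of the scalar $\mathcal{N}$--functions $m_1, m_2$ guaranteed by \ref{MTwo}. The upper bound is immediate: since $\bzero \in G(Y)$ (being the gradient of the zero function, which is $Y$-periodic with zero mean), we may test with $\boldW\equiv\bzero$ and use $M(y,\boldxi)\le m_2(|\boldxi|)$ from \ref{MTwo} to conclude $f(\boldxi)\le \int_Y m_2(|\boldxi|)\dy = m_2(|\boldxi|)$.

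For the lower bound, I would combine the pointwise minorization $M(y,\boldxi+\boldW(y))\ge m_1(|\boldxi+\boldW(y)|)$ from \ref{MTwo} with Jensen's inequality applied to the convex function $m_1\circ|\cdot|$:
$$\int_Y m_1(|\boldxi+\boldW(y)|)\dy \ge m_1\!\left(\left|\boldxi+\int_Y \boldW(y)\dy\right|\right) = m_1(|\boldxi|),$$
where the final equality crucially uses that every $\boldW\in G(Y)$ is the gradient of a $Y$-periodic $E^M$-function with vanishing mean, so $\int_Y \boldW \dy = \bzero$. Taking the infimum over $\boldW\in G(Y)$ on the left gives $f(\boldxi)\ge m_1(|\boldxi|)$.

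The symmetry \ref{fS} will follow from the fact that $M$ is even in its second argument (being an $\mathcal{N}$--function) and $G(Y)$ is a linear subspace (hence $-\boldW\in G(Y)$ iff $\boldW\in G(Y)$): performing the substitution $\boldW\mapsto -\boldW$ inside the infimum defining $f(-\boldxi)$ recovers $f(\boldxi)$. For convexity, given $\lambda\in(0,1)$, $\boldxi_1,\boldxi_2\in\eR^{d\times N}$ and arbitrary $\boldW_1,\boldW_2\in G(Y)$, the linear combination $\lambda\boldW_1+(1-\lambda)\boldW_2$ remains in $G(Y)$, so pointwise convexity of $M(y,\cdot)$ and integration yield
$$f(\lambda\boldxi_1+(1-\lambda)\boldxi_2)\le \lambda\!\int_Y\! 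M(y,\boldxi_1+\boldW_1)\dy+(1-\lambda)\!\int_Y\! M(y,\boldxi_2+\boldW_2)\dy,$$
and taking independent infima over $\boldW_1$ and $\boldW_2$ on the right gives $f(\lambda\boldxi_1+(1-\lambda)\boldxi_2)\le \lambda f(\boldxi_1)+(1-\lambda)f(\boldxi_2)$.

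There is no serious obstacle here: the entire argument rests on the two-sided bound via Jensen, the linearity of $G(Y)$, and the pointwise convexity/evenness of $M$. The only real subtlety worth flagging is the use of the zero-mean property of $G(Y)$ in the Jensen step, which is exactly what makes the lower bound nontrivial and distinguishes $f$ from a mere pointwise bound on $M$.
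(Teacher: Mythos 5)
Your proposal is correct and follows essentially the same approach as the paper: establish the sandwich estimate $m_1(|\boldxi|)\le f(\boldxi)\le m_2(|\boldxi|)$ via Jensen's inequality and the zero-mean property of $G(Y)$, then derive properties 1) and 4) from it, and obtain symmetry and convexity from the evenness and pointwise convexity of $M$ together with the linear-subspace structure of $G(Y)$. Your spelling out that Jensen is applied to the convex function $m_1\circ|\cdot|$ is a small but welcome clarification of a step the paper leaves implicit.
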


\begin{Lemma}\label{Lem:FSt}
	Let ${\mathcal N}$--function $M$ satisfy \ref{MO}--\ref{MTwo} and $f$ be defined by~\eqref{DEF:f}. Then the conjugate ${\mathcal N}$--function $f^*$ to $f$ is given by
	\begin{equation}\label{FSt}
		f^*(\boldxi)=\inf_{\substack{\boldW^*\in G^\bot(Y),\\ \int_Y \boldW^*(y)\dy=\boldxi}}\int_Y M^*(y,\boldW^*(y))\dy.
	\end{equation}
	\begin{proof}
		 Using the fact that the average over $Y$ of a gradient of $Y-$periodic function vanishes we obtain defining a functional $\mathcal{F}:L^M(Y;\eR^{d\times N})\rightarrow\eR$ as
		\begin{equation*}
			\mathcal{F}(\boldw)=\int_Y M(y,\boldw(y))\dy.
		\end{equation*}
		that
		\begin{equation}\label{fStCom}
		\begin{split}
			f^*(\boldxi)&=\sup_{\boldeta\in\eR^{d\times N}}\left\{\boldxi\cdot\boldeta-\inf_{\boldW\in G(Y)}\mathcal{F}(\boldeta+\boldW)\right\}\\
&=\sup_{\boldeta\in\eR^{d\times N}}\left\{-\inf_{\boldW\in G(Y)}\left\{ \mathcal{F}(\boldeta+\boldW)-\int_{Y}\boldxi\cdot(\boldeta+\boldW(y))\dy\right\}\right\}\\&=-\inf_{\boldeta\in\eR^{d\times N}}\left\{\inf_{\boldW\in G(Y)}\left\{\mathcal{F}(\boldeta+\boldW)-\int_{Y}\boldxi\cdot(\boldeta+\boldW(y))\dy\right\}\right\}\\
&=-\inf_{\boldV\in \eR^{d\times N}\oplus G(Y)}\left\{\mathcal{F}(\boldV)-\int_{Y}\boldxi\cdot\boldV(y)\dy\right\}.
			\end{split}
		\end{equation}
		Expression~\eqref{FSt} is a consequence of Lemma~\ref{Lem:Duality} applied on a functional $\mathcal{F}$. First, we observe that $\mathcal{F}$ is closed or equivalently, whenever $\boldW^k\rightarrow\boldW$ in $L^M(Y;\eR^{d\times N})$ then
		\begin{equation}\label{FClo}
			\liminf_{k\rightarrow\infty}\mathcal{F}(\boldW^k)\geq\mathcal{F}(\boldW).
		\end{equation}
		Obviously $\boldW^k\rightarrow\boldW$ in $L^M_{per}(Y;\eR^{d\times N})$ implies $\boldW^k\rightarrow\boldW$ in $L^1_{per}(Y;\eR^{d\times N})$. In order to show~\eqref{FClo} it suffices to apply the lower semicontinuity of integral functionals with a Carath\'eodory integrand, see \cite[Theorem 4.2]{G03}. Moreover, $\mathcal{F}$ is continuous at $\bzero\in G$, which is a consequence of~\eqref{SSS}. The conjugate functional $\mathcal{F}^*$ to $\mathcal{F}$ is given by
		\begin{equation*}
			\mathcal{F}^*(\boldW^*)=\int_Y M^*(y,\boldW^*(y))\dy
		\end{equation*}
		according to~\eqref{SST}.
		Therefore by Lemma~\ref{Lem:Duality} we get from~\eqref{fStCom}
		\begin{equation*}
			f^*(\boldxi)=\inf_{\boldW^*\in (\eR^{d\times N}\oplus G(Y))^\bot}\int_{Y}M^*(y,\boldW^*(y)+\boldxi)\dy\text{ for all }\boldxi\in\eR^{d\times N}.
		\end{equation*}
		Finally, to conclude~\eqref{FSt} we need to show that
		\begin{equation*}
			\left(\eR^{d\times N}\oplus G(Y)\right)^\bot=\left\{\boldW^*\in G^\bot(Y): \int_Y \boldW^*(y)\dy=0\right\}=:(G^\bot(Y))_0.
		\end{equation*}
Obviously $(G^\bot(Y))_0\subset \left(\eR^{d\times N}\oplus G(Y)\right)^\bot$. In order to get the opposite inclusion, we choose $\boldW^*\in(\eR^{d\times N}\oplus G(Y))^\bot$. Hence by the definition of the annihilator $\int_Y \boldW^*\cdot(\boldeta+\boldW)\dy=0$ for any $\boldeta\in\eR^{d\times N}$ and $\boldW\in G(Y)$. We infer $\int_Y \boldW^*=0$ by setting $\boldW=0$, $\boldeta=\int_Y\boldW^*$ whereas $\boldW^*\in G^\bot(Y)$ follows by setting $\boldeta=0$.
	\end{proof}
\end{Lemma}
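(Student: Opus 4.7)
The plan is to compute $f^{*}$ directly from the definition of the Legendre transform and then recognize the resulting supremum as the Fenchel conjugate of a convex integral functional restricted to the finite-dimensional-plus-gradient subspace $\eR^{d\times N}\oplus G(Y)$ of $L^{M}(Y;\eR^{d\times N})$. The payoff is that the conjugate of such a restricted functional is given, via Fenchel--Rockafellar duality (Lemma~\ref{Lem:Duality}), as an infimum over the annihilator subspace in $L^{M^{*}}(Y;\eR^{d\times N})$, which once identified will yield exactly the formula \eqref{FSt}.

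First I would write $f^{*}(\boldxi)=\sup_{\boldeta}\{\boldxi\cdot\boldeta-f(\boldeta)\}$ and push the supremum through: because $f$ itself is an infimum, $-f(\boldeta)=\sup_{\boldW\in G(Y)}\bigl(-\int_Y M(y,\boldeta+\boldW(y))\dy\bigr)$, so the two outer suprema combine. The key algebraic trick is that for any $\boldW\in G(Y)$ one has $\int_Y\boldW(y)\dy=0$ (the average of a gradient of a $Y$-periodic function vanishes), so $\boldxi\cdot\boldeta=\int_Y\boldxi\cdot(\boldeta+\boldW(y))\dy$. Substituting and setting $\boldV:=\boldeta+\boldW$ rewrites the supremum as
\begin{equation*}
f^{*}(\boldxi)=\sup_{\boldV\in\eR^{d\times N}\oplus G(Y)}\left\{\int_{Y}\boldxi\cdot\boldV(y)\dy-\mathcal{F}(\boldV)\right\},
\end{equation*}
where $\mathcal{F}(\boldV):=\int_{Y}M(y,\boldV(y))\dy$ is treated as a functional on $L^{M}(Y;\eR^{d\times N})$. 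This is precisely the conjugate of $\mathcal{F}$ restricted to the closed subspace $\eR^{d\times N}\oplus G(Y)$.

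Next I would invoke Fenchel--Rockafellar duality (Lemma~\ref{Lem:Duality}) to write $f^{*}(\boldxi)=\inf_{\boldW^{*}\in (\eR^{d\times N}\oplus G(Y))^{\bot}}\int_{Y}M^{*}(y,\boldW^{*}(y)+\boldxi)\dy$. This requires verifying (i) that $\mathcal{F}$ is lower semicontinuous on $L^{M}$ (standard for integral functionals with Carath\'eodory convex integrands, via Fatou on an a.e.\ convergent subsequence obtained from $L^{1}$ convergence), (ii) that $\mathcal{F}$ is continuous at some point of $\eR^{d\times N}\oplus G(Y)$ (namely $\boldV=\bzero$, using the $\nabla_{2}$-free continuity property of the modular at zero), and (iii) that the convex conjugate of $\mathcal{F}$ is the expected $\int_{Y}M^{*}(y,\cdot)\dy$.

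Finally I would identify the annihilator. By definition $\boldW^{*}\in (\eR^{d\times N}\oplus G(Y))^{\bot}$ iff $\int_{Y}\boldW^{*}\cdot(\boldeta+\boldW)\dy=0$ for every $\boldeta\in\eR^{d\times N}$ and every $\boldW\in G(Y)$. Taking $\boldW=\bzero$ and letting $\boldeta$ range shows $\int_{Y}\boldW^{*}=0$; taking $\boldeta=\bzero$ shows $\boldW^{*}\in G^{\bot}(Y)$, and the reverse inclusion is immediate. Translating the infimum by replacing $\boldW^{*}+\boldxi$ with a new variable having mean $\boldxi$ gives \eqref{FSt}. The main obstacle I expect is the duality step: since $L^{M}$ is not reflexive and need not satisfy $\Delta_2/\nabla_2$, one must be careful that Lemma~\ref{Lem:Duality} applies (hence the continuity-at-zero check via the continuity of the modular on $E^{M}$), and that the formula for $\mathcal{F}^{*}$ is valid in this non-reflexive setting; once those technicalities are in place the algebraic manipulations are routine.
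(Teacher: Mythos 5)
Your proposal is correct and follows essentially the same route as the paper: rewrite $f^*$ by merging the two suprema via the zero-average property of gradients, view the result as the conjugate of $\mathcal{F}$ restricted to $\eR^{d\times N}\oplus G(Y)$, invoke Lemma~\ref{Lem:Duality} after checking lower semicontinuity of $\mathcal{F}$, its continuity at $\bzero$, and the formula for $\mathcal{F}^*$, and finally identify the annihilator $(\eR^{d\times N}\oplus G(Y))^\bot$ as $\{\boldW^*\in G^\bot(Y):\int_Y\boldW^*=0\}$. No gaps; your flagged technical concerns about non-reflexivity and the validity of Lemma~\ref{Lem:Duality} are exactly the ones the paper addresses via~\eqref{SSS} and~\eqref{SST}.
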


The ${\mathcal N}$--functions $f$ and $f^*$ indicate the growth and coercivity properties of the operator $\hat\boldA$ as it is stated among other properties of $\hat\boldA$ in the following lemma.
\begin{Lemma}\label{Lem:2.10}Let the operator $\boldA$ satisfy \ref{AO}--\ref{AF} and the ${\mathcal N}$--function $M$ satisfy \ref{MO}--\ref{M4}. Then we have:
\begin{enumerate}[label=(\^{A}\arabic*)]
	\item \label{AhO} There is a constant $c>0$ such that for all $\boldxi\in\eR^{d\times N}$
		\begin{equation*}
			\hat\boldA(\boldxi)\cdot\boldxi\geq c(f(\boldxi)+f^*(\hat\boldA(\boldxi))).
		\end{equation*}
	\item \label{AhT} For all $\boldxi,\boldeta\in\eR^{d\times N}$, $\boldxi\neq\boldeta$
		\begin{equation*}
			(\hat\boldA(\boldxi)-\hat\boldA(\boldeta))\cdot(\boldxi-\boldeta)> 0.
		\end{equation*}
	\item\label{AhTh} $\hat\boldA$ is continuous on $\eR^{d\times N}$.
\end{enumerate}
\begin{proof}
Let $\boldw$ be a weak solution of cell problem~\eqref{CellPr} corresponding to $\boldxi\in\eR^{d\times N}$, which exists due to Lemma~\ref{Lem:cellex}. Then it follows that
\begin{equation}\label{BelEst}
	\begin{split}
	\hat{\boldA}(\boldxi)\cdot\boldxi&=\int_Y\boldA(y,\boldxi+\nabla \boldw(y))\dy\cdot\boldxi=\int_Y\boldA(y,\boldxi+\nabla \boldw(y))\cdot(\boldxi+\nabla \boldw(y))\dy\\
	&\geq c\int_Y M(y,\boldxi+\nabla \boldw(y))+M^*(y,\boldA(y,\boldxi+\nabla \boldw(y))\dy.
	\end{split}
\end{equation}
Since $\boldw$ is the weak solution to~\eqref{CellPr}, we get from~\eqref{CellPrWF} in a standard way using~\ref{ATh} and the Young inequality that $\boldA(\cdot,\boldxi+\nabla\boldw)\in L^{M^*}_{per}(Y;\eR^{d\times N})$. Moreover, as identity~\eqref{CellPrWF} is satisfied for all $\bphi\in V^M_{per}(Y;\eR^N)$, it is obviously fulfilled for all $\bphi\in W^1_{per}E^M(Y;\eR^N)$. Therefore we have $\boldA(\cdot,\boldxi+\nabla\boldw)\in G^\bot(Y)$. Consequently, regarding~\eqref{AhDef} we obtain by Lemma~\ref{Lem:FSt} that
\begin{equation}\label{MStBoundMod}
	\int_Y M^*(y,\boldA(y,\boldxi+\nabla\boldw(y)))\dy\geq f^*(\hat\boldA(\boldxi)).
\end{equation}
This combined with~\eqref{BelEst} leads to the first part of the estimate in~\ref{AhO}. It remains to  justify that
\begin{equation}\label{justi}
\int_Y M(y,\boldxi+\nabla \boldw(y))\dy \ge \inf_{\bphi \in W^{1}_{per}E^M(Y; \eR)} \int_Y M(y,\boldxi+\nabla \bphi(y))\dy,
\end{equation}
as the rest then follows from the definition of $f$ and~\eqref{BelEst}. However, here we have to face the density problem, which we overcome by using the constructive approach when dealing with the solution. Thus the remaining part of this paragraph will be devoted to the proof of~\eqref{justi}.

We use the fact that $\boldw$ is in fact a modular limit of properly chosen sequence. Indeed, it follows from the construction of the solution  in Theorem~\ref{Thm:WSExistMHoeldC} that there exists a sequence $\{\boldw^k\}_{k=1}^{\infty} \subset W^1_{per}E^M(Y;\eR^N)$ such that
\begin{alignat}{2}
\nabla \boldw^k &\WSCon \nabla \boldw &&\textrm{ in } L^M(Y;\eR^{d\times N}),\label{MB1}\\
\nabla \boldw^k &\SCon \nabla \boldw &&\textrm{ a.e. in } Y,\label{MB9}\\
\boldA(\cdot,\boldxi+\nabla\boldw^k) &\WSCon \boldA(\cdot,\xi+\nabla \boldw) &&\textrm{ in } L^{M^*}(Y;\eR^{d\times N}),\label{MB3}\\
\lim_{k\to \infty} \int_Y \boldA(y,\boldxi +\nabla \boldw^k)\cdot \nabla \boldw^k\dy&\le \int_Y \boldA(y,\boldxi +\nabla \boldw)\cdot \nabla \boldw\dy.&&\label{MB2}
\end{alignat}
Therefore, denoting $\boldW_{\lambda}:=\nabla\boldw \chi_{\{|\nabla \boldw|\le \lambda\}}$, we obtain that (thanks to monotonicity of $\boldA$, the fact that $\boldW_{\lambda}$ is bounded and \eqref{MB1}--\eqref{MB2})
$$
\begin{aligned}
\lim_{\lambda \to \infty}\lim_{k\to \infty} &\int_{Y} \left| (\boldA(y,\boldxi+\nabla\boldw^k)-\boldA(y,\boldxi+\boldW_\lambda))\cdot (\nabla \boldw^k-\boldW_\lambda)\right|\dy\\
&=\lim_{\lambda \to \infty}\lim_{k\to \infty} \int_{Y}  (\boldA(y,\boldxi+\nabla\boldw^k)-\boldA(y,\boldxi+\boldW_\lambda))\cdot (\nabla \boldw^k-\boldW_\lambda)\dy\\
&\leq\lim_{\lambda \to \infty}\int_{Y}  (\boldA(y,\boldxi+\nabla\boldw)-\boldA(y,\boldxi+\boldW_\lambda))\cdot (\nabla \boldw-\boldW_\lambda)\dy\\
&=\lim_{\lambda \to \infty}\int_{Y}  \boldA(y,\boldxi+\nabla\boldw)\cdot \nabla \boldw \chi_{\{|\nabla \boldw|>\lambda\}}\dy=0,
\end{aligned}
$$
where the last equality follows from the fact that $\boldA(\cdot,\boldxi+\nabla\boldw)\cdot \nabla \boldw\in L^1(Y)$. Hence, evidently for any $\varphi\in L^{\infty}(Y)$ we deduce that
$$
\begin{aligned}
\left| \lim_{\lambda \to \infty}\lim_{k\to \infty} \int_{Y} (\boldA(y,\boldxi+\nabla\boldw^k)-\boldA(y,\boldxi+\boldW_\lambda))\cdot (\nabla \boldw^k-\boldW_\lambda)\varphi\dy\right|=0.
\end{aligned}
$$
Hence, it follows from \eqref{MB1}--\eqref{MB3} that
$$
\begin{aligned}
0&=\left| \lim_{k\to \infty} \int_{Y} \boldA(y,\boldxi+\nabla\boldw^k)\cdot \nabla \boldw^k \varphi \dy -\lim_{\lambda \to \infty}\int_Y\boldA(y,\boldxi+\nabla\boldw)\cdot \boldW_{\lambda}\varphi+\boldA(y,\boldxi+\boldW_\lambda))\cdot (\nabla \boldw-\boldW_\lambda)\varphi\dy\right|\\
&=\left| \lim_{k\to \infty} \int_{Y} \boldA(y,\boldxi+\nabla\boldw^k)\cdot \nabla \boldw^k \varphi \dy -\int_Y\boldA(y,\boldxi+\nabla\boldw)\cdot \nabla \boldw\varphi\dy\right|.
\end{aligned}
$$
Thus, we see that
\begin{equation}\label{UU1}
\boldA(y,\boldxi+\nabla\boldw^k)\cdot (\boldxi+\nabla \boldw^k)\rightharpoonup \boldA(y,\boldxi+\nabla\boldw)\cdot (\boldxi+\nabla \boldw) \textrm{ weakly in }L^1(Y).
\end{equation}
Due to the equivalent characterization of the weak convergence in $L^1$, we see that the sequence $\{\boldA(y,\boldxi+\nabla\boldw^k)\cdot (\boldxi+\nabla \boldw^k)\}_{k=1}^\infty$ is uniformly equi-integrable. Using also~\ref{ATh}, we see that also $\{M(y,\boldxi +\nabla \boldw^k)\}_{k=1}^\infty$ is uniformly equi-integrable. Therefore, it follows from the Vitali theorem and~\eqref{MB9} that
$$
\lim_{k\to \infty}\int_Y M(y,\boldxi+\nabla \boldw^k)\dy = \int_Y M(y,\boldxi+\nabla \boldw)\dy.
$$
Consequently, since $\boldw^k \in W^1_{per}E^M(Y;\eR^N)$ we see that~\eqref{justi} holds, which finishes the proof of~\ref{AhO}.

In order to show~\ref{AhT} we fix $\boldxi_1,\boldxi_2\in\eR^{d\times N}, \boldxi_1\neq\boldxi_2$ and find corresponding weak solutions of the cell problem $\boldw_1$ and $\boldw_2$. One obtains (see also appendix)
\begin{equation*}
\int_Y \boldA(y,\boldxi_i+\nabla \boldw_i(y))\cdot\nabla \boldw_j(y)\dy=0\text{ for } i,j=1,2
\end{equation*}
in the same way as~\eqref{TestCPrSol} was shown. Then it follows that
\begin{equation*}
	\begin{split}
	(\hat\boldA(\boldxi_1)-\hat\boldA(\boldxi_2))\cdot(\boldxi_1-\boldxi_2)&=\int_Y(\boldA(y,\boldxi_1+\nabla \boldw_1)-\boldA(y,\boldxi_2+\nabla \boldw_2))\cdot(\boldxi_1-\boldxi_2)\dy\\
	&=\int_Y(\boldA(y,\boldxi_1+\nabla \boldw_1)-\boldA(y,\boldxi_2+\nabla \boldw_2))\cdot(\boldxi_1+\nabla \boldw_1-\boldxi_2-\nabla \boldw_2)\dy>0
	\end{split}
\end{equation*}
by~\ref{ATh}.\\
To show~\ref{AhTh} we consider $\{\boldxi^k\}_{k=1}^\infty$ such that $\boldxi^k\rightarrow\boldxi$ in $\eR^{d\times N}$ as $k\rightarrow\infty$, a corresponding sequence of weak solutions of the cell problems $\{\boldw^k\}_{k=1}^\infty$ and $\boldw$ corresponding to $\boldxi$. Then we have for an arbitrary but fixed $\boldeta\in \eR^{d\times N}$ that
\begin{equation*}
	(\hat\boldA(\boldxi^k)-\hat\boldA(\boldxi))\cdot\boldeta=\int_Y (\boldA(y,\boldxi^k+\nabla \boldw^k)-\boldA(y,\boldxi+\nabla \boldw))\cdot\boldeta\dy\rightarrow 0
\end{equation*}
as $k\rightarrow\infty$ by~\eqref{WSCont}. Since $\eR^{d\times N}$ is finite dimensional, we conclude~\ref{AhTh} from the latter convergence.
\end{proof}
\end{Lemma}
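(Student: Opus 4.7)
The plan is to handle the three properties in the order \ref{AhO}, \ref{AhT}, \ref{AhTh}, reducing everything to Lemma~\ref{Lem:cellex} (existence of cell-problem solutions and the weak$^*$ continuity in $L^{M^*}$), Lemma~\ref{Lem:FSt} (the dual formula for $f^*$), and the monotonicity/coercivity of $\boldA$ itself.

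For the coercivity \ref{AhO}, I would start from the cell-problem solution $\boldw=\boldw_{\boldxi}\in V^M_{per}$ and test its weak formulation with $\boldw$ itself (which is admissible in $V^M_{per}$). This yields $\int_Y \boldA(y,\boldxi+\nabla\boldw)\cdot\nabla\boldw\,\dy=0$, hence
\[
\hat\boldA(\boldxi)\cdot\boldxi=\int_Y\boldA(y,\boldxi+\nabla\boldw)\cdot(\boldxi+\nabla\boldw)\,\dy\geq c\!\int_Y\!M(y,\boldxi+\nabla\boldw)+M^*(y,\boldA(y,\boldxi+\nabla\boldw))\,\dy
\]
by~\ref{ATh}. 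The $M^*$-piece gives the $f^*(\hat\boldA(\boldxi))$ bound essentially for free: the tested identity shows $\boldA(\cdot,\boldxi+\nabla\boldw)\in G^{\bot}(Y)$ with mean $\hat\boldA(\boldxi)$, so Lemma~\ref{Lem:FSt} applies. The delicate point is the lower bound $\int_Y M(y,\boldxi+\nabla\boldw)\dy\ge f(\boldxi)$, because $f$ is defined as an infimum over $G(Y)=\{\nabla\bphi:\bphi\in W^1_{per}E^M\}$, whereas $\nabla\boldw$ only lies in $L^M$, not in $E^M$. I expect this density/identification to be the main obstacle: the fix is to exploit the explicit Galerkin-type construction behind Lemma~\ref{Lem:cellex}, producing an approximating sequence $\boldw^k\in W^1_{per}E^M$ with $\nabla\boldw^k\WSCon\nabla\boldw$ in $L^M$, a.e.\ convergence, $\boldA(\cdot,\boldxi+\nabla\boldw^k)\WSCon\boldA(\cdot,\boldxi+\nabla\boldw)$ in $L^{M^*}$, and $\limsup\int \boldA(\cdot,\boldxi+\nabla\boldw^k)\cdot\nabla\boldw^k\le\int \boldA(\cdot,\boldxi+\nabla\boldw)\cdot\nabla\boldw$. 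Combining these with the monotonicity trick of inserting a truncation $\boldW_\lambda:=\nabla\boldw\,\chi_{\{|\nabla\boldw|\le\lambda\}}$ lets me pass to the limit in $\boldA(\cdot,\boldxi+\nabla\boldw^k)\cdot(\boldxi+\nabla\boldw^k)$ weakly in $L^1$, and then equi-integrability of $M(\cdot,\boldxi+\nabla\boldw^k)$ together with the Vitali theorem produces $\int_Y M(y,\boldxi+\nabla\boldw^k)\to\int_Y M(y,\boldxi+\nabla\boldw)$. Since each $\nabla\boldw^k\in G(Y)$, the definition of $f$ gives the missing inequality.

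For strict monotonicity \ref{AhT}, fix $\boldxi_1\neq\boldxi_2$ with associated cell solutions $\boldw_1,\boldw_2$. Testing the cell-problem equation for $\boldxi_i$ with $\boldw_j$ (admissible in $V^M_{per}$) yields the four identities $\int_Y\boldA(y,\boldxi_i+\nabla\boldw_i)\cdot\nabla\boldw_j\,\dy=0$, so I can add $\nabla\boldw_1-\nabla\boldw_2$ to the second factor in the inner product and rewrite
\[
(\hat\boldA(\boldxi_1)-\hat\boldA(\boldxi_2))\cdot(\boldxi_1-\boldxi_2)=\int_Y(\boldA(y,\boldxi_1+\nabla\boldw_1)-\boldA(y,\boldxi_2+\nabla\boldw_2))\cdot(\boldxi_1+\nabla\boldw_1-\boldxi_2-\nabla\boldw_2)\,\dy.
\]
Assumption~\ref{AF} makes the integrand nonnegative, and zero only where $\boldxi_1+\nabla\boldw_1=\boldxi_2+\nabla\boldw_2$; averaging over $Y$ kills the periodic gradients and forces $\boldxi_1=\boldxi_2$, a contradiction. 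So the expression is strictly positive.

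For continuity \ref{AhTh}, I would simply apply \eqref{WSCont} from Lemma~\ref{Lem:cellex}: if $\boldxi^k\to\boldxi$ in $\eR^{d\times N}$, then $\boldA(\cdot,\boldxi^k+\nabla\boldw^k)\WSCon\boldA(\cdot,\boldxi+\nabla\boldw)$ in $L^{M^*}(Y;\eR^{d\times N})$. Testing this convergence against constant functions $\boldeta\in\eR^{d\times N}\subset E^M(Y;\eR^{d\times N})$ gives $(\hat\boldA(\boldxi^k)-\hat\boldA(\boldxi))\cdot\boldeta\to 0$ for every $\boldeta$, i.e.\ componentwise convergence in the finite-dimensional space $\eR^{d\times N}$, which is what is required.
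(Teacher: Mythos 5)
Your proposal is correct and follows essentially the same route as the paper: for \ref{AhO} you test the cell problem with $\boldw$, apply \ref{ATh}, handle the $M^*$ piece via $\boldA(\cdot,\boldxi+\nabla\boldw)\in G^\bot(Y)$ and Lemma~\ref{Lem:FSt}, and resolve the density obstacle for the $M$ piece exactly as in the paper via the Galerkin approximants, the truncation $\boldW_\lambda$ monotonicity trick, weak $L^1$ convergence of $\boldA\cdot(\boldxi+\nabla\boldw^k)$, equi-integrability and Vitali; \ref{AhT} and \ref{AhTh} likewise coincide with the paper's arguments. One small improvement on your side: for \ref{AhT} you explicitly note that vanishing of the integral would force $\boldxi_1+\nabla\boldw_1=\boldxi_2+\nabla\boldw_2$ a.e.\ and hence $\boldxi_1=\boldxi_2$ after averaging over $Y$, a step the paper's proof leaves implicit (and it cites \ref{ATh} where \ref{AF} is clearly meant).
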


\section{Proof of Theorem~\ref{Thm:Main}}\label{SS3}
\subsection{Setting of the problem}
We start this section by formulating and proving some lemmas that will be used in the proof of Theorem~\ref{Thm:Main} which appears in subsection~\ref{SubSec:MainThmProof}.
Let us outline next steps. First, we derive estimates of a weak solution $\ueps$~of~\eqref{StudPr} and corresponding $\Aeps(x):=\boldA\left(\frac{x}{\varepsilon},\nabla\ueps\right)$ that are uniform with respect to $\varepsilon\in (0,1)$. Then we extract a sequence $\{\boldu^{\varepsilon_k}\}_{k=1}^\infty$ such that $\{\nabla \boldu^{\varepsilon_k}\}_{k=1}^\infty$ converges weakly$^*$ to some $\nabla \boldu$ in $L^{m_1}(\Omega;\eR^{d\times N})$ and a weakly$^*$ convergent sequence $\{\boldA^{\varepsilon_k}\}_{k=1}^\infty$ with a limit $\bar\boldA\in L^{m_2^*}(\Omega;\eR^{d\times N})$. Then we show that the sequence $\{\nabla \boldu^{\varepsilon_k}\}_{k=1}^\infty$ converges weakly$^*$ two--scale to $\nabla \boldu+\boldU$ in $L^{m_1}(\Omega\times Y;\eR^{d\times N})$ and $\{\boldA^{\varepsilon_k}\}_{k=1}^\infty$ converges weakly$^*$ two--scale to $\boldA^0$ in $L^{m_2^*}(\Omega\times Y;\eR^{d\times N})$. Consequently, we apply the weak$^*$ two--scale semicontinuity of convex functionals to improve the regularity of limit functions, i.e., we obtain $\nabla \boldu\in L^f(\Omega;\eR^{d\times N})$ and $\bar\boldA=\int_Y\boldA^0\in L^{f^*}(\Omega;\eR^{d\times N})$. This ensures that $\int_\Omega\bar\boldA\cdot\nabla \boldu\dx$ is meaningful. Then we employ a variant of the Minty trick for nonreflexive function spaces to identify the limit $\bar\boldA$.

First, we formulate the lemma concerning the existence and uniqueness of a solution to problem~\eqref{StudPr} for an arbitrary but fixed $\varepsilon$. The detailed proof in case~\ref{MTw}  or \ref{EMD} is stated in the appendix, see Theorem~\ref{Thm:WSExistMHoeldC}. For the existence proof under condition~\ref{MTw2} we refer to \cite{GMW12}. We denote $M^\varepsilon(x,\boldxi)=M\left(\frac{x}{\varepsilon},\boldxi\right)$.
\begin{Lemma}\label{Lem:WSExist}
 Let $\Omega\subset\Rd$ be a bounded domain, the operator $\boldA$ satisfy \ref{AO}--\ref{ATh} and the $\mathcal{N}$--function~$M$ satisfy \ref{MO}--\ref{M4} and one of ~\ref{MTw}--\ref{EMD} hold. Then for fixed $\varepsilon\in(0,1)$ there exists a unique weak solution of problem~\eqref{StudPr}, which is a function $\ueps\in V_0^{M^\varepsilon}$ such that
\begin{equation}\label{WFEpsPr}
	\int_{\Omega} \boldA\left(\frac{x}{\varepsilon},\nabla\ueps(x)\right)\cdot\nabla\bphi(x)\dx=\int_\Omega \boldF(x)\cdot\nabla\bphi(x)\dx \qquad \textrm{ for all } \bphi\in V_0^{M^\varepsilon}.
\end{equation}
\end{Lemma}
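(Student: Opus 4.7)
The plan is to follow a Galerkin approximation combined with a Young-measure / Minty argument adapted to nonreflexive Musielak--Orlicz spaces, and to close the argument using the modular density results of Lemma~\ref{Thm:ModDensity}, which is precisely what the three alternative conditions~\ref{MTw}--\ref{EMD} are designed to supply.

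\textbf{Step 1 (Galerkin approximation).} Fix $\varepsilon\in(0,1)$ and choose a countable linearly dense family $\{\bpsi_j\}\subset C^\infty_c(\Omega;\eR^N)$. For each $n$ seek $\boldu^n=\sum_{j=1}^n c_j^n\bpsi_j$ solving the finite-dimensional system $\int_\Omega\boldA(x/\varepsilon,\nabla\boldu^n)\cdot\nabla\bpsi_j\dx=\int_\Omega\boldF\cdot\nabla\bpsi_j\dx$ for $j=1,\dots,n$. Existence of $\boldu^n$ follows from Brouwer's fixed point theorem using the coercivity provided by~\ref{ATh}.

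\textbf{Step 2 (a priori estimates).} Testing by $\boldu^n$ and using~\ref{ATh}, the Fenchel--Young inequality and~\eqref{Assumption:F} yields the uniform bound $\int_\Omega M(x/\varepsilon,\nabla\boldu^n)+M^*(x/\varepsilon,\boldA(x/\varepsilon,\nabla\boldu^n))\dx\le C$. Via~\ref{MTwo} this also gives uniform boundedness of $\boldu^n$ in $W^{1,1}_0(\Omega;\eR^N)$. Extract a subsequence such that $\nabla\boldu^n\WSCon\nabla\boldu$ in $L^{M^\varepsilon}(\Omega;\Rdd)$ and $\boldA(\cdot/\varepsilon,\nabla\boldu^n)\WSCon\bar\boldA$ in $L^{M^{\varepsilon *}}(\Omega;\Rdd)$ in the $(E^{M^{\varepsilon*}},L^{M^\varepsilon})$ and $(E^{M^\varepsilon},L^{M^{\varepsilon *}})$ dualities, respectively. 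Passing to the limit in the Galerkin identity gives $\int_\Omega\bar\boldA\cdot\nabla\bpsi_j\dx=\int_\Omega\boldF\cdot\nabla\bpsi_j\dx$ for every~$j$.

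\textbf{Step 3 (density, Minty, identification).} The central obstacle is that $\boldu$ itself is not an admissible test function through norm density, since $C^\infty_c$ is in general not norm-dense in $V_0^{M^\varepsilon}$ when $M$ fails $\Delta_2$ or $\nabla_2$. This is circumvented by Lemma~\ref{Thm:ModDensity}: under~\ref{MTw} one uses assertion~\ref{MDTw}; under~\ref{MTw2} one first truncates (Lemma~\ref{Lem:ModConvGrTrunc}) and applies~\ref{MDO}; under~\ref{EMD} one uses~\ref{MDOMB}. In each case there exists $\{\bpsi^k\}\subset C^\infty_c(\Omega;\eR^N)$ with $\nabla\bpsi^k\ModConvM\nabla\boldu$. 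Modular convergence together with uniform equi-integrability of $\{M^\varepsilon(\cdot,\lambda\nabla\bpsi^k)\}$ (Lemmas~\ref{Lem:MConvEquiv}--\ref{Lem:UnifIntegr}) permits passage to the limit in $\int_\Omega\bar\boldA\cdot\nabla\bpsi^k\dx$ and $\int_\Omega\boldF\cdot\nabla\bpsi^k\dx$, giving the key energy identity $\int_\Omega\bar\boldA\cdot\nabla\boldu\dx=\int_\Omega\boldF\cdot\nabla\boldu\dx$. Combined with the Galerkin energy bound $\limsup_n\int_\Omega\boldA(x/\varepsilon,\nabla\boldu^n)\cdot\nabla\boldu^n\dx=\int_\Omega\boldF\cdot\nabla\boldu\dx$, we are in position to run the Young-measure / Minty argument used in Lemma~\ref{Lem:cellex}: strict monotonicity~\ref{AF} forces the Young measure generated by $\{\nabla\boldu^n\}$ to be the Dirac mass $\delta_{\nabla\boldu(x)}$, whence $\bar\boldA(x)=\boldA(x/\varepsilon,\nabla\boldu(x))$ a.e. Using modular density once more extends the weak formulation~\eqref{WFEpsPr} from the dense set $\{\bpsi_j\}$ to all $\bphi\in V_0^{M^\varepsilon}$.

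\textbf{Step 4 (uniqueness).} If $\boldu_1,\boldu_2$ are two solutions, subtracting the two weak formulations and using the modular approximation of $\boldu_1-\boldu_2$ as an admissible test function (after truncation, by Lemma~\ref{Lem:ModConvGrTrunc} and Lemma~\ref{Thm:ModDensity}), the strict monotonicity~\ref{AF} forces $\nabla\boldu_1=\nabla\boldu_2$ a.e., hence $\boldu_1=\boldu_2$ by the Poincar\'e inequality. The hard step is unambiguously Step~3: the entire reason for the three alternative geometric/embedding hypotheses~\ref{MTw}--\ref{EMD} is to supply the modular-topology density of $C^\infty_c$, without which neither the limit identification nor uniqueness can be closed in the present nonreflexive setting.
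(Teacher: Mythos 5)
Your proof is correct in substance and, for the identification and uniqueness parts (Steps 3--4), follows essentially the same path as the paper. The genuine difference lies in Step 1: the paper does not run a Galerkin scheme directly. Instead, in Theorem~\ref{Thm:WSExistMHoeldC} it introduces a regularized operator $\boldB(x,\boldzeta):=\boldA(x,\boldzeta)+\delta\NabOv m(\boldzeta)$ with $\NabOv m(\boldzeta)=\tilde m'(|\boldzeta|)\boldzeta/|\boldzeta|$, where $\tilde m$ is chosen so that $\tilde m^*$ satisfies the $\Delta_2$-condition and $\tilde m(|\boldzeta|)\ge\sup_{x}M(x,\boldzeta)$; existence of the $\delta$-approximation then comes from the theory of $(S_m)$-class operators of~\cite{MT99}, after which the paper derives $\delta$-uniform modular bounds and passes $\delta\to 0$ with a Young-measure / Minty argument. (For case~\ref{MTw2} the paper simply cites~\cite{GMW12}.) Your direct Galerkin--Brouwer construction on $\operatorname{span}\{\bpsi_1,\dots,\bpsi_n\}\subset C^\infty_c(\Omega;\eR^N)$ is a valid and, arguably, more self-contained alternative: it avoids the auxiliary $\mathcal N$-function $\tilde m$ and the external $(S_m)$-class result, since finite-dimensionality, the $L^\infty$-bound on $\nabla\boldu^n$, Lemma~\ref{Lem:OperReg} and the superlinearity of $m_1$ already give continuity and coercivity for Brouwer. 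The price you pay is that all the compactness machinery must be verified directly on the Galerkin sequence, whereas the paper inherits some of it from~\cite{MT99}. Once the approximate sequence and the modular a priori bounds are in place, both proofs converge: weak$^*$ compactness in $L^{M^\varepsilon}$ and $L^{(M^\varepsilon)^*}$, the modular density supplied by Lemma~\ref{Thm:ModDensity} (which is precisely where the trichotomy~\ref{MTw}--\ref{EMD} enters, as you correctly observe), the energy identity $\int_\Omega\bar\boldA\cdot\nabla\boldu=\int_\Omega\boldF\cdot\nabla\boldu$, and the Young-measure / strict-monotonicity identification $\bar\boldA=\boldA(\cdot/\varepsilon,\nabla\boldu)$.

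Two small remarks. First, in your uniqueness step the extra modular approximation of $\boldu_1-\boldu_2$ is unnecessary: once~\eqref{WFEpsPr} is established for all $\bphi\in V_0^{M^\varepsilon}$, the difference $\boldu_1-\boldu_2\in V_0^{M^\varepsilon}$ is itself an admissible test function, exactly as in Step 5 of Theorem~\ref{Thm:WSExistMHoeldC}. Second, note that the uniqueness argument in fact requires the strict monotonicity~\ref{AF}, which is not listed among the hypotheses of Lemma~\ref{Lem:WSExist} (the statement only invokes \ref{AO}--\ref{ATh}); this is an imprecision in the paper's statement rather than a gap in your proof, since Theorem~\ref{Thm:WSExistMHoeldC} does assume~\ref{AF}.
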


\begin{Lemma}\label{Lem:AprBound}
	Let the assumptions of Lemma~\ref{Lem:WSExist} be satisfied and $\ueps$ be a weak solution of problem~\eqref{StudPr}.
	Then $\{\Aeps\}_{0<\varepsilon< 1}$ is bounded in $L^{m_2^*}(\Omega;\eR^{d\times N})$ and $\{\ueps\}_{0<\varepsilon< 1}$ is bounded in $V_0^{m_1}$ and we have the estimate
\begin{equation}\label{AEIneq}
			\int_{\Omega}\frac{1}{2} m_1(|\nabla\ueps|)+m_2^*(|\Aeps|)\dx\leq c\int_{\Omega} M^\varepsilon\left(x,\nabla\ueps\right)+(M^\varepsilon)^*\left(x,\Aeps\right)\dx\leq C(\|\boldF\|_{\infty}, m_1^*).
		\end{equation}

	\begin{proof} We set $\bphi:=\ueps$ in~\eqref{WFEpsPr} to obtain
		\begin{equation}\label{WeakFormTestSol}
			\int_{\Omega}\Aeps\cdot\nabla\ueps \dx=\int_{\Omega}\boldF\cdot\nabla\ueps\dx.
		\end{equation}
		Using~\eqref{WeakFormTestSol}, \ref{ATh}, the Young inequality, the convexity of $M$ and the fact that the constant $c\leq 1$, which is an obvious consequence of the Young inequality, it follows that
		\begin{equation*}
			c\int_{\Omega} M^\varepsilon(x,\nabla\ueps)+(M^\varepsilon)^*(x,\Aeps)\dx\leq \int_{\Omega}(M^\varepsilon)^*\left(x,\frac{2}{c}\boldF\right)+\frac{c}{2}M^\varepsilon(x,\nabla\ueps)\dx.
		\end{equation*}
		Consequently, employing~\ref{MTwo} we obtain
		\begin{equation*}
			c\int_{\Omega}\frac{1}{2} m_1(|\nabla\ueps|)+m_2^*(|\Aeps|)\dx\leq c\int_{\Omega}\frac{1}{2} M^\varepsilon\left(x,\nabla\ueps\right)+(M^\varepsilon)^*\left(x,\Aeps\right)\dx\leq \int_{\Omega}m_1^*\left(\frac{2}{c}|\boldF|\right)\dx.
		\end{equation*}
		Due to~\eqref{Assumption:F} the integral on the right hand side is finite and the desired conclusion~\eqref{AEIneq} follows.
	\end{proof}
\end{Lemma}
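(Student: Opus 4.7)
The plan is a standard monotone-operator energy estimate adapted to the Musielak--Orlicz setting, exploiting the fact that the solution itself is admissible as a test function. I would begin by inserting $\bphi:=\ueps$ into the weak formulation \eqref{WFEpsPr}, which is legitimate because $\ueps\in V_0^{M^\varepsilon}$. This collapses the whole problem into the single identity
\begin{equation*}
\int_\Omega \Aeps\cdot\nabla\ueps\dx=\int_\Omega \boldF\cdot\nabla\ueps\dx.
\end{equation*}

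Next, I would estimate the left-hand side from below pointwise via the coercivity condition \ref{ATh}, giving the lower bound $c\int_\Omega M^\varepsilon(x,\nabla\ueps)+(M^\varepsilon)^*(x,\Aeps)\dx$. For the right-hand side, I would apply the Young inequality for $\mathcal{N}$-functions with a scaling tailored to absorption: writing $\boldF\cdot\nabla\ueps=(2\boldF/c)\cdot(c\,\nabla\ueps/2)$ yields the upper bound $(M^\varepsilon)^*(x,2\boldF/c)+M^\varepsilon(x,c\,\nabla\ueps/2)$. Since $c\le1$ (itself a consequence of Young applied to \ref{ATh}), convexity of $M^\varepsilon$ together with $M^\varepsilon(x,0)=0$ gives $M^\varepsilon(x,c\,\nabla\ueps/2)\le(c/2)M^\varepsilon(x,\nabla\ueps)$, which can then be moved to the left of the coercivity estimate, leaving the clean bound
\begin{equation*}
\tfrac{c}{2}\int_\Omega M^\varepsilon(x,\nabla\ueps)\dx+c\int_\Omega (M^\varepsilon)^*(x,\Aeps)\dx\le \int_\Omega (M^\varepsilon)^*(x,2\boldF/c)\dx.
\end{equation*}

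After this absorption step, I would invoke the sandwich condition \ref{MTwo}, noting that taking Young conjugates reverses inequalities, so $m_2^*\le M^*\le m_1^*$ pointwise. This lets me replace $M^\varepsilon$ and $(M^\varepsilon)^*$ by the spatially independent envelopes $m_1$ and $m_2^*$ on the left, and bound the right-hand side by $\int_\Omega m_1^*(2|\boldF|/c)\dx$. The assumption \eqref{Assumption:F} that $\boldF\in L^\infty(\Omega;\eR^{d\times N})$ then makes this integral finite, with a bound depending only on $\|\boldF\|_\infty$ and $m_1^*$, which yields the claimed chain of inequalities \eqref{AEIneq}. The resulting modular boundedness of $\nabla\ueps$ with respect to $m_1$ and of $\Aeps$ with respect to $m_2^*$ translates into norm boundedness in $L^{m_1}(\Omega;\eR^{d\times N})$ and $L^{m_2^*}(\Omega;\eR^{d\times N})$ by the standard relationship between modulars and Luxemburg norms in Musielak--Orlicz spaces.

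The only genuinely delicate step I anticipate is the choice of splitting in Young's inequality: one must ensure that the $M^\varepsilon(x,\nabla\ueps)$ piece produced by Young carries a coefficient strictly smaller than the coercivity coefficient $c$, which is precisely what the convexity bound $M^\varepsilon(x,\lambda v)\le \lambda M^\varepsilon(x,v)$ for $\lambda\in[0,1]$ is designed to supply. Everything else is purely pointwise and uses only the Young inequality, convexity of $M$, and the two-sided envelope \ref{MTwo}; the inhomogeneity and anisotropy of $M$ play no role here since the estimate is carried out before any passage to the limit $\varepsilon\to0$.
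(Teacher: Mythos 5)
Your proposal is correct and follows essentially the same route as the paper's proof: test the weak formulation with $\ueps$ itself, apply the coercivity condition \ref{ATh} on the left, use the Young inequality with the scaling $\boldF\cdot\nabla\ueps=(2\boldF/c)\cdot(c\nabla\ueps/2)$ together with convexity and $c\le1$ to absorb the gradient term, and finally invoke the envelope condition \ref{MTwo} (with the order-reversal under conjugation) and $\boldF\in L^\infty$. The only difference is cosmetic presentation of the absorption step; the estimates and their justification coincide.
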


\begin{Lemma}\label{spanek}
Let the assumptions of Lemma~\ref{Lem:WSExist} be satisfied. In addition and $\ueps$ be a weak solution of problem~\eqref{StudPr} and $\{\varepsilon_j\}_{j=1}^\infty$ be an arbitrary sequence such that $\varepsilon_j\rightarrow 0$ as $j\rightarrow\infty$.
	Then there is a subsequence $\{\varepsilon_{j_k}\}_{k=1}^\infty$, functions $\boldu\in V_0^{m_1}$, $\boldU\in L^{m_1}(\Omega\times Y;\eR^{d\times N})$, $\bar\boldA\in L^{m_2^*}(\Omega;\eR^{d\times N})$ and $\boldA^0\in L^{m_2^*}(\Omega\times Y;\eR^{d\times N})$ such that as $k\rightarrow\infty$ we have the following weak convergence results (the sequences are denoted by $k$ and not by $\varepsilon_{j_k}$ for simplicity)
	\begin{equation}\label{WSConv}
		\begin{alignedat}{2}
			\boldu^k&\WSCon \boldu&&\text{ in }L^{m_1}(\Omega;\eR^N),\\
			\nabla\boldu^k&\WSCon\nabla\boldu&&\text{ in }L^{m_1}(\Omega;\eR^{d\times N}),\\
			\boldA^{k}&\WSCon\bar\boldA&&\text{ in }L^{m_2^*}(\Omega;\eR^{d\times N})
		\end{alignedat}
	\end{equation}
	and the weak$^*$ two--scale convergence results
	\begin{equation}\label{WTSC}
		\begin{alignedat}{2}
		 \nabla\uk&\WTSSCon\nabla \boldu+\boldU &&\text{ in }L^{m_1}(\Omega\times Y;\eR^{d\times N}),\\
		 \boldAk&\WTSSCon\boldA^0 &&\text{ in }L^{m^*_2}(\Omega\times Y;\eR^{d\times N}).
		\end{alignedat}
	\end{equation}
	Moreover,
	for a.a. $x\in\Omega$
	\begin{align}
		\boldU(x,\cdot)&\in \{\nabla \boldw:\boldw\in V_{per}^M\},\label{BUReg}\\
		\boldA^0&(x,\cdot)\in G(Y)^\bot\label{AzDual},\\
		\int_Y \boldA^0(x&,y)\cdot\boldU(x,y)\dy=0.\label{AzOrtPr}
	\end{align}
	Furthermore,
	\begin{align}		
		\boldu\in V_0^f(&\Omega;\eR^N),\label{LimRegU}\\
		\bar\boldA=\int_Y&\boldA^0\dy,\label{AZAv}\\
		\bar\boldA\in L^{f^*}(&\Omega;\eR^{d\times N})\label{bAReg}
	\end{align}
	where $f$ is given by~\eqref{DEF:f} and $f^*$ by~\eqref{FSt}.
	The function $\bar\boldA$ satisfies
	\begin{equation}\label{WFLim}
		\int_\Omega\bar\boldA\cdot\nabla\bphi=\int_\Omega \boldF\cdot\nabla\bphi
	\end{equation}
	for all $\bphi\in C^\infty_c\left(\Omega;\eR^N\right)$.
	\begin{proof}
	The convergences~in~\eqref{WSConv} are a direct consequence of the uniform estimates from Lemma~\ref{Lem:AprBound} and the Poincar\'e type inequality, c.f. \cite[Section 2.4]{G79}.
		The convergence~\eqref{WTSC}$_1$ is a consequence of~\eqref{WSConv}$_1$ and Lemma~\ref{Lem:Facts2S}~\ref{F2SSi}, which also yields for almost all $x\in \Omega$
		\begin{equation}\label{OrtProp}
			\int_Y\boldU(x,y)\cdot\bpsi(y)\dy=0 \qquad \textrm{ for all } \bpsi\in C^\infty_{per}(Y;\eR^{d\times N}), \div \bpsi=0,
		\end{equation}
		whereas~\eqref{WTSC}$_2$ follows by Lemma~\ref{Lem:Facts2S}~\ref{F2SFi} due to Lemma~\ref{Lem:AprBound}. Moreover, \eqref{AzOrtPr} follows from Lemma~\ref{Lem:Facts2S}~\ref{F2SS}, \eqref{WSConv}$_2$ and \eqref{WTSC}$_2$.

The convergence result~\eqref{WTSC}$_1$ and the uniqueness of weak$^*$ limit, the weak lower semicontinuity stated in Lemma~\ref{Lem:Facts2S}~\ref{F2SSi} and the uniform estimate~\eqref{AEIneq} imply
	\begin{equation}\label{LimIneq}
		\int_\Omega\int_Y M(y,\nabla \boldu +\boldU)+M^*(y,\boldA^0)\dy\dx\le\liminf_{k\rightarrow\infty}\int_{\Omega}  M^{\varepsilon_k}(x,\nabla \uk (x))+(M^*)^{\varepsilon_k}(x,\boldA^{k}(x)) \dx <\infty.
	\end{equation}

We obtain from~\eqref{LimIneq} the existence of a measurable set $\bar S\subset\Omega$ such that $|\Omega\setminus \bar S|=0$ and for all $x\in \bar S$ $\int_Y M(y,\nabla \boldu(x)+\boldU(x,y))\dy<\infty$, which implies $\boldU(x,\cdot)\in L^M(Y;\eR^{d\times N})$. In addition, it follows from~\eqref{OrtProp} that there exists $\boldw(x,\cdot) \in W^{1,1}_{per}(Y;\eR^N)$ such that $\nabla_y \boldw(x,y)=\boldU(x,y)$. Therefore the estimate~\eqref{LimIneq} gives $\nabla_y \boldw(x,\cdot)\in L^M(Y;\eR^{d\times N})$. Accordingly, we have that $\boldw(x,\cdot)\in V_{per}^M$. Thus by Lemma~\ref{Lem:infjemin} and the definition of function $f$, see~\eqref{DEF:f}, we conclude
\begin{equation*}
	\int_Y M(y,\nabla \boldu(x)+\nabla_y \boldw(x,y)) \dy\geq \inf_{\boldv\in W^1_{per}E^M(Y;\eR^N)} \int_{Y} M(y,\nabla \boldu(x) + \boldv(y))\dy = f(\nabla \boldu(x)).
\end{equation*}
Hence, integrating the result with respect to $x$ over $\Omega$ and using the estimate~\eqref{LimIneq}, we obtain~\eqref{LimRegU}.

In order to show~\eqref{AzDual} we choose $z\in C^\infty_c(\Omega)$ and $\bpsi\in C^\infty_{per}\left(Y;\eR^N\right)$ and set $\bphi(x):=\varepsilon z(x)\bpsi\left(\frac{x}{\varepsilon}\right)$ in~\eqref{WFEpsPr}. Utilizing~\eqref{WTSC}$_2$ and $Y-$periodicity of $\bpsi$ we arrive at
\begin{equation*}
\begin{split}
		&\int_\Omega\int_Y \boldA^0(x,y)\cdot z(x)\nabla\bpsi(y)\dy\dx = \lim_{k\to \infty}\int_\Omega \boldA^k(x)\cdot z(x)\nabla_y\bpsi\left(\frac{x}{\varepsilon_{j_k}}\right)\dx\\
&=\lim_{k\to \infty}\int_\Omega \boldA^k(x)\cdot \nabla\left( \varepsilon_{j_k}z(x)\bpsi\left(\frac{x}{\varepsilon_{j_k}}\right)\right)\dx-\lim_{k\to \infty}\int_\Omega \boldA^k(x)\cdot \left(\varepsilon_{j_k}\nabla z(x) \otimes \bpsi\left(\frac{x}{\varepsilon_{j_k}}\right)\right)\dx\\
&=\int_\Omega \boldF(x) \cdot z(x)\int_Y \nabla_y\bpsi(y)\dy\dx=0,
\end{split}	
\end{equation*}
which implies that there is a measurable set $\tilde S\subset\Omega$, $|\Omega\setminus\tilde S|=0$ such that for all $x\in\tilde S$
	\begin{equation}\label{BarBAId}
		\int_Y\boldA^0(x,y)\cdot\nabla_y\bpsi(y)\dy=0.
	\end{equation}
Using Theorem~\ref{Thm:ModDensity} we can find for any $\bpsi\in W^1_{per}E^M\left(Y;\eR^N\right)$ a sequence $\{\bpsi^k\}_{k=1}^\infty\subset C^\infty_{per}\left(Y;\eR^N\right)$ such that $\nabla\bpsi^k\ModConvM \nabla\bpsi$. Next, we observe that $\boldA^0(x,\cdot)\in L^{M^*}_{per}\left(Y;\eR^N\right)$ for almost all $x\in \Omega$ due to~\eqref{LimIneq}. Then we set $\bpsi=\bpsi^k$ in~\eqref{BarBAId} and employing Lemma~\ref{Lem:ProdConv} we perform the limit passage $k\rightarrow\infty$ to get~\eqref{BarBAId} for any $\bpsi\in W^1_{per}E^M\left(Y;\eR^N\right)$, which implies~\eqref{AzDual}. In a very similar manner,  we use the approximation of $\boldU(x,\cdot)=\nabla_y \boldw(x,\cdot)$ in the modular topology of $L^M_{per}(Y;\eR^{d\times N})$ to conclude~\eqref{AzOrtPr} from~\eqref{BarBAId}.

Using the expression~\eqref{FSt} for $f^*$, the estimate~\eqref{LimIneq}, \eqref{AzDual} and~\eqref{AZAv}, we get
\begin{equation*}
	\int_\Omega f^*(\bar\boldA(x))\dx\leq \int_\Omega\int_Y M^*(y,\boldA^0(x,y))\dy\dx
<\infty,
\end{equation*}
which is~\eqref{bAReg}.

The identity~\eqref{WFLim} is obtained by performing the limit passage $k\rightarrow \infty$ in~\eqref{WFEpsPr} with $\varepsilon=\varepsilon_{j_k}$ using convergence~\eqref{WSConv}$_2$.
	\end{proof}
\end{Lemma}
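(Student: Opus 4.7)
The plan is to build up the convergences and regularity of the limits in layers, starting from the $\varepsilon$-uniform bounds in Lemma~\ref{Lem:AprBound}. From those bounds and the Banach--Alaoglu theorem on $L^{m_1}(\Omega;\eR^{d\times N})$ and $L^{m_2^*}(\Omega;\eR^{d\times N})$, together with the Poincar\'e inequality applied to $\ueps$, I would extract a diagonal subsequence producing the three weak$^*$ convergences \eqref{WSConv}. The two--scale convergences \eqref{WTSC} then follow immediately by invoking Lemma~\ref{Lem:Facts2S}: item \ref{F2SSi} upgrades the weak$^*$ convergence of $\nabla\uk$ to the two--scale form $\nabla\boldu+\boldU$ with the built--in orthogonality of $\boldU$ against divergence--free periodic test fields (in particular, testing with constants gives $\int_Y \boldU\dy=0$), and item \ref{F2SFi} gives the two--scale limit $\boldA^0$ of $\boldAk$. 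Formula \eqref{AZAv} follows by applying \ref{F2SS} to $\boldAk$, identity \eqref{AzOrtPr} comes from pairing \eqref{WTSC}$_2$ against the $y$--independent limit via \ref{F2SS} and using \eqref{WSConv}$_2$, and \eqref{WFLim} is obtained by testing \eqref{WFEpsPr} with a fixed $\bphi\in C^\infty_c(\Omega;\eR^N)$ and using \eqref{WSConv}$_3$.

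The second step is to promote the two--scale limits to the right function spaces. The bound from Lemma~\ref{Lem:AprBound}, rewritten in the form $\int_\Omega M^{\varepsilon_k}(x,\nabla\uk)+(M^*)^{\varepsilon_k}(x,\boldAk)\dx\le C$, combined with the convex, $Y$--periodic, Carath\'eodory integrands $M$ and $M^*$ (which satisfy the hypotheses of \ref{F2SSev}), yields by two--scale lower semicontinuity
\[
\int_\Omega\int_Y M(y,\nabla\boldu(x)+\boldU(x,y))+M^*(y,\boldA^0(x,y))\dy\dx<\infty.
\]
Together with the $\div_y$--free orthogonality of $\boldU$, a fibrewise De Rham argument produces $\boldw(x,\cdot)\in W^{1,1}_{per}(Y;\eR^N)$ with $\nabla_y\boldw=\boldU$, and the $L^M$ bound upgrades this to $\boldw(x,\cdot)\in V^M_{per}$, i.e. \eqref{BUReg}. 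Inserting this $\boldw(x,\cdot)$ as a competitor in the infimum defining $f$ (via Lemma~\ref{Lem:infjemin} to cover the density issue) yields $f(\nabla\boldu(x))\le \int_Y M(y,\nabla\boldu(x)+\boldU(x,y))\dy$ a.e., so integrating gives \eqref{LimRegU}.

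To obtain \eqref{AzDual} I would test \eqref{WFEpsPr} with the oscillating $\bphi^\varepsilon(x)=\varepsilon z(x)\bpsi(x/\varepsilon)$, $z\in C^\infty_c(\Omega)$, $\bpsi\in C^\infty_{per}(Y;\eR^N)$; the gradient splits into an $O(\varepsilon)$ cross term and the $y$--gradient term $z(x)\nabla_y\bpsi(x/\varepsilon)$, and passing to the limit using \ref{F2SFo} gives $\int_\Omega z(x)\int_Y \boldA^0(x,y)\cdot\nabla_y\bpsi(y)\dy\dx=0$. Localising in $x$ and then upgrading the admissible $\bpsi$ from $C^\infty_{per}$ to $W^1_{per}E^M(Y;\eR^N)$ by the modular density in Lemma~\ref{Thm:ModDensity}\ref{MDTh}, together with the product--convergence Lemma~\ref{Lem:ProdConv}, yields \eqref{AzDual}. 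For \eqref{bAReg} I would then combine \eqref{AZAv}, \eqref{AzDual} and the infimum formula \eqref{FSt} for $f^*$: the field $\boldA^0(x,\cdot)$ is admissible in \eqref{FSt} with mean $\bar\boldA(x)$, hence $f^*(\bar\boldA(x))\le \int_Y M^*(y,\boldA^0(x,y))\dy$, and integrating over $\Omega$ is finite by the two--scale semicontinuity bound.

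The principal difficulty I anticipate is the modular density step underpinning \eqref{AzDual}. Because $M$ satisfies neither $\Delta_2$ nor $\nabla_2$, the subspace $E^M$ is strictly smaller than $L^M$ and the duality pairing is not continuous in the norm topology, so one cannot simply extend the orthogonality from smooth periodic gradients to arbitrary elements of $G(Y)$ by routine density. The log--H\"older--type assumption \ref{M4} is precisely what makes Lemma~\ref{Thm:ModDensity}\ref{MDTh} available, and combining it with Lemma~\ref{Lem:ProdConv} lets one pass to the limit in pairings of modularly convergent and weakly$^*$ convergent sequences. Every other step in the plan is a direct application of the two--scale toolbox of Lemma~\ref{Lem:Facts2S} and the standard weak compactness arguments once this density/duality question has been handled.
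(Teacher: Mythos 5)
Your plan reproduces the paper's proof step by step: the weak$^*$ convergences via Lemma~\ref{Lem:AprBound}, Banach--Alaoglu, and Poincar\'e; the two--scale limits via Lemma~\ref{Lem:Facts2S}~\ref{F2SSi} and~\ref{F2SFi}; the identity~\eqref{AZAv} via~\ref{F2SS}; the bound~\eqref{LimIneq} via the two--scale convex semicontinuity~\ref{F2SSev}; the De~Rham/fibrewise gradient structure and the $L^M$ upgrade for~\eqref{BUReg}; the competitor argument with Lemma~\ref{Lem:infjemin} for~\eqref{LimRegU}; the oscillating test function $\varepsilon z\bpsi(\cdot/\varepsilon)$ plus modular density (Lemma~\ref{Thm:ModDensity}~\ref{MDTh}) and Lemma~\ref{Lem:ProdConv} for~\eqref{AzDual}; and the $f^*$ infimum formula~\eqref{FSt} for~\eqref{bAReg}. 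You also correctly identify the modular density step as the crux of the argument given the lack of~$\Delta_2$.

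There is one genuine gap. You claim that~\eqref{AzOrtPr}, the fibrewise orthogonality $\int_Y\boldA^0(x,y)\cdot\boldU(x,y)\dy=0$, ``comes from pairing~\eqref{WTSC}$_2$ against the $y$--independent limit via~\ref{F2SS} and using~\eqref{WSConv}$_2$.'' That combination of ingredients only yields the mean identity~\eqref{AZAv} (averaging the two--scale limit of $\boldA^k$ reproduces the single--scale weak$^*$ limit); it says nothing about the pointwise-in-$x$ duality product of $\boldA^0(x,\cdot)$ and $\boldU(x,\cdot)$. What is actually needed is the conjunction of two facts you establish later: $\boldU(x,\cdot)=\nabla_y\boldw(x,\cdot)$ with $\boldw(x,\cdot)\in V^M_{per}$ (from~\eqref{BUReg}), and the orthogonality of $\boldA^0(x,\cdot)$ to smooth periodic gradients (\eqref{BarBAId}, which underlies~\eqref{AzDual}). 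One then approximates $\nabla_y\boldw(x,\cdot)=\boldU(x,\cdot)$ by $\nabla\bpsi^k$ with $\bpsi^k\in C^\infty_{per}(Y;\eR^N)$ in the modular topology of $L^M_{per}$ (again Lemma~\ref{Thm:ModDensity}~\ref{MDTh}), and passes to the limit in $\int_Y\boldA^0(x,y)\cdot\nabla\bpsi^k(y)\dy=0$ via Lemma~\ref{Lem:ProdConv}, using that $\boldA^0(x,\cdot)\in L^{M^*}_{per}(Y;\eR^{d\times N})$ a.e.\ by~\eqref{LimIneq}. Since you already invoke exactly this modular--density--plus--duality--pairing machinery for~\eqref{AzDual}, the fix is to move the derivation of~\eqref{AzOrtPr} after~\eqref{AzDual} and~\eqref{BUReg} and run the same argument with $\bpsi$ replaced by the approximants of $\boldw(x,\cdot)$; the claim as currently placed in the weak--convergence bookkeeping step does not prove it.
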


The rest of the paper is devoted to the identification of $\bar\boldA$ in~\eqref{WFLim}. Before doing so we state the last auxiliary result.
\begin{Lemma}\label{Lem:OperReg}
	Let the assumption \ref{ATh} hold. Then
	\begin{enumerate}
		\item for any $\boldV\in L^\infty(\Omega\times Y;\eR^{d\times N})$ we have $\boldA(\cdot,\boldV)\in L^\infty(\Omega\times Y;\eR^{d\times N})$,
		\item for any $\boldV\in E^{M_y}(\Omega\times Y;\eR^{d\times N})$ we have $\boldA(\cdot,\boldV)\in E^{M_y^*}(\Omega\times Y;\eR^{d\times N})$ provided~\ref{MTwo} holds.
		\end{enumerate}
	\begin{proof}
		Let us observe that~\ref{ATh} implies
		\begin{equation*}
			|\boldV|\geq c\frac{M^*(\cdot,\boldA(\cdot,\boldV))}{|\boldA(\cdot,\boldV)|}.
		\end{equation*}
		Assume that $\boldV\in L^\infty\left(\Omega\times Y;\eR^{d\times N}\right)$ and $\|\boldA(\cdot,\boldV)\|_{L^\infty}=\infty$, i.e., for any $K>0$ there is a set $S_K\subset\Omega\times Y$, $|S_K|>0$ such that $|\boldA(\cdot,\boldV(\cdot,\cdot))|> K$ on $S_K$. Since $M^*$ is an $N-$function, for any $L>0$ there is $K_L>0$ such that we have $|\boldV|\geq c\frac{M^*(y,\boldA(y,\boldV)}{|\boldA(y,\boldV)|}>L$ on $S_{K_L}$ with $|S_{K_L}|>0$, which contradicts $\boldv\in L^\infty(\Omega\times Y;\eR^{d\times N})$.\\
		By~\ref{ATh} and the Young inequality we obtain for any $t\geq 0$ and $\boldV\in E^{M_y}\left(\Omega\times Y;\eR^{d\times N}\right)$ that
		\begin{equation*}
		\begin{split}
		c&\int_\Omega\int_Y M(y,\boldV)+M^*(y,t\boldA(y,\boldV))\dy\dx\leq \int_\Omega\int_Y t\boldA(y,\boldV))\cdot\boldV\dy\dx\\&\leq c\int_\Omega\int_Y M\left(y,\frac{2}{c}\boldV\right)+\frac{c}{2}M^*(y,t\boldA(y,\boldV))\dy\dx.
		\end{split}
		\end{equation*}
		Hence we infer $\int_\Omega\int_Y M^*(y,t\boldA(y,\boldV))\dy\dx\leq {2}\int_\Omega\int_Y M\left(y,\frac{2}{c}\boldV\right)\dy\dx$ and the latter integral is finite by Lemma~\ref{Lem:EMChar}. We note that~\eqref{MLocBound} holds since we assume~\ref{MTwo}. We also utilize Lemma~\ref{Lem:EMChar} to conclude that $\boldA(y,\boldV)\in E^{M^*_y}\left(\Omega\times Y;\eR^{d\times N}\right)$.
	\end{proof}
\end{Lemma}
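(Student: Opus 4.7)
The two assertions share a common source: the coercivity part of \ref{ATh} controls $M^*(y,\boldA(y,\cdot))$ pointwise by the duality pairing, while \ref{MTwo} pins the anisotropic $M$ between scalar $\mathcal{N}$--functions $m_1,m_2$. The first item is essentially a pointwise statement, while the second is a modular statement that reduces to \ref{ATh} combined with the Fenchel--Young inequality and Lemma~\ref{Lem:EMChar}.

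For the first item, I start from the observation that \ref{ATh} yields the pointwise bound
\begin{equation*}
|\boldV|\,|\boldA(y,\boldV)|\;\ge\;\boldA(y,\boldV)\cdot\boldV\;\ge\;c\,M^*(y,\boldA(y,\boldV)),
\end{equation*}
so on the set where $\boldA(y,\boldV)\neq \bzero$ one has $|\boldV|\ge c\,M^*(y,\boldA(y,\boldV))/|\boldA(y,\boldV)|$. By \ref{MTwo}, conjugation of the bound $M(y,\boldxi)\le m_2(|\boldxi|)$ gives $M^*(y,\boldeta)\ge m_2^*(|\boldeta|)$, so the same ratio is bounded below by $c\,m_2^*(|\boldA(y,\boldV)|)/|\boldA(y,\boldV)|$. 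Since $m_2^*$ is an $\mathcal{N}$--function, $m_2^*(t)/t\to\infty$ as $t\to\infty$. Arguing by contradiction, if $\boldA(\cdot,\boldV)$ were not essentially bounded, then for every $K>0$ there would exist a set $S_K\subset\Omega\times Y$ of positive measure on which $|\boldA(\cdot,\boldV)|>K$; picking $K$ large enough so that $c\,m_2^*(K)/K>\|\boldV\|_{L^\infty}$ would contradict $\boldV\in L^\infty(\Omega\times Y;\eR^{d\times N})$.

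For the second item, by Lemma~\ref{Lem:EMChar} it suffices to show the modular bound $\int_\Omega\!\int_Y M^*(y,t\boldA(y,\boldV))\,\dy\,\dx<\infty$ for every $t>0$. Fix such a $t$ and combine \ref{ATh} multiplied by $t$ with the Fenchel--Young inequality
\begin{equation*}
t\,\boldA(y,\boldV)\cdot\boldV\;\le\;M\!\Bigl(y,\tfrac{2}{c}\boldV\Bigr)+M^*\!\Bigl(y,\tfrac{tc}{2}\boldA(y,\boldV)\Bigr),
\end{equation*}
using the convexity inequality $M^*(y,\tfrac{c}{2}\boldxi)\le\tfrac{c}{2}M^*(y,\boldxi)$, which is valid since one may (and does) assume $c\le 1$. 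The resulting $M^*(y,t\boldA)$ contribution on the right-hand side can then be absorbed into the left, producing
\begin{equation*}
\int_{\Omega}\!\int_{Y}M^*\bigl(y,t\boldA(y,\boldV)\bigr)\,\dy\,\dx\;\le\;C\int_{\Omega}\!\int_{Y}M\!\Bigl(y,\tfrac{2}{c}\boldV\Bigr)\,\dy\,\dx.
\end{equation*}
Since $\boldV\in E^{M_y}(\Omega\times Y;\eR^{d\times N})$, a second application of Lemma~\ref{Lem:EMChar} guarantees that the right-hand side is finite, and the claim follows.

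\textbf{Main obstacle.} The only nontrivial bookkeeping appears in the second item: one must choose the scaling in the Young inequality so that, after invoking $M^*(y,\alpha\boldxi)\le\alpha M^*(y,\boldxi)$ for $\alpha\in(0,1]$, a strict positive multiple of $M^*(y,t\boldA)$ remains on the left to be absorbed; the constant $c\le1$ from \ref{ATh} is what makes this absorption work. Once the modular bound is in place, both conclusions reduce to the characterization of $L^\infty$ and $E^{M^*_y}$ supplied respectively by the pointwise contradiction argument and by Lemma~\ref{Lem:EMChar}.
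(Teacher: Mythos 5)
Your treatment of the first item is correct and, in one respect, tighter than the paper's: by invoking \ref{MTwo} you reduce to the spatially independent $m_2^*$, so the superlinearity $m_2^*(t)/t\to\infty$ holds uniformly, whereas the paper's contradiction argument works directly with $M^*(y,\cdot)$ and tacitly needs the superlinear growth to be uniform in $y$. Do note that item~1 of the lemma does not list \ref{MTwo} among its hypotheses, so strictly speaking you prove a slightly weaker statement; this is harmless here because \ref{MTwo} is in force everywhere the lemma is applied.

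The second item, however, has a genuine gap in the absorption step, and it is present both in your proposal and in the paper's own proof. Multiplying \ref{ATh} by $t$ gives $tc\,M^*(y,\boldA(y,\boldV))$ on the left, while Fenchel--Young and convexity put $\tfrac{c}{2}M^*(y,t\boldA(y,\boldV))$ on the right. For $t\le 1$ one has $M^*(y,t\boldeta)\le t\,M^*(y,\boldeta)$ and the absorption works, but for $t>1$ the superlinearity of a convex function gives the reverse inequality $M^*(y,t\boldeta)\ge t\,M^*(y,\boldeta)$, so the right-hand $\tfrac{c}{2}M^*(y,t\boldA)$ can strictly dominate $tc\,M^*(y,\boldA)$ and nothing positive survives on the left. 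The claimed bound $\int_\Omega\int_Y M^*(y,t\boldA(y,\boldV))\dy\dx\le C\int_\Omega\int_Y M(y,\tfrac{2}{c}\boldV)\dy\dx$, being uniform in $t$, cannot hold as stated: the left side diverges as $t\to\infty$ whenever $\boldA(\cdot,\boldV)\not\equiv 0$. What the argument really yields is the $t=1$ modular estimate, i.e.\ $\boldA(\cdot,\boldV)\in\mathcal{L}^{M^*}(\Omega\times Y)$, which is strictly weaker than membership in $E^{M^*_y}$ and does not meet the hypothesis of Lemma~\ref{Lem:EMChar}. You have therefore faithfully reproduced the paper's reasoning rather than introduced a new mistake; I also note that the paper only ever invokes item~1 of this lemma (in Step~2 of Section~3), so the gap does not propagate to the main theorem.
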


\def\bpsi{\boldsymbol{\psi}}
\def\boldAEpsk{\boldA_{\varepsilon_k}}
\subsection{Identification of the homogenized problem}\label{SubSec:MainThmProof}
In this final part we identify $\bar\boldA$. Through this section we always assume that all assumptions of Lemma~\ref{spanek} are satisfied and we consider the sequence of solutions $\boldu^k$ according to Lemma~\ref{spanek}.\\
\textbf{Step 1}: We show the following identity
\begin{equation}\label{LimIntProd}
	\lim_{k\rightarrow\infty}\int_\Omega \boldA^k\cdot\nabla \uk\dx=\int_\Omega \bar\boldA\cdot\nabla \boldu\dx.
\end{equation}
To show it, we first deduce the validity of the following identity
\begin{equation}\label{HomPrWeakForm}
\int_\Omega \bar{\boldA}\cdot\nabla \boldu\dx=\int_\Omega\boldF\cdot\nabla\boldu\dx.
\end{equation}
If~\ref{MTw} or \ref{EMD} is fulfilled, the according Lemma~\ref{Thm:ModDensity}, we can find a sequence $\{\boldu^n\}_{n=1}^\infty\subset C^\infty_c\left(\Omega;\eR^N\right)$ such that $\boldu^n\ModConv{f} \boldu$ as $n\rightarrow\infty$. Then we set $\bphi=\boldu^n$ in~\eqref{WFLim} and using Lemma~\ref{Lem:ProdConv} we conclude~\eqref{HomPrWeakForm}. Finally, if~\ref{MTw2} holds, we find for each $k\in\eN$ a sequence $\{u^{k,n}\}_{n=1}^\infty\subset C^\infty_c(\Omega)$ such that $\nabla u^{k,n}\ModConv{f} \nabla T_k(u)$ as $n\rightarrow\infty$, where the truncation operator $T_k$ was introduced in the proof of Lemma~\ref{Lem:MConvEquiv}. Then we set $\varphi=u^{k,n}$ in~\eqref{WFLim} and using Lemma~\ref{Lem:ProdConv} we deduce
\begin{equation*}
	\int_\Omega \bar\boldA\cdot\nabla T_k(u)\dx=\int_\Omega \boldF\cdot\nabla T_k(u)\dx.
\end{equation*}
Applying Lemma~\ref{Lem:ModConvGrTrunc} we deduce~\eqref{HomPrWeakForm}. Then it follows from~\eqref{WeakFormTestSol} using~\eqref{WSConv}$_1$ and~\eqref{HomPrWeakForm} that
\begin{equation*}
	\lim_{k\rightarrow\infty}\int_\Omega \boldA^k\cdot\nabla \uk\dx=\lim_{k\rightarrow\infty}\int_\Omega \boldF\cdot\nabla \uk\dx=\int_\Omega \boldF\cdot\nabla \boldu\dx=\int_\Omega \bar\boldA\cdot\nabla \boldu\dx,
\end{equation*}
which concludes~\eqref{LimIntProd}.\\
\textbf{Step 2}: We show that the following inequality holds for all $\boldV\in C^\infty_c(\Omega;C^\infty_{per}(Y;\eR^{d\times N}))$.
 \begin{equation}\label{LimMonIneq}
	0\leq\int_\Omega\int_Y (\boldA^0(x,y)-\boldA(y,\boldV(x,y)))\cdot(\nabla \boldu(x)+\boldU(x,y)-\boldV(x,y))\dy\dx
\end{equation}
 Let us choose $\boldV\in C^\infty_c(\Omega;C^\infty_{per}(Y;\eR^{d\times N}))$. Then according to Lemma~\ref{Lem:OperReg} we obtain $\boldA(\cdot,\boldV)\in L^\infty(\Omega\times Y;\eR^{d\times N})\subset E^{m^*_1}(\Omega\times Y;\eR^{d\times N})\subset E^{m^*_2}(\Omega\times Y;\eR^{d\times N})$. Moreover, $\boldA(\cdot,\boldV)$ is obviously Carath\'eodory. Then for $\boldV^k(x)=\boldV(x,x\varepsilon_k^{-1})$ and $\tilde{\boldA}^k(x):=\boldA(x\varepsilon_k^{-1},\boldV^k(x))$ we obtain
\begin{equation}\label{S2SC}
	\begin{alignedat}{2}
		\boldV^k&\STSCon \boldV &&\text{ in }E^{m_i}(\Omega\times Y;\eR^{d\times N}),\\
		\tilde{\boldA}^k&\STSCon \boldA(\cdot,\boldV(\cdot,\cdot)) &&\text{ in }E^{m_i^*}(\Omega\times Y;\eR^{d\times N}), i=1,2,
	\end{alignedat}
\end{equation}
as $k\rightarrow\infty$ by Lemma~\ref{Lem:Facts2S}~\ref{F2SFir}. From~\ref{AF} we get
\begin{equation*}
	\begin{split}
	0\leq& \int_\Omega (\boldA^k(x)-\tilde\boldA^k(x))\cdot(\nabla\uk(x)-\boldV^k(x))\dx=\int_\Omega \boldA^k(x)\cdot\nabla\uk(x)\dx-\int_\Omega \boldA^k(x)\cdot\boldV^k(x)\dx\\&-\int_\Omega \tilde\boldA^k(x)\cdot\nabla\uk(x)\dx+\int_\Omega \tilde\boldA^k(x)\cdot\boldV^k(x)\dx =I_k-II_k-III_k+IV_k.
	\end{split}
\end{equation*}
Now, want to perform the passage $k\rightarrow\infty$. Using~\eqref{LimIntProd} we obtain that
\begin{equation*}
	\begin{split}
	\lim_{k\rightarrow \infty}I_k&=\int_\Omega \bar\boldA\cdot \nabla \boldu\dx.
	\end{split}
\end{equation*}
Employing properties~\eqref{AZAv} and \eqref{AzOrtPr} yields
\begin{equation*}
	\lim_{k\rightarrow \infty}I_k=\int_\Omega \int_Y \boldA^0 \cdot \nabla \boldu\dy\dx\\
	= \int_\Omega \int_Y \boldA^0 \cdot (\nabla \boldu+\boldU)\dy\dx.
\end{equation*}
It follows from~\eqref{WTSC}$_2$, \eqref{S2SC}$_1$ and Lemma~\ref{Lem:Facts2S}~\ref{F2ST} that
\begin{equation*}
	\lim_{k\rightarrow\infty}II_k=\int_\Omega\int_Y \boldA^0\cdot\boldV\dy\dx,
\end{equation*}
whereas~\eqref{WTSC}$_1$,\eqref{S2SC}$_2$ and Lemma~\ref{Lem:Facts2S}~\ref{F2ST} imply
\begin{equation*}
	\lim_{k\rightarrow\infty}III_k=\int_\Omega\int_Y \boldA(y,\boldV(x,y))\cdot(\nabla \boldu(x)+\boldU(x,y))\dy\dx.
\end{equation*}
Finally, from~\eqref{S2SC} we deduce
\begin{equation*}
	\lim_{k\rightarrow\infty}IV_k=\int_\Omega\int_Y \boldA(y,\boldV(x,y))\cdot\boldV(x,y)\dy\dx.
\end{equation*}
Hence one obtains~\eqref{LimMonIneq}.\\
\textbf{Step 3}:
The goal is to show that $\boldV\in C^\infty_c(\Omega;C^\infty_{per}(Y;\eR^{d\times N}))$ in~\eqref{LimMonIneq} can be substituted by $\boldV\in L^\infty(\Omega\times Y;\eR^{d\times N})$. Let us fix an arbitrary function $\boldV \in L^\infty(\Omega\times Y;\eR^{d\times N})$. We first consider a sequence $\{K^m\}_{m=1}^\infty$ of compact subsets of $\Omega$ such that $K^1\subset K^2\subset\ldots\Omega$ and $\bigcup_{m=1}^\infty K^m=\Omega$. Obviously, defining $\boldV^m:=\boldV\chi_{K^m}$ for every $m\in\eN$ we have that all $\boldV^m$'s are compactly supported in $\Omega$ and
\begin{equation}\label{VMBound}
	\|\boldV^m\|_{L^\infty(\Omega\times Y)}\leq \|\boldV\|_{L^\infty(\Omega\times Y)}\text{ for all }m\in\eN.
\end{equation}
Next, we observe that~\eqref{VMBound} implies the existence of a positive constant $c$ such that
\begin{equation}\label{AVMUnifBound}
	\|\boldA(\cdot, \boldV^m)\|_{L^\infty(\Omega\times Y)}\leq c \ \text{ for all }m\in\eN.
\end{equation}
Assuming on the contrary that $\{\boldA(\cdot,\boldV^m)\}_{m=1}^\infty$ is unbounded, we have for arbitrary $K>0$ the existence of $m_K>0$ and $S_K\subset\Omega\times Y$ with $|S_K|>0$ such that $|\boldA(\cdot,\boldV^{m_K})|>K$ on $S_K$. As $M$ is an $N-$function, for a chosen $C>0$ there is $R>0$ such that $\frac{M^*(y,\boldxi)}{|\boldxi|}>C$ for any $|\boldxi|\geq R$. Thus for the choice $C=\|\boldV\|_{L^\infty(\Omega\times Y)}$ we find $m_{R}$ and $S_{R}\subset\Omega\times Y$  with $|S_{R}|>0$ such that for $(x,y)\in S_{R}$ we obtain using~\ref{ATh}
\begin{equation*}
	C< \frac{M^{*}(y,\boldA(y,\boldV^{m_{R}}))}{|\boldA(y,\boldV^{m_{R}})|}\leq |\boldV^{m_{R}}|\leq \sup_{m\in\eN}\|\boldV^m\|_{L^\infty(\Omega\times Y)}\leq C,
\end{equation*}
which is a contradiction and~\eqref{AVMUnifBound} is shown. Combining~\ref{MTwo} with~\eqref{VMBound} and~\eqref{AVMUnifBound} we get
\begin{equation*}
\begin{split}
	\int_\Omega\int_Y& M(y,\boldV^m)+M^*(y,\boldA(y,\boldV^m)\dy\dx\leq \int_\Omega\int_Y m_2(|\boldV^m|)+m_1^*(|\boldA(y,\boldV^m)|)\dy\dx\\
	&\leq \int_\Omega\int_Y m_2(\|\boldV^m\|_{L^\infty(\Omega\times Y)})+m_1^*(\|\boldA(\cdot,\boldV^m)\|_{L^\infty(\Omega\times Y)})\leq c.
\end{split}
\end{equation*}
Hence $\{\boldV^m\}_{m=1}^\infty$ and $\{\boldA(\cdot,\boldV^m)\}_{m=1}^\infty$ are uniformly integrable by Lemma~\ref{Lem:UnifIntegr}. Furthermore, it follows from the definition of $\boldV^m$ and the properties of $\boldA$ that $\boldV^m\to\boldV$ and $\boldA(\cdot,\boldV^m)\to\boldA(\cdot,\boldV)$ in measure as $m\to\infty$. Consequently, we get by Lemma~\ref{Lem:MConvEquiv} that
\begin{equation}\label{ConvInM}
	\boldV^m\ModConvM\boldV\text{ in }L^{M_y}(\Omega\times Y;\eR^{d\times N}),\ \boldA(\cdot,\boldV^m)\ModConv{M^*}\boldA(\cdot,\boldV) \text{ in }L^{M^*_y}(\Omega\times Y;\eR^{d\times N})\text{ as }m\to\infty.
\end{equation}
Let us consider a standard mollifier $\omega\in C^\infty(\eR^d\times\eR^d)$. Since $\boldV^m$ is supported in $K^m\subset\Omega$ for all $m$, we can find for every $m$ a sequence $\delta^n\to 0$ as $n\to\infty$ such that, defining $\boldV^{m,n}:=\boldV^m*\omega^n$, where $\omega^n(z)=(\delta^n)^{-2d}\omega\left(\frac{z}{\delta^n}\right)$, we have $\boldV^{m,n}\in C^\infty_c(\Omega;C^\infty_{per}(Y))^{d\times N}$. We immediately observe that $\|\boldV^{m,n}\|_{L^\infty(\Omega\times Y)}\leq\|\boldV^m\|_{L^\infty(\Omega\times Y)}$. In the same way as~\eqref{AVMUnifBound} was shown we get that $\|\boldA(\cdot,\boldV^{m,n})\|_{L^\infty(\Omega\times Y)}\leq c(m)$. We also obtain that $\boldV^{m,n}\to\boldV^m$ and $\boldA(\cdot,\boldV^{m,n})\to\boldA(\cdot,\boldV^m)$ in measure as $n\to\infty$ for every $m$. Moreover, for every $m$ the sequences $\{\boldV^{m,n}\}_{n=1}^\infty$ and $\{\boldA(\cdot,\boldV^{m,n})\}_{n=1}^\infty$ are uniformly integrable, which can be shown analogously as above. Consequently, we have for every $m$ that
\begin{equation}\label{ConvInN}
	\boldV^{m,n}\ModConvM\boldV^m\text{ in }L^{M_y}(\Omega\times Y;\eR^{d\times N}),\ \boldA(\cdot,\boldV^{m,n})\ModConv{M^*}\boldA(\cdot,\boldV^m) \text{ in }L^{M^*_y}(\Omega\times Y;\eR^{d\times N})\text{ as }n\to\infty.
\end{equation}
Finally, employing~\eqref{ConvInN}, \eqref{ConvInM} and Lemma~\ref{Lem:ProdConv} we infer from~\eqref{LimMonIneq} that
\begin{equation*}
	0\leq \lim_{m\to\infty}\lim_{n\to\infty}\int_\Omega\int_Y (\boldA^0-\boldA(y,\boldV^{m,n}))\cdot(\nabla \boldu+\boldU-\boldV^{m,n})=\int_\Omega\int_Y (\boldA^0-\boldA(y,\boldV))\cdot(\nabla \boldu+\boldU-\boldV).
\end{equation*}
\textbf{Step 4}: Let us denote for a positive $k$
\begin{equation*}
S_k=\{(x,y)\in\Omega\times Y:|\nabla \boldu(x)+\boldU(x,y)|\leq k\}
\end{equation*}
and $\chi_k$ be the characteristic function of $S_k$. We replace $\boldV\in L^\infty(\Omega\times Y;\eR^{d\times N})$ in~\eqref{LimMonIneq} by $(\nabla \boldu+\boldU)\chi_j+h\boldV\chi_i$ where $0<i<j$ and $h\in(0,1)$ to obtain
\begin{equation*}
	\begin{split}
	0\leq& \int_\Omega\int_Y \boldA^0\cdot(\nabla \boldu+\boldU-(\nabla \boldu+\boldU)\chi_j)\dy\dx\\
	&-\int_\Omega\int_Y\boldA(y,(\nabla \boldu+\boldU)\chi_j+h\boldV\chi_i))\cdot(\nabla \boldu+\boldU-(\nabla \boldu+\boldU)\chi_j)\\
	&-h\int_\Omega\int_Y(\boldA^0-\boldA(y,(\nabla \boldu+\boldU)\chi_j+h\boldV\chi_i))\cdot\boldV\chi_i\dy\dx=I-II+III.
	\end{split}
\end{equation*}

The term $I$ disappears when performing the limit passage $j\rightarrow\infty$ by the Lebesgue dominated convergence theorem and the fact $|\Omega\times Y\setminus S_j|\rightarrow 0$ as $j\rightarrow \infty$. As $(\nabla \boldu+\boldU)\chi_j+h\boldV\chi_i$ is zero in $\Omega\times Y\setminus S_j$, we see that $II=0$ thanks to~\ref{ATh}. After dividing the resulting inequality by $h$ and letting $j\to \infty$ we arrive with the help of Lebesgue dominated convergence theorem at
\begin{equation}\label{IntPos}
	\int_{S_i}(\boldA^0-\boldA(y,\nabla \boldu+\boldU+h\boldV))\cdot\boldV\dy\dx\le 0.
\end{equation}
By~\ref{MTwo} we obtain
\begin{equation}\label{UnifEstOnSI}
\begin{split}
	\int_{S_i} &M^*(y,\boldA(y,\nabla \boldu+\boldU+h\boldV))\dy\dx\leq \int_{S_i} m_1^*(|\boldA(y,\nabla \boldu+\boldU+h\boldV)|)\dy\dx\\
	&\leq |S_i|m_1^*(\|\boldA(\cdot,\nabla \boldu+\boldU+h\boldV)\|_{L^\infty(S_i)})\leq c.
	\end{split}
\end{equation}
The fact that $\|\boldA(\cdot,\nabla \boldu+\boldU+h\boldV)\|_{L^\infty(S_i)}$ is bounded independently of $h\in(0,1)$ is shown in the same way as~\eqref{AVMUnifBound} because
\begin{equation*}
\|\nabla \boldu+\boldU+h\boldV\|_{L^\infty(S_i)}\leq \|\nabla \boldu+\boldU\|_{L^\infty(S_i)}+\|\boldV\|_{L^\infty(\Omega\times Y)}\leq i+\|\boldV\|_{L^\infty(\Omega\times Y)}.
\end{equation*}
Since $\boldA(y,\nabla \boldu+\boldU+h\boldV)\rightarrow\boldA(y,\nabla \boldu+\boldU)$ a.e. in $S_i$ and $\{\boldA(y,\nabla \boldu+\boldU+h\boldV)\}_{h\in(0,1)}$ is uniformly integrable on $S_i$ due to~\eqref{UnifEstOnSI} and Lemma~\ref{Lem:UnifIntegr}, the Vitali theorem implies
\begin{equation*}
	\boldA(y,\nabla \boldu+\boldU+h\boldV) \rightarrow \boldA(y,\nabla \boldu+\boldU)\text{ in }L^1(S_i).
\end{equation*}
Therefore passing to the limit $h\rightarrow 0_+$ in~\eqref{IntPos} we arrive at
\begin{equation*}
	\int_{S_i}(\boldA^0-\boldA(y,\nabla \boldu+\boldU))\cdot\boldV\dy\dx\le 0.
\end{equation*}
Finally, setting
\begin{equation*}
	\boldV=\frac{\boldA^0-\boldA(y,\nabla \boldu+\boldU)}{|\boldA^0-\boldA(y,\nabla \boldu+\boldU)|+1}
\end{equation*}
yields
\begin{equation}\label{PointEq}
	\boldA^0(x,y)=\boldA(y,\nabla \boldu(x)+\boldU(x,y))
\end{equation}
for a.a. $(x,y)\in S_i$. Since $i$ was arbitrary and $|\Omega\times Y\setminus S_i|\rightarrow 0$ as $i\rightarrow \infty$, the equality~\eqref{PointEq} holds a.e. in $\Omega\times Y$.
Moreover, due to the properties~\eqref{BUReg} and~\eqref{AzDual} we obtain that $\boldU(x,\cdot)$ is equal to the gradient of a weak solution of the cell problem~\eqref{CellPr} corresponding to $\boldxi=\nabla \boldu(x)$. Finally, we get by~\eqref{AZAv} and~\eqref{AhDef} that
\begin{equation}\label{BAIdent}
	\bar\boldA(x)=\int_Y\boldA^0(x,y)\dy=\int_Y \boldA(y,\nabla \boldu(x)+\boldU(x,y))\dy=\hat\boldA(\nabla \boldu(x)).
\end{equation}
\textbf{Step 5}:
The existence of a unique weak solution of the problem~\eqref{HPr}, which is a function $\boldu\in V_0^f$ that satisfies
	\begin{equation}\label{HSWF}
			\int_\Omega \hat\boldA(\nabla \boldu)\cdot\nabla\bphi=\int_\Omega\boldF\cdot\nabla\bphi\ \forall\bphi\in V_0^f.
	\end{equation}
We notice that the existence part has been proven in the previous steps. Indeed, in~\eqref{BAIdent} we identified the function $\bar\boldA$, which arises in~\eqref{WFLim}. Then using the density of smooth compactly supported functions in $V^f_0$ we conclude~\eqref{HSWF}.
	In order to show the uniqueness of a weak solution of~\eqref{HPr} we can follow the proof of the uniqueness of a weak solution in Theorem~\ref{Thm:WSExistMHoeldC}.\\
	\textbf{Step 6}:
	Since we know that~\eqref{HPr} possesses a unique solution $\boldu$ and we can extract from any subsequence of $\{\boldu^j\}_{j=1}^\infty$ a subsequence that converges to $\boldu$ weakly in $W^{1,1}_0(\Omega;\eR^N)$), the whole sequence $\{\boldu^j\}_{j=1}^\infty$ converges to $\boldu$ weakly in $W^{1,1}_0(\Omega;\eR^N)$.

\begin{appendix}
\section{Musielak--Orlicz spaces}\label{Ape1}
Assume here that $\Sigma\subset\eR^n$ is a bounded domain and $n\in \mathbf{N}$ is arbitrary. A function $M:\Sigma\times\eR^n\rightarrow[0,\infty)$ is said to be an ${\mathcal N}-$function if it satisfies the following four requirements:
	\begin{enumerate}
		\item $M$ is a Carath\'eodory function such that $M(x,\boldxi)=0$ if and only if $\boldxi=\bzero$. In addition we assume that for almost all $x\in \Sigma$, we have  $M(x,\boldxi)=M(x,-\boldxi)$.
		\item For almost all $x\in \Sigma$ the mapping $\boldxi\mapsto M(x,\boldxi)$ is convex.
		\item For almost all $x\in \Sigma$ there holds $\lim_{\substack{|\boldxi|\rightarrow\infty}}\frac{M(x,\boldxi)}{|\boldxi|}=\infty$.
		\item For almost all $x\in \Sigma$ there holds $\lim_{|\boldxi|\rightarrow 0}\frac{M(x,\boldxi)}{|\boldxi|}=0$.
	\end{enumerate}		
The corresponding complementary ${\mathcal N}$--function $M^*$ to $M$ is defined for $\boldeta\in\eR^n$ and almost all  $x\in \Sigma$ by
\begin{equation*}
		M^*(x,\boldeta):=\sup_{\boldxi\in\eR^n}\{\boldxi\cdot\boldeta-M(x,\boldxi)\}
\end{equation*}
and directly from this  definition, one obtains the generalized Young inequality
\begin{equation}\label{YIneq}
		\boldxi\cdot\boldeta\leq M(x,\boldxi)+M^*(x,\boldeta),
\end{equation}
valid for all $\boldxi,\boldeta\in\eR^n$ and almost everywhere in $\Sigma$. In addition, for $\boldxi:=\nabla_{\boldeta}M^*(x,\boldeta)$, we obtain the equality sign in~\eqref{YIneq}, see \cite[Section 5]{SI69}. Finally, an ${\mathcal N}$-function $M$ is said to satisfy the $\Delta_2$--condition if there exists $c>0$ and a nonnegative function $h\in L^1(\Sigma)$ such that for a.a. $x\in\Sigma$ and all $\boldxi\in\eR^n$
	\begin{equation*}
		M(x,2\boldxi)\leq cM(x,\boldxi)+h(x).
	\end{equation*}

Having introduced the notion of $\mathcal{N}$--function, we can define the generalized Musielak--Orlicz class	$\mathcal{L}^M(\Sigma)$ as a set of all measurable functions $\boldv:\Sigma\rightarrow\eR^n$ in the following way
\begin{equation*}
		\mathcal{L}^M(\Sigma):=\left\{\boldv \in L^1(\Sigma;\eR^n); \; \int_\Sigma M(x,\boldv(x))\dx<\infty\right\}.
\end{equation*}
In general the class $\mathcal{L}^M(\Sigma)$ does not form a linear vector space and therefore, we define the generalized Musielak--Orlicz space $L^M(\Sigma)$ as the smallest linear space containing $\mathcal{L}^M(\Sigma)$. More precisely, we define
\begin{equation*}
		L^M(\Sigma):=\left\{\boldv \in L^1(\Sigma;\eR^n); \; \textrm{ there exists $\lambda>0$ such that }\int_\Sigma M\left(x,\frac{\boldv(x)}{\lambda}\right)\dx<\infty\right\}.
\end{equation*}
It can be shown that $L^M(\Sigma)$ is a Banach space with respect to the Orlicz norm
	\begin{equation*}
		\|\boldv\|_{L^M}:=\sup\left\{\left|\int_{\Sigma}\boldv(x)\boldw(x)\dx\right|\!:\boldw\in L^{M^*}(\Sigma), \int_{\Sigma}M(x,\boldw(x))\dx\leq 1 \right\}
	\end{equation*}
or the equivalent Luxemburg norm
\begin{equation*}
		\|\boldv\|_{L^M}:=\inf\left\{\lambda>0:\int_\Sigma M\left(x,\frac{\boldv(x)}{\lambda}\right)\dx\leq 1\right\}.
\end{equation*}
Moreover, we have the following generalized H\"{o}lder inequality, see \cite[Theorem 4.1.]{SII69},
\begin{equation*}
		\left|\int_\Sigma \boldu\cdot\boldv \dx \right|\leq 2\|\boldu\|_{L^M}\|\boldv\|_{L^{M^*}}
\end{equation*}
valid for all $\boldu\in L^M(\Sigma)$ and all $\boldv\in L^{M^*}(\Sigma)$. It is not difficult to observe directly from the definition (or by Young inequality~\eqref{YIneq}), that
\begin{equation}\label{SSF}
		\|\boldv\|_{L^M}\leq c\left(\int_{\Sigma}M(x,\boldv(x))\dx+1\right),
\end{equation}
with some $c>0$, that can be set $c=1$ if we work with the Orlicz norm. Similarly, for the functional $\mathcal{F}:L^M(\Sigma)\rightarrow\eR$ defined as
\begin{equation*}
		\mathcal{F}(\boldv):=\int_\Sigma M(x,\boldv(x))\dx,
\end{equation*}
we can directly obtain from the definition and due to the convexity of $M$ that if $\|\boldv\|_{L^M}\leq 1$ and the Luxemburg norm is considered then
\begin{equation}
\label{SSS}
\mathcal{F}(\boldv)\leq\|\boldv\|_{L^M}.
\end{equation}
Finally, we also recall the definition of the conjugate functional $\mathcal{F}^*:L^{M^*}(\Sigma)\rightarrow\eR$
$$
\mathcal{F}^*(\boldv^*):= \sup_{\boldv \in L^M(\Sigma)} \left(\int_{\Sigma} \boldv \cdot \boldv^* \dx - \mathcal{F}(\boldv) \right)
$$
and it is not difficult to observe by using the Young inequality that\footnote{Young inequality~\eqref{YIneq} implies $\mathcal{F}^*(\boldv^*)\leq \int_\Sigma M^*(x,\boldv^*(x))\dx$. On the other hand, we have $M^*(\cdot,\boldv^*)=\boldv^*\cdot\boldw-M(\cdot,\boldw)$ for $\boldw(x):=\nabla_{\boldxi}M^*(x,M^*(x,\boldv^*))$, which after integration leads to $\mathcal{F}^*(\boldv^*)\ge \int_\Sigma M^*(x,\boldv^*(x))\dx$ and~\eqref{SST} follows.}
\begin{equation}
\label{SST}
\mathcal{F}^*(\boldv^*) =\int_\Sigma M^*(x,\boldv^*(x))\dx.
\end{equation}

We complete this subsection by recalling the basic functional-analytic facts about the generalized Musielak--Orlicz spaces. For this purpose we define an additional space
\begin{equation*}
E^M(\Sigma):= \overline{\left\{L^{\infty}(\Sigma;\eR^n)\right\}}^{\|\cdot \|_{L^M(\Sigma)}}.
\end{equation*}
The following key lemma summarizes the fundamental properties of the involved function spaces (see e.g. \cite{S05} for details).
\begin{Lemma}[separability, reflexivity]\label{Thm:OrlSpProp}
	Let $M$ be an ${\mathcal N}$--function. Then
	\begin{enumerate}
		\item $E^M(\Sigma)=L^M(\Sigma)$ if and only if $M$ satisfies the $\Delta_2$--condition,
		\item $(E^M(\Sigma))^*=L^{M^*}(\Sigma)$, i.e., $L^{M^*}(\Sigma)$ is a dual space to $E^M(\Sigma)$,
		\item $E^M(\Sigma)$ is separable,
		\item $L^M(\Sigma)$ is separable if and only if $M$ satisfies the $\Delta_2$--condition,
		\item $L^M(\Sigma)$ is reflexive if and only if $M,M^*$ satisfy the $\Delta_2$--condition.
	\end{enumerate}
\end{Lemma}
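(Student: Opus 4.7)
The plan is to establish the five assertions by reducing everything to two fundamental tools: the equivalence between modular and norm convergence (which is precisely controlled by the $\Delta_2$--condition) and the representation of $E^M$ as the closure of bounded (equivalently, simple) functions. I would prove (3) and (1) first, then derive (2) by a Radon--Nikodym argument, and finally obtain (4) and (5) as corollaries.

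First, for (3), I would observe that since $M$ is a Carath\'{e}odory function finite at every $\boldxi$, any bounded function has finite modular, hence $L^\infty(\Sigma;\eR^n)\subset E^M(\Sigma)$ by definition. Step functions with rational values on a countable algebra of sets (say finite unions of dyadic cubes intersected with $\Sigma$) are dense in $L^\infty$ with respect to $\|\cdot\|_{L^M}$, because for a uniformly bounded sequence $\boldv_k\to\boldv$ a.e., the integrands $M(x,\lambda(\boldv_k-\boldv))$ are dominated by $M(x,2\lambda\|\boldv\|_\infty)\in L^1(\Sigma)$ for any $\lambda>0$, so the dominated convergence theorem combined with \eqref{SSS} yields norm convergence. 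This produces a countable dense subset of $E^M(\Sigma)$.

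For (1), the ``if'' direction uses the truncations $\boldv^k:=\boldv\chi_{\{|\boldv|\le k\}}$: under $\Delta_2$,
\[
M(x,\lambda(\boldv-\boldv^k))\le c_\lambda M(x,\boldv)\chi_{\{|\boldv|>k\}}+h_\lambda(x),
\]
for suitable constants/functions depending on $\lambda$, which is integrable and converges to zero pointwise; since $\Delta_2$ also ensures that modular convergence at some positive $\lambda$ implies Luxemburg norm convergence, we get $\boldv^k\to\boldv$ in $L^M$, and each $\boldv^k\in L^\infty\subset E^M$. The ``only if'' direction is by contradiction: failing $\Delta_2$ produces a sequence of measurable sets $A_n\subset\Sigma$ with positive measure and values $\boldxi_n$ with $M(x,2\boldxi_n)>2^n M(x,\boldxi_n)$ on $A_n$; glueing these with carefully chosen weights yields a function $\boldv\in L^M(\Sigma)\setminus E^M(\Sigma)$ whose norm is not absolutely continuous.

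For (2), I would define $\Phi\colon L^{M^*}(\Sigma)\to(E^M(\Sigma))^*$ by $\Phi(\boldw)(\boldv):=\int_\Sigma\boldv\cdot\boldw\dx$; the generalized H\"older inequality gives $\|\Phi(\boldw)\|\le 2\|\boldw\|_{L^{M^*}}$, and injectivity is immediate. For surjectivity, given $\Lambda\in(E^M(\Sigma))^*$, the vector measure $A\mapsto\Lambda(\boldone_A\mathbf{e}_i)$ is absolutely continuous with respect to Lebesgue measure, so by Radon--Nikodym there is a function $\boldw$ representing $\Lambda$ on bounded functions; the bound $\|\Lambda\|<\infty$ combined with \eqref{SST} yields $\boldw\in L^{M^*}(\Sigma)$. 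Finally, (4) is immediate from (1) and (3), and (5) follows from (2) applied to $M$ and $M^*$ together with the fact that $L^M=(L^{M^*})^*=((E^M)^*)^*$ reduces to $L^M=E^M$ in the reflexive case, which by (1) is equivalent to $\Delta_2$ for $M$, and symmetrically for $M^*$.

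The main obstacle will be the construction in the ``only if'' direction of (1): one needs to carefully exploit the quantitative failure of $\Delta_2$ to produce a specific element whose norm is not absolutely continuous, handling the $x$-dependence of $M$ (which in the Musielak--Orlicz, as opposed to classical Orlicz, setting requires a measurable selection argument to pick the offending values $\boldxi_n$ on sets of positive measure). All the remaining steps are routine once this characterization is in place, and the whole proof can be found in detail in \cite{S05,SI69}.
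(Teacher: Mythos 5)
The paper does not prove this lemma; it cites \cite{S05} and treats the statements as standard facts about Musielak--Orlicz spaces, so there is no in-paper proof to compare your sketch against. Your route is broadly in the right neighborhood for (1), (2) and (3), but two points need attention.

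In (3) you assert that $M(x,2\lambda\|\boldv\|_\infty)\in L^1(\Sigma)$, and earlier that every bounded function has finite modular because $M$ is Carath\'eodory and finite at each $\boldxi$. Pointwise finiteness in $\boldxi$ does not give integrability of $x\mapsto\sup_{|\boldxi|\le R}M(x,\boldxi)$: without a local integrability hypothesis such as \eqref{MLocBound} (which in this paper follows from \ref{MTwo}), bounded functions need not lie in $\mathcal{L}^M$. The cited references impose this tacitly; your dominated-convergence argument is fine once the hypothesis is made explicit, but as written it smuggles in what has to be assumed about $M$.

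More substantively, (4) is \emph{not} ``immediate from (1) and (3).'' The implication that $\Delta_2$ gives separability of $L^M=E^M$ is indeed immediate. But the converse does not follow from knowing $E^M\subsetneq L^M$: a Banach space can be separable and still contain a proper closed separable subspace (any hyperplane of $\ell^2$, say). You need the classical construction: from the failure of $\Delta_2$ extract disjoint sets $A_n\subset\Sigma$ of positive measure and vectors $\boldxi_n$ with $M(\cdot,2\boldxi_n)>2^n M(\cdot,\boldxi_n)$ on $A_n$, choose $|A_n|$ so that $\boldv:=\sum_n\boldxi_n\chi_{A_n}\in L^M(\Sigma)$, and then show that the uncountable family $\{\boldv\chi_{\bigcup_{n\in S}A_n}:S\subset\eN\}$ is uniformly separated in Luxemburg norm. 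That step does not follow from (1) and (3). A similar caveat applies to (5): the chain $L^M=(L^{M^*})^*=((E^M)^*)^*$ is not an identity a priori, since $(L^{M^*})^*$ may be strictly larger than $L^M$. The correct route uses $L^M=(E^{M^*})^*$, so $L^M$ reflexive iff $E^{M^*}$ reflexive iff $(L^M)^*=(E^{M^*})^{**}=E^{M^*}$; combining this with the natural embedding $L^{M^*}\hookrightarrow(L^M)^*$ and part (1) forces $\Delta_2$ for $M^*$, and the symmetric argument (applied to $M^*$, using $(M^*)^*=M$) gives $\Delta_2$ for $M$.
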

We see from the above lemma that in some cases we need to face the problem with the density of bounded functions and also the lack of reflexivity and separability properties, that somehow excludes many analytical framework to be used. Thus, in addition to the strong/weak/weak$^*$ topology, we will also work with the modular topology. We say that a sequence $\{\boldv^k\}_{k=1}^\infty\subset L^M(\Sigma)$ converges modularly to $\boldv$ in $L^M(\Sigma)$ if there is $\lambda>0$ such that as $k\rightarrow\infty$
	\begin{equation*}
		\int_{\Sigma} M\left(x,\frac{\boldv^k(x)-\boldv(x)}{\lambda}\right)\dx\rightarrow 0.
	\end{equation*}
We use the notation $\boldv^k\ModConvM\boldv$ for the modular convergence in $L^M(\Sigma)$. The key property of the modular convergence is stated in the following lemma.

\begin{Lemma}\cite[Proposition 2.2.]{GSG08}\label{Lem:ProdConv}
Let $M$ be an ${\mathcal N}$--function and $M^*$ be the conjugate ${\mathcal N}$--function to $M$. Suppose that sequences $\{\boldv^k\}_{k=1}^\infty$ and $\{\boldw^k\}_{k=1}^\infty$ are uniformly bounded in $L^M(\Sigma)$, $L^{M^*}(\Sigma)$ respectively. Moreover, let $\boldv^k\ModConvM\boldv$ and $\boldw^k\ModConv{M^*}\boldw$. Then $\boldv^k\cdot\boldw^k\rightarrow\boldv\cdot\boldw$ in $L^1(\Sigma)$ as $k\rightarrow\infty$.
\end{Lemma}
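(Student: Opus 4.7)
The plan is to reduce the desired $L^1$--convergence to the hypotheses of Vitali's convergence theorem on the finite--measure set $\Sigma$, namely
(i) pointwise convergence of $\boldv^k\cdot\boldw^k$ to $\boldv\cdot\boldw$ in measure, and
(ii) uniform equi--integrability of the family $\{\boldv^k\cdot\boldw^k\}_{k=1}^{\infty}$ in $L^1(\Sigma)$.
Both ingredients will come directly from the two assumed modular convergences, combined with the equivalent characterization recorded in Lemma~\ref{Lem:MConvEquiv} and with the generalized Young inequality~\eqref{YIneq}.

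First I would apply Lemma~\ref{Lem:MConvEquiv} to each of the modular convergences $\boldv^k\ModConvM\boldv$ and $\boldw^k\ModConv{M^*}\boldw$. This produces constants $\lambda_1,\lambda_2>0$ with the property that $\boldv^k\to\boldv$ and $\boldw^k\to\boldw$ in measure on $\Sigma$, while the families
$\{M(\cdot,\lambda_1\boldv^k)\}_{k=1}^{\infty}$ and $\{M^*(\cdot,\lambda_2\boldw^k)\}_{k=1}^{\infty}$ are uniformly equi--integrable on $\Sigma$. Since $|\Sigma|<\infty$ and multiplication is continuous with respect to convergence in measure, this immediately yields $\boldv^k\cdot\boldw^k\to\boldv\cdot\boldw$ in measure, settling~(i).

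For~(ii) the key step is to apply Young's inequality~\eqref{YIneq} not to $\boldv^k,\boldw^k$ themselves but to the rescaled arguments $\lambda_1\boldv^k$ and $\lambda_2\boldw^k$, which gives
\begin{equation*}
\lambda_1\lambda_2\,|\boldv^k(x)\cdot\boldw^k(x)|\leq M(x,\lambda_1\boldv^k(x))+M^*(x,\lambda_2\boldw^k(x))\quad\text{a.e. in }\Sigma.
\end{equation*}
The right-hand side is the sum of two uniformly equi--integrable families by the previous paragraph, hence so is $\{\boldv^k\cdot\boldw^k\}_{k=1}^\infty$. Combining~(i) and~(ii) with Vitali's theorem then delivers $\boldv^k\cdot\boldw^k\to\boldv\cdot\boldw$ in $L^1(\Sigma)$.

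The only genuine subtlety---and the reason this result can be proved without any $\Delta_2$--assumption on $M$ or $M^*$---is that uniform boundedness in Luxemburg norm does \emph{not} control $\int_\Sigma M(x,\boldv^k)\dx$ when the norm exceeds $1$, so applying Young's inequality to the unscaled pair $(\boldv^k,\boldw^k)$ would not close the argument. The proper scalings $\lambda_1,\lambda_2$ supplied by Lemma~\ref{Lem:MConvEquiv} are exactly what is needed to obtain uniform equi--integrability (rather than merely $L^1$--boundedness) of both ingredients on the right-hand side of the Young inequality; this is the heart of the argument, and everything else is a routine application of Vitali's theorem.
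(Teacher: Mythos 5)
Your proof is correct, and it follows what is by now the standard route for this result: invoke Lemma~\ref{Lem:MConvEquiv} to upgrade the two modular convergences to convergence in measure plus uniform integrability of $\{M(\cdot,\lambda_1\boldv^k)\}$ and $\{M^*(\cdot,\lambda_2\boldw^k)\}$ for suitable $\lambda_1,\lambda_2>0$, then apply the generalized Young inequality~\eqref{YIneq} to the \emph{rescaled} pair $(\lambda_1\boldv^k,\lambda_2\boldw^k)$ to dominate $\lambda_1\lambda_2\,|\boldv^k\cdot\boldw^k|$ by a sum of two uniformly integrable families, and finish with Vitali's theorem on the bounded domain $\Sigma$. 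Note that the paper itself does not reproduce a proof of this lemma; it is cited directly as \cite[Proposition 2.2]{GSG08}, so there is no in-text argument to compare against. Your closing remark pinpoints the genuinely delicate point in the absence of the $\Delta_2$-condition: Luxemburg-norm boundedness alone does not control the modular $\int_\Sigma M(x,\boldv^k)\dx$, and it is precisely the scalings supplied by the modular convergence, not the stated norm bounds, that make the Young-inequality step close. (Indeed, with Lemma~\ref{Lem:MConvEquiv} available, your argument never actually uses the stated uniform boundedness hypotheses.) Two small points you glossed over but that are standard and harmless: $|\boldxi\cdot\boldeta|\leq M(x,\boldxi)+M^*(x,\boldeta)$ follows from the evenness of $M$ in $\boldxi$, and the fact that convergence in measure of the factors propagates to the inner product relies on $|\Sigma|<\infty$, which you do invoke.
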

Finally, we also recall the weak$^*$ lower semicontinuity property of convex functionals. Since in our case,  the ${\mathcal N}$--function $M$ may not  satisfy the $\Delta_2$--condition in general, the spaces  do not have to be  reflexive. However, due to Lemma~\ref{Thm:OrlSpProp}, we see that any $L^M$ always has a separable predual space and consequently any bounded sequence possesses a weakly$^*$ convergent subsequence. This motivates us to introduce the last convergence theorem, that can be obtained by standard  weak lower semicontinuity properties of convex functionals, see e.g. \cite[Theorem 4.5]{G03}, namely:
\begin{Lemma}\label{Lem:SemCon}
	Let $\Omega\subset\Rd$ be open, $Y=(0,1)^d$, $n\in\eN$ and $\Phi:Y\times \eR^n \rightarrow \eR$ satisfy:
\begin{enumerate}[label=(\alph*)]
	\item $\Phi$ is Carath\'eodory,
	\item $\Phi(y,\cdot)$ is convex for almost all $y\in Y$,
	\item $\Phi\geq 0$.
\end{enumerate}
	Then we have the following semicontinuity property: $\boldv^k\WCon \boldv$ in $L^1(\Omega\times Y;\eR^n)$ as $k\rightarrow\infty$ implies
	\begin{equation*}
		\liminf_{k\rightarrow\infty}\int_{\Omega}\int_Y \Phi(y,\boldv^k(x,y))\dy\dx\geq\int_\Omega\int_Y \Phi(y,\boldv(x,y))\dy\dx.
		\end{equation*}
\end{Lemma}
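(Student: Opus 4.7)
The plan is to reduce the statement to a classical principle: any convex, strongly lower semicontinuous functional on a Banach space is sequentially weakly lower semicontinuous. I would introduce the integral functional
\[ F(\boldv):=\int_\Omega\int_Y\Phi(y,\boldv(x,y))\dy\dx, \qquad \boldv\in L^1(\Omega\times Y;\eR^n), \]
with the convention $F(\boldv)=+\infty$ when the integral diverges, and verify that $F$ is both convex and strongly lower semicontinuous on $L^1(\Omega\times Y;\eR^n)$.

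Convexity of $F$ follows directly from hypothesis (b): for $\lambda\in[0,1]$, convexity of $\Phi(y,\cdot)$ for a.a. $y$ gives $\Phi(y,\lambda\boldv+(1-\lambda)\boldw)\leq \lambda\Phi(y,\boldv)+(1-\lambda)\Phi(y,\boldw)$ pointwise, and integration preserves the inequality. For strong lower semicontinuity, I would take $\boldv^k\to\boldv$ in $L^1$, extract an a.e.\ convergent subsequence $\boldv^{k_j}$, use the Carath\'eodory hypothesis (a) to deduce $\Phi(y,\boldv^{k_j}(x,y))\to\Phi(y,\boldv(x,y))$ a.e., and apply Fatou's lemma (legitimate by the nonnegativity (c)) to obtain $\liminf_j F(\boldv^{k_j})\geq F(\boldv)$. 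A standard subsequence-of-subsequence argument then promotes the bound to the full sequence.

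With convexity and strong lower semicontinuity in hand, the passage to the weak topology is executed via Mazur's lemma. Given $\boldv^k\rightharpoonup \boldv$ in $L^1(\Omega\times Y;\eR^n)$, I would first pass to a subsequence $\{\boldv^{k_j}\}$ along which $F(\boldv^{k_j})\to\liminf_k F(\boldv^k)$. Mazur's lemma then supplies finite convex combinations $\tilde\boldv^N\in\mathrm{conv}\{\boldv^{k_j}:j\geq N\}$ converging strongly to $\boldv$ in $L^1$. Convexity of $F$ yields $F(\tilde\boldv^N)\leq \sup_{j\geq N}F(\boldv^{k_j})$, while strong lower semicontinuity yields $F(\boldv)\leq \liminf_N F(\tilde\boldv^N)$. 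Letting $N\to\infty$, the supremum on the right converges to $\liminf_k F(\boldv^k)$ by construction of the realizing subsequence, and the desired inequality follows.

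The argument is essentially a routine packaging of classical tools, matching the reference given in the paper; the only point requiring attention is the tail-selection in the Mazur step, which must be arranged so that $\sup_{j\geq N} F(\boldv^{k_j})$ has the correct limit as $N\to\infty$. No substantial obstacle is expected, as both convexity and the Fatou-based strong semicontinuity are immediate from the Carath\'eodory structure and nonnegativity of $\Phi$.
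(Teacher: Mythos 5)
Your argument is correct: the convexity of $F$, strong $L^1$ lower semicontinuity via an a.e.\ subsequence and Fatou, and the passage from strong to weak lower semicontinuity by Mazur's lemma together constitute the standard proof of weak lower semicontinuity for nonnegative convex normal integrands. The paper does not actually prove this lemma but simply defers to \cite[Theorem~4.5]{G03}; your proof is a correct and faithful reconstruction of the classical argument behind that citation, including the tail-selection in the Mazur step needed so that $\sup_{j\geq N}F(\boldv^{k_j})$ tends to the realized $\liminf$.
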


We continue with the characterization of the space $E^M$.
\begin{Lemma}\label{Lem:EMChar}
Let $\Sigma\subset\Rd$ be bounded, $M$ be an ${\mathcal N}$--function such that for all $R>0$
\begin{equation}\label{MLocBound}
	\int_{\Sigma}\sup_{|\boldxi|\leq R}M(x,\boldxi)\dx<\infty.
\end{equation}
Then
\begin{equation*}
	E^M(\Sigma)=\{\boldv\in L^M(\Sigma):\; \textrm{ for all } t\geq 0 \textrm{ we have } t\boldv\in\mathcal{L}^M(\Sigma)\}.
\end{equation*}
\begin{proof}	
Let us consider $\boldv\in E^M(\Sigma)$ and a sequence $\{\boldv^k\}_{k=1}^\infty\subset L^\infty(\Sigma)$ be such that $\|\boldv^k-\boldv\|_{L^M}\rightarrow 0$ as $k\rightarrow\infty$. Then for an arbitrary $t\geq 0$ we obtain using the convexity of $M$ in the second variable that
\begin{equation*}
	\int_\Sigma M(x,t\boldv)\dx\leq\frac{1}{2}\left(\int_\Sigma M(x,2t\boldv^k)\dx+\int_\Sigma M(x,2t(\boldv-\boldv^k))\dx\right).
	\end{equation*}
	The first integral on the right hand side is finite by~\eqref{MLocBound} and the second one vanishes in the limit $k\rightarrow\infty$ by~\eqref{SSS}. Thus we showed that for all $t\geq 0\ t\boldv\in\mathcal{L}^M(\Sigma)$.\\
	Let $\boldv\in L^M(\Sigma)$ and assume that for all $t\geq 0$
	\begin{equation}\label{MultipleVInOC}
		 t\boldv\in\mathcal{L}^M(\Sigma).
	\end{equation}
	Defining $\boldv^k:=\boldv\chi_{\{|\boldv|\leq k\}}$, we have $\{\boldv^k\}_{k=1}^\infty\subset L^\infty(\Sigma)$ and due to~\eqref{MultipleVInOC} we get for all $t\geq 0$
	\begin{equation*}
		\int_\Sigma M(x,t(\boldv^k-\boldv))\dx= \int_{\{|\boldv|>k\}}M(x,t\boldv)\dx\overset{k\rightarrow\infty}{\rightarrow} 0.
	\end{equation*}
Hence for given $\delta>0$ we find $k_0(\delta)$ such that for any $k\geq k_0$ $\int_\Sigma M(x,\delta^{-1}(\boldv^k-\boldv))\dx<1$. Then we obtain $\|\boldv^k-\boldv\|_{L^M}\leq \delta$ by the definition of the Luxemburg norm and we conclude that $\boldv\in E^M(\Sigma)$.
\end{proof}
\end{Lemma}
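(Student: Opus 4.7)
The plan is to check the two set-theoretic inclusions separately. The forward inclusion $E^M(\Sigma)\subseteq\{\boldv\in L^M(\Sigma): t\boldv\in\mathcal{L}^M(\Sigma)\ \forall t\geq 0\}$ will be handled by approximation, the reverse one by truncation, and the only nontrivial work is in converting between modular and Luxemburg-norm convergence at the right moments.

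For the forward inclusion, I would fix $\boldv\in E^M(\Sigma)$ and pick a sequence of bounded functions $\boldv^k\in L^\infty(\Sigma;\eR^n)$ with $\|\boldv^k-\boldv\|_{L^M}\to 0$. For an arbitrary $t\geq 0$, I would apply convexity of $M$ in its second argument to obtain
\[
M(x,t\boldv)\leq \tfrac12 M(x,2t\boldv^k)+\tfrac12 M(x,2t(\boldv-\boldv^k)),
\]
and then integrate over $\Sigma$. The first term is finite by hypothesis~\eqref{MLocBound}, since $2t\boldv^k$ is essentially bounded; the second tends to zero because, once $k$ is so large that $\|2t(\boldv-\boldv^k)\|_{L^M}\leq 1$, the estimate~\eqref{SSS} bounds the modular by the Luxemburg norm. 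Thus $t\boldv\in\mathcal{L}^M(\Sigma)$.

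For the reverse inclusion, I would take $\boldv\in L^M(\Sigma)$ satisfying $t\boldv\in\mathcal{L}^M(\Sigma)$ for every $t\geq 0$, and approximate by truncations $\boldv^k:=\boldv\chi_{\{|\boldv|\leq k\}}$, which automatically lie in $L^\infty(\Sigma;\eR^n)$. For any $t\geq 0$ the identity
\[
\int_\Sigma M(x,t(\boldv^k-\boldv))\dx=\int_{\{|\boldv|>k\}}M(x,t\boldv)\dx
\]
together with Lebesgue's dominated convergence theorem (the integrand on the right is dominated by $M(\cdot,t\boldv)\in L^1(\Sigma)$) gives modular convergence at every scale $t$. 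To upgrade this to norm convergence I would, given $\delta>0$, apply the above with $t=\delta^{-1}$ to produce $k_0$ beyond which the resulting modular is below $1$; the definition of the Luxemburg norm then forces $\|\boldv^k-\boldv\|_{L^M}\leq\delta$, so $\boldv$ is the $L^M$-limit of the bounded $\boldv^k$'s, i.e. $\boldv\in E^M(\Sigma)$.

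The only conceptual subtlety, and arguably the main obstacle, is that the hypothesis ``$t\boldv\in\mathcal{L}^M$ for every $t\geq 0$'' is precisely the correct substitute for a $\Delta_2$-condition on $M$: it supplies the scaling freedom needed to convert a modular statement valid for every $t$ into Luxemburg-norm convergence, without invoking reflexivity or separability. The local boundedness hypothesis~\eqref{MLocBound} plays the complementary role of guaranteeing that bounded approximants contribute finite modular integrals. Beyond this interplay I anticipate no genuine difficulty.
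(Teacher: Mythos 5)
Your proof is correct and follows essentially the same route as the paper's: the same convexity-splitting argument combined with \eqref{MLocBound} and \eqref{SSS} for the forward inclusion, and the same truncation $\boldv^k=\boldv\chi_{\{|\boldv|\le k\}}$ together with the rescaling $t=\delta^{-1}$ to pass from modular to Luxemburg-norm convergence for the reverse inclusion. The only cosmetic difference is that you invoke dominated convergence by name where the paper simply records the limit, and you add an explanatory remark on the role of the hypothesis; the mathematical content is identical.
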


The last statement of this subsection concerns possible relaxing of assumption \ref{M4}. Namely, we show that for an $\mathcal{N}$--function $M$ that is radially symmetric in the second variable the accomplishment of \ref{MTh} implies the validity of \ref{M4}.
\begin{Lemma}\label{Lem:M2CondRelax}
	Let $M:\eR^d\times[0,\infty)\rightarrow[0,\infty)$ be an $\mathcal{N}$--function satisfying conditions \ref{MTwo} and \ref{MTh}. Assume that $Q^\delta_j$ with $\delta<\delta_0:=\frac{1}{8\sqrt{d}}$ is an arbitrary cube defined in \ref{MTh} and that there are constants $A>0$ and $B\geq 1$ such that for all $y_1,y_2\in\Sigma$ (where either $\Sigma$ is a bounded Lipschitz domain or $\Sigma=Y$) with $|y_1-y_2|\leq \frac{1}{2}$ and all $\xi\in[0,\infty)$ the assumption \ref{MTh} holds. Then for $M^\delta_j$ given by \eqref{MDeltajDef} and its biconjugate $(M^\delta_j)^{**}$ it follows that \ref{M4} is satisfied.
	\begin{proof}
		First, we fix an arbitrary $y\in Q^\delta_j$ and note that
		\begin{equation}\label{Quotient}
			\frac{M(y,\xi)}{(M^\delta_j)^{**}(\xi)}=\frac{M(y,\xi)}{M^\delta_j(\xi)}\frac{M^\delta_j(\xi)}{(M^\delta_j)^{**}(\xi)}.
		\end{equation}
		We estimate separately both quotients on the right hand side of the latter equality. By continuity of $M$ we find $\bar{y}\in \tilde{Q}^\delta_j$ such that $M^\delta_j(\xi)=M(\bar{y},\xi)$. Then using condition \ref{MTh} and the fact that $|y-\bar{y}|\leq 3\delta\sqrt{d}<\frac{1}{2}$ we get
		\begin{equation}\label{FirQuoEst}
			\frac{M(y,\xi)}{M(\bar y,\xi)}\leq \max\{\xi^{-\frac{A}{\log|y-\bar y|}}, B^{-\frac{A}{\log|y-\bar y|}}\}\leq \max\{\xi^{-\frac{A}{\log(3\delta\sqrt{d})}}, B^{-\frac{A}{\log(3\delta\sqrt{d})}}\}.
		\end{equation}
		In order to estimate the second quotient in \eqref{Quotient} we observe first that if $\xi\in[0,\infty)$ is such that $M^\delta_j(\xi)=(M^\delta_j)^{**}(\xi)$ then the statement is obvious. Therefore we assume that $M^\delta_j(\xi_0)>(M^\delta_j)^{**}(\xi_0)$ at some $\xi_0$. Due to continuity of $M^\delta_j$ and $(M^\delta_j)^{**}$ there is a neighborhood $U$ of $\xi_0$ such that $M^\delta_j>(M^\delta_j)^{**}$ on $U$. Consequently, $(M^\delta_j)^{**}$ is affine on $U$. Moreover, \ref{MTwo} implies that $m_1\leq M^\delta_j\leq m_2$, where $m_1$ and $m_2$ are convex. Therefore there are $\xi_1,\xi_2$ such that $U\subset(\xi_1,\xi_2)$, $M^\delta_j>(M^\delta_j)^{**}$ on $(\xi_1,\xi_2)$, $(M^\delta_j)^{**}(\xi_i)=M^\delta_j(\xi_i)$, $i=1,2$ and $(M^\delta_j)^{**}$ is an affine function on $[\xi_1,\xi_2]$, i.e., for $t\in[0,1]$
\begin{equation}\label{ConvexificationIsAffine}
	(M^\delta_j)^{**}(t\xi_1+(1-t)\xi_2)=tM^\delta_j(\xi_1)+(1-t)M^\delta_j(\xi_2).
\end{equation}
We note that $\xi_1>0$ is always assumed because it follows that $0=M^\delta_j(0)=(M^\delta_j)^{**}(0)$. Now, thanks to the continuity of $M$ we find $y_i\in\tilde{Q}^\delta_j$ such that $M^\delta_j(\xi_i)=M(y_i,\xi_i)$, $i=1,2$. Consequently, it follows from \eqref{ConvexificationIsAffine} that
\begin{equation}\label{ConvexificationIsAffineII}
	(M^\delta_j)^{**}(t\xi_1+(1-t)\xi_2)=tM(y_1,\xi_1)+(1-t)M(y_2,\xi_2).
\end{equation}
Denoting $\tilde\xi=t\xi_1+(1-t)\xi_2$ we get
\begin{equation}\label{QuoBiConEst}
	\frac{M^\delta_j\left(\tilde\xi\right)}{(M^\delta_j)^{**}\left(\tilde\xi\right)}\leq \frac{M\left(y_2,\tilde\xi\right)}{tM(y_1,\xi_1)+(1-t)M(y_2,\xi_2)}\leq\frac{tM(y_2,\xi_1)+(1-t)M(y_2,\xi_2)}{tM(y_1,\xi_1)+(1-t)M(y_2,\xi_2)}.
\end{equation}
Next, we observe that the definition of $M^\delta_j$ implies $M(y_1,\xi_1)=M^\delta_j(\xi_1)\leq M(y_2,\xi_1)$. We can assume without loss of generality that
\begin{equation}\label{MXiOneIneq}
	M(y_1,\xi_1)< M(y_2,\xi_1)
\end{equation}
because for $M(y_1,\xi_1)= M(y_2,\xi_1)$ inequality \eqref{QuoBiConEst} implies $M^\delta_j\leq(M^\delta_j)^{**}$ on $[\xi_1,\xi_2]$. Since we have always $M^\delta_j\geq(M^\delta_j)^{**}$ we arrive at $M^\delta_j=(M^\delta_j)^{**}$ on $[\xi_1,\xi_2]$.

Let us consider a function $h:[0,1]\rightarrow\eR$ defined by
\begin{equation*}
	h(t)=\frac{tM(y_2,\xi_1)+(1-t)M(y_2,\xi_2)}{tM(y_1,\xi_1)+(1-t)M(y_2,\xi_2)}.
\end{equation*}
Then we compute
\begin{equation*}
	h'(t)=\frac{(M(y_2,\xi_1)-M(y_1,\xi_1))M(y_2,\xi_2)}{(t(M(y_1,\xi_1)-M(y_2,\xi_2))+M(y_2,\xi_2))^2}.
\end{equation*}
Obviously, we have $h'>0$ on $(0,1)$ due to \eqref{MXiOneIneq}. Therefore the maximum of $h$ is attained at $t=1$, which implies
\begin{equation}
	\frac{M^\delta_j\left(\tilde\xi\right)}{(M^\delta_j)^{**}\left(\tilde\xi\right)}\leq\frac{M(y_2,\xi_1)}{M(y_1,\xi_1)}.
\end{equation}
Next, we apply condition \ref{MTh} and $\xi_1\leq\tilde\xi$ to infer
\begin{equation}\label{SecQuoEst}
	\frac{M^\delta_j\left(\tilde\xi\right)}{(M^\delta_j)^{**}\left(\tilde\xi\right)}\leq \max\{\xi_1^{\frac{-A}{\log|y_2-y_1|}}, B^{\frac{-A}{\log|y_2-y_1|}}\}\leq \max\{\xi^{\frac{-A}{\log|y_2-y_1|}}, B^{\frac{-A}{\log|y_2-y_1|}}\}\leq\max\{\xi^{\frac{-A}{\log(4\delta\sqrt{d})}}, B^{\frac{-A}{\log(4\delta\sqrt{d})}}\}
\end{equation}
since $y_1,y_2\in\tilde{Q}^\delta_j$ implies $|y_1-y_2|\leq 4\delta\sqrt{d}<\frac{1}{2}$. Combining \eqref{Quotient} with \eqref{FirQuoEst} and \eqref{SecQuoEst} yields
\begin{equation*}
	\frac{M(y,\xi)}{(M^\delta_j)^{**}(\xi)}\leq \max\{\xi^{\frac{-A}{\log(3\delta\sqrt{d})}}, B^{\frac{-A}{\log(3\delta\sqrt{d})}}\}\cdot \max\{\xi^{\frac{-A}{\log(4\delta\sqrt{d})}}, B^{\frac{-A}{\log(4\delta\sqrt{d})}}\}\leq \max\{\xi^{\frac{-2A}{\log(4\delta\sqrt{d})}}, B^{\frac{-2A}{\log(4\delta\sqrt{d})}}\}
\end{equation*}
which is the desired conclusion.
\end{proof}
\end{Lemma}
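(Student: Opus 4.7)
The plan is to estimate the ratio $M(y,\xi)/(M^\delta_j)^{**}(\xi)$ by factoring it as
$$\frac{M(y,\xi)}{(M^\delta_j)^{**}(\xi)}=\frac{M(y,\xi)}{M^\delta_j(\xi)}\cdot\frac{M^\delta_j(\xi)}{(M^\delta_j)^{**}(\xi)}$$
and bounding the two factors independently. The first factor is the easy part: by continuity of $M$ in the spatial variable (implicit in $M$ being a Carath\'eodory $\mathcal{N}$--function, together with the locally bounded structure from \ref{MTwo}), the infimum in the definition of $M^\delta_j(\xi)$ is attained at some $\bar y\in\tilde Q^\delta_j$. Since $y\in Q^\delta_j$ and $\bar y\in\tilde Q^\delta_j$ lie in concentric cubes, $|y-\bar y|\le 3\delta\sqrt d<\tfrac12$ provided $\delta_0$ is small, so \ref{MTh} applies directly and yields a bound of the desired log-type with $\log(3\delta\sqrt d)$ in the denominator.

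The substantive step is the second factor, which measures the gap between $M^\delta_j$ and its convex envelope. The idea is that on any maximal open interval $(\xi_1,\xi_2)$ where $M^\delta_j>(M^\delta_j)^{**}$, the biconjugate is affine and agrees with $M^\delta_j$ at the endpoints, so for $\xi=t\xi_1+(1-t)\xi_2$ one has
$$(M^\delta_j)^{**}(\xi)=tM^\delta_j(\xi_1)+(1-t)M^\delta_j(\xi_2).$$
Outside such intervals the two functions coincide and the ratio is $1$. Picking $y_i\in\tilde Q^\delta_j$ with $M^\delta_j(\xi_i)=M(y_i,\xi_i)$ by continuity, and exploiting convexity of $M(y_2,\cdot)$ to write $M^\delta_j(\xi)\le M(y_2,\xi)\le tM(y_2,\xi_1)+(1-t)M(y_2,\xi_2)$, the quotient is bounded by
$$\frac{tM(y_2,\xi_1)+(1-t)M(y_2,\xi_2)}{tM(y_1,\xi_1)+(1-t)M(y_2,\xi_2)}.$$

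A one-line derivative computation in $t$, using only that $M(y_2,\xi_1)\ge M(y_1,\xi_1)$ (which is immediate from the definition of $M^\delta_j$), shows this expression is monotone in $t$ and maximized at $t=1$, giving the clean bound $M(y_2,\xi_1)/M(y_1,\xi_1)$. Since $y_1,y_2\in\tilde Q^\delta_j$ forces $|y_1-y_2|\le 4\delta\sqrt d<\tfrac12$, applying \ref{MTh} to the pair $(y_1,y_2)$ at the scalar $\xi_1\le\xi$ produces the required log-H\"older estimate, with $\log(4\delta\sqrt d)$ now playing the role of $\log(E\delta)$. Multiplying the two factors yields \ref{M4} with constants $C,D,E,G$ read off from $A,B$ and $d$.

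The main obstacle I expect is the handling of the maximal affine interval $(\xi_1,\xi_2)$: one must argue it exists whenever $M^\delta_j(\xi)>(M^\delta_j)^{**}(\xi)$, that $\xi_1>0$ so that $\xi^{-A/\log(\cdot)}$ remains controlled, and that the trivial case $M(y_2,\xi_1)=M(y_1,\xi_1)$ already forces equality $M^\delta_j=(M^\delta_j)^{**}$ on $[\xi_1,\xi_2]$. The bounds $m_1\le M^\delta_j\le m_2$ from \ref{MTwo}, together with $m_1(0)=0$ and the fact that $(M^\delta_j)^{**}$ is the pointwise supremum of affine minorants, guarantee $\xi_1>0$ on any nontrivial gap interval and provide the structural information needed to make the envelope argument rigorous.
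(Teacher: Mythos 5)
Your proposal reproduces the paper's argument step for step: the same factorization of the ratio into $M(y,\xi)/M^\delta_j(\xi)$ and $M^\delta_j(\xi)/(M^\delta_j)^{**}(\xi)$, the same use of continuity to attain the infimum at a point of $\tilde Q^\delta_j$, the same reduction to a maximal affine interval $(\xi_1,\xi_2)$ of the convex envelope with $\xi_1>0$, the same convexity bound $M^\delta_j(\tilde\xi)\le tM(y_2,\xi_1)+(1-t)M(y_2,\xi_2)$, and the same monotone-in-$t$ calculation concluding with $M(y_2,\xi_1)/M(y_1,\xi_1)$ and a final application of \ref{MTh}. This is the paper's proof; no genuine difference in route.
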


\section{Auxiliary tools} \label{Ape2}
The first auxiliary tool is related to Young measures. The fundamental theorem on Young measures may be found in \cite{M96}. We only recall the lemma with properties of Young measures that will be used further. In the following $\mathcal{M}(\eR^d)$ stands for the space of bounded Radon measures on $\eR^d$.

\begin{Lemma}\cite[Corollary 3.2]{M96} \label{Lem:YMProp}
	Let a Young measure $\nu:\Omega\rightarrow \mathcal{M}(\eR^d)$ be generated by a sequence of measurable functions $\boldz^k:\Omega\rightarrow\eR^d$. Let $F:\Omega\times\eR^d\rightarrow\eR$ be a Carath\'eodory function. Let also assume that the negative part $F^-(\cdot,\boldz^k)$ is weakly relatively compact in $L^1(\Omega)$. Then
	\begin{equation*}
		\liminf_{k\rightarrow\infty}\int_\Omega F(x,\boldz^k(x))\dx\geq \int_\Omega\int_{\Rd} F(x,\boldzeta)\d\nu_x(\boldzeta)\dx.
	\end{equation*}
	If, in addition, the sequence of functions $x\mapsto |F|(x,\boldz^k(x))$ is weakly relatively compact in $L^1(\Omega)$ then
	\begin{equation*}
		F(\cdot,\boldz^k(\cdot))\rightharpoonup \int_{\eR^d}F(x,\boldzeta)\d\nu_x(\boldzeta)\dx.
	\end{equation*}
\end{Lemma}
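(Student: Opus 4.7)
The plan is to reduce both parts of the lemma to the classical form of the Fundamental Theorem of Young measures, which states that whenever $\{\boldz^k\}$ generates $\nu$ and $g:\Omega\times\eR^d\to\eR$ is Carath\'eodory with $\{g(\cdot,\boldz^k)\}_{k=1}^\infty$ uniformly integrable on $\Omega$, one has $g(\cdot,\boldz^k)\rightharpoonup \bar g$ weakly in $L^1(\Omega)$, where $\bar g(x):=\int_{\eR^d} g(x,\boldzeta)\d\nu_x(\boldzeta)$. The second assertion of the lemma is then immediate: by the Dunford--Pettis theorem the weak $L^1$-relative compactness of $\{|F|(\cdot,\boldz^k)\}$ is equivalent to uniform integrability, and applying the classical theorem with $g=F$ yields exactly the stated weak convergence.

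For the first assertion I would truncate $F$ from above by
\begin{equation*}
F_n(x,\boldzeta):=\min\{F(x,\boldzeta),\,n\},\qquad n\in\eN,
\end{equation*}
which is Carath\'eodory, satisfies $F_n\le F$ and $F_n\uparrow F$ pointwise, and obeys $F_n^{-}=F^{-}$ together with $F_n^{+}\le n$. Since $\Omega$ is of finite measure, the estimate $|F_n(\cdot,\boldz^k)|\le F^{-}(\cdot,\boldz^k)+n$ combined with the assumed uniform integrability of $\{F^{-}(\cdot,\boldz^k)\}$ gives that $\{F_n(\cdot,\boldz^k)\}_{k=1}^\infty$ is uniformly integrable for each fixed $n$. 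The Fundamental Theorem then provides $F_n(\cdot,\boldz^k)\rightharpoonup \bar F_n$ in $L^1(\Omega)$ with $\bar F_n(x):=\int_{\eR^d} F_n(x,\boldzeta)\d\nu_x(\boldzeta)$; testing against the constant function $1\in L^{\infty}(\Omega)$ yields $\int_\Omega F_n(x,\boldz^k(x))\dx \to \int_\Omega \bar F_n(x)\dx$, and monotonicity $F\ge F_n$ gives $\liminf_{k\to\infty}\int_\Omega F(x,\boldz^k(x))\dx \ge \int_\Omega \bar F_n(x)\dx$ for every $n$.

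The remaining step is the passage $n\to\infty$. Writing $F_n=F_n^{+}-F^{-}$ and noting that $F_n^{+}\uparrow F^{+}$, the monotone convergence theorem applied under the probability measure $\nu_x$ yields $\bar F_n(x)\uparrow \bar F(x):=\int_{\eR^d} F(x,\boldzeta)\d\nu_x(\boldzeta)$ for a.a.\ $x\in\Omega$; a second use of monotone convergence on $\Omega$ then gives $\int_\Omega \bar F_n\dx \to \int_\Omega \bar F\dx$ (possibly $+\infty$), which combined with the previous inequality completes the proof.

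The step I expect to require the most care is the last one: one must verify that $\bar F(x)$ is unambiguously defined, i.e.\ that no $(+\infty)-(+\infty)$ ambiguity arises when decomposing $\bar F_n=\overline{F_n^{+}}-\overline{F^{-}}$ before letting $n\to\infty$. This reduces to showing $\overline{F^{-}}\in L^1(\Omega)$, which follows from the hypothesis that $\{F^{-}(\cdot,\boldz^k)\}$ is weakly relatively compact in $L^1(\Omega)$: along a weak-$L^1$ subsequence the limit function belongs to $L^1(\Omega)$, and applying the Fundamental Theorem to the nonpositive Carath\'eodory integrand $-F^{-}$ identifies this limit as $\overline{F^{-}}$, so the decomposition is legitimate and the monotone convergence argument goes through.
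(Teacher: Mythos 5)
This lemma is quoted in the paper directly from \cite[Corollary~3.2]{M96} and is not proved there, so there is no internal argument to compare against. Your proof is a correct, self-contained derivation of that corollary from the standard Fundamental Theorem of Young measures, and it is essentially the textbook route: truncate from above by $F_n=\min\{F,n\}$, observe $F_n^-=F^-$ so uniform integrability is inherited, apply the FTYM to each $F_n$, and let $n\to\infty$ by monotone convergence. The second assertion you correctly reduce to the FTYM via Dunford--Pettis. Two small points of exposition you may wish to tighten, neither a genuine gap: first, there is a sign slip when you speak of applying the FTYM to the nonpositive integrand $-F^{-}$ and then identifying the weak limit as $\overline{F^{-}}$ — apply it to $F^{-}$ itself (Carath\'eodory, nonnegative, uniformly integrable), which gives $F^{-}(\cdot,\boldz^k)\rightharpoonup\overline{F^{-}}$ and hence $\overline{F^{-}}\in L^1(\Omega)$. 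Second, in the passage $n\to\infty$ the sequence $\bar F_n$ is increasing but not nonnegative, so the monotone convergence theorem should be applied after shifting, e.g. to $\overline{F_n^{+}}=\bar F_n+\overline{F^{-}}\geq 0$, which is precisely what the integrability of $\overline{F^{-}}$ that you establish is needed for; since $\int_\Omega\overline{F_n^{+}}\dx\uparrow\int_\Omega\overline{F^{+}}\dx$ and $\int_\Omega\overline{F^{-}}\dx<\infty$, the claimed limit $\int_\Omega\bar F_n\dx\to\int_\Omega\bar F\dx$ (possibly $+\infty$) follows cleanly.
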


Second result, we recall here is of functional analytic type.
\begin{Lemma}\label{Lem:Duality}
	Let $X$ be a Banach space, $V$ be a subspace of $X$, $g$ be a closed, convex functional on $X$ that is continuous at some $x\in V$.
	Then
	\begin{equation}\label{IdDual}
     \inf_{x\in V} \{g(x)-\langle \eta,x\rangle\}+\inf_{\xi\in V^\bot} g^*(\eta+\xi)=0
	\end{equation}
	for all  $\eta\in X^*$.
	\begin{proof}
	One deduces by definition of a convex conjugate that	
	\begin{equation}\label{GConj}
\forall\xi\in X^*: (g-\eta)^*(\xi)=\sup_{x\in X}\{\langle \eta+\xi,x\rangle-g(x)\}=g^*(\eta+\xi).
	\end{equation}
	According to \cite[Theorem 14.2]{ZKO94}
	\begin{equation*}
		\inf_{x\in V} A(x)+\inf_{x^*\in V^\bot}A^*(x^*)=0
	\end{equation*}
	for a closed, convex functional $A$ that is continuous at some $x\in V$. We set $A(x):=(g-\eta)(x)$ and the expression for $A^*$ determined by~\eqref{GConj} in the latter equality to conclude~\eqref{IdDual}.
	\end{proof}
\end{Lemma}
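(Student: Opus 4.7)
My plan is to reduce the stated identity to a standard Fenchel--Rockafellar duality result on a subspace by translating out the linear term $\eta$. Specifically, define $h(x) := g(x) - \langle \eta, x\rangle$. Since $\eta \in X^*$ is continuous and linear, $h$ inherits from $g$ the properties of being closed, convex, and continuous at the same distinguished point of $V$. A direct computation of the Fenchel conjugate gives
\begin{equation*}
h^*(\xi) = \sup_{x \in X}\{\langle \xi, x\rangle - g(x) + \langle \eta, x\rangle\} = g^*(\xi + \eta),
\end{equation*}
so the claim \eqref{IdDual} is equivalent to the identity
\begin{equation*}
\inf_{x \in V} h(x) \,+\, \inf_{\xi \in V^{\bot}} h^*(\xi) \;=\; 0.
\end{equation*}

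I would then prove this reduced statement by the standard two-part argument. The easy direction (weak duality, ``$\ge 0$'') is immediate: for any $x \in V$ and $\xi \in V^{\bot}$ one has $\langle \xi, x\rangle = 0$, so the Fenchel--Young inequality yields $h(x) + h^*(\xi) \ge \langle \xi, x\rangle = 0$; taking infima gives the inequality. For the reverse direction, denote $\alpha := \inf_{x \in V} h(x)$ (which is finite or $-\infty$; in the former case proceed, in the latter the identity is trivial once weak duality is combined with monotonicity arguments). Consider in $X \times \mathbb{R}$ the two convex sets $\mathcal{E} := \{(x,t) : h(x) < t\}$ (the strict epigraph of $h$) and $\mathcal{S} := V \times \{\alpha\}$. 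They are disjoint by definition of $\alpha$, and the continuity of $g$ (hence of $h$) at some point of $V$ ensures that $\mathcal{E}$ has nonempty interior. Hahn--Banach separation then produces a nonzero $(\xi^*, \lambda) \in X^* \times \mathbb{R}$ separating $\mathcal{E}$ from $\mathcal{S}$.

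The remaining routine work is to verify that $\lambda < 0$ (using the vertical direction in the epigraph), normalize $\lambda = -1$, and read off from the separation inequality that $\xi^* \in V^{\bot}$ (by taking $x \in V$ arbitrary and comparing $\mathcal{S}$ against itself) and that $h^*(\xi^*) \le -\alpha$ (by computing the supremum over $\mathcal{E}$). This gives $\inf_{V^{\bot}} h^* \le -\alpha$, completing strong duality. Alternatively, one can quote the subspace version of the Fenchel--Rockafellar theorem directly, as is done in \cite[Theorem~14.2]{ZKO94}, and this is by far the shortest route; my plan is to cite that reference after the translation step. The main obstacle is the separation argument itself: ensuring that the continuity hypothesis at a point of $V$ is strong enough to guarantee nonempty interior of the epigraph in $X \times \mathbb{R}$ and to rule out the degenerate case $\lambda = 0$. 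Once these are handled, the algebraic identification of the separating functional with an element of $V^{\bot}$ realizing the dual infimum is immediate.
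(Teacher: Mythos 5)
Your declared path --- translating out the linear term via $h := g - \langle\eta,\cdot\rangle$, computing $h^*(\xi) = g^*(\xi+\eta)$, and then citing the subspace Fenchel--Rockafellar duality from \cite[Theorem~14.2]{ZKO94} --- is precisely the paper's proof. The supplementary Hahn--Banach separation sketch you offer for the cited theorem is a correct standard argument but is not part of your final route (and in any case the remark that the $\inf_{x\in V} h(x) = -\infty$ case is ``trivial'' is off: weak duality then forces $\inf_{\xi\in V^\bot}h^*(\xi) = +\infty$, so the sum is indeterminate rather than trivially zero).
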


\section{Existence of solutions to elliptic problems}\label{Ape3}
To the best of authors' knowledge only the result from \cite{GMW12} concerns the existence of weak solutions of elliptic problems in which the growth condition is given by an anisotropic inhomogeneous ${\mathcal N}$--function. In \cite{GMW12},  only a scalar problem and an ${\mathcal N}$--function satisfying the condition~\ref{MTw2} are considered and for \ref{EMD} one could follow exactly the same procedure without the need of $L^{\infty}$ truncation. In this part we show that the result in \cite{GMW12} can be extended also to the vector valued problems provided we assume that the domain is star--shaped, i.e., the assumption~\ref{MTw} holds.

\begin{Theorem}\label{Thm:WSExistMHoeldC}
	Let $N\geq 1$, $\Omega\subset\Rd,d\geq 2$ be a star--shaped domain, an operator $\boldA$ satisfy~\ref{AO},\ref{ATh} and~\ref{AF}. Let an ${\mathcal N}$--function $M:\Omega\times\eR^{d\times N}\rightarrow[0,\infty)$  fulfill \ref{MTwo} and \ref{M4} with $\Omega$ replacing $Y$. Then the problem
	\begin{equation*}
		\begin{alignedat}{2}
			\div \boldA(x,\nabla \boldu(x))&=\div \boldF(x)&&\textrm{ in }\Omega,\\
			\boldu&=0&&\textrm{ on }\partial\Omega
		\end{alignedat}
	\end{equation*}
	possesses a unique weak solution, which is a function $\boldu\in V^M_0$ such that for all $\bphi\in V^M_0$
	\begin{equation}\label{MLogHWeakForm}
		\int_\Omega \boldA(x,\nabla \boldu) \cdot\nabla\bphi\dx=\int_\Omega \boldF\cdot\nabla\bphi\dx.
	\end{equation}
	\begin{proof}
		The construction of a weak solution $\boldu$ will be performed in several steps following the approach from \cite{GMW12}. First, we consider for $\delta\in(0,1)$ an auxiliary problem: to find $\boldu^\delta\in V^m_0$ such that
		\begin{equation}\label{ApprSys}
			\int_\Omega \boldB(x,\nabla \boldu^\delta(x))\cdot\nabla\bphi(x)\dx=\int_\Omega \boldF(x)\cdot\nabla\bphi(x)\dx\text{ for all }\bphi\in C^\infty_c(\Omega;\eR^N),
		\end{equation}
		where we denoted $\boldB(x,\boldzeta):=\boldA(x,\boldzeta)+\delta \NabOv m(\boldzeta)$ and $\NabOv m(\boldzeta):=\tilde{m}'(|\boldzeta|)\frac{\boldzeta}{|\boldzeta|}$. The ${\mathcal N}$--function $\tilde{m}:[0,\infty)\rightarrow[0,\infty)$ is such that $\tilde{m}^*$ satisfies $\Delta_2$--condition and $\tilde{m}(|\boldzeta|)\geq\sup_{x\in\Omega} M(x,\boldzeta)$. Moreover, the identity
		\begin{equation}\label{FYEqm}
			\NabOv m(\boldzeta)\cdot\boldzeta=m(\boldzeta)+m^*(\NabOv m(\boldzeta))
		\end{equation}
		holds.
		We show the existence of $\boldu^\delta$ and derive estimates of $\nabla \boldu^\delta$ and $\boldA(\cdot,\nabla \boldu^\delta)$ in $L^M(\Omega;\eR^{d\times N})$, $L^{M^*}(\Omega;\eR^{d\times N})$ respectively that are uniform with respect to $\delta\in(0,1)$. Having the uniform estimates we pass to the limit $\delta\rightarrow 0_+$ to obtain a weak solution of the initial problem. The reason for such a modification is that from now the leading $\mathcal{N}$--function is independent of the spatial variable and its conjugate satisfies $\Delta_2$--condition, which may not be the case in the original setting.\\
		\textbf{Step 1}:
			In order to obtain the existence of $\boldu^\delta$ for fixed $\delta\in(0,1)$ we employ the results on the so--called~$(S_m)$ class operators from \cite{MT99}. It is necessary to verify assumption of \cite[Theorem 4.3]{MT99}. We omit the verification since it is performed in the same manner as in the proof of \cite[Theorem 2.1]{GMW12}. The existence of a weak solution $\boldu^\delta$ of~\eqref{ApprSys} then follows by \cite[Theorem 5.1]{MT99}.\\
		\textbf{Step 2}: Now, we derive estimates uniform with respect to $\delta$. Since $\boldu^\delta\in V^m_0$, by Theorem~\ref{Thm:ModDensity} (claim 2) there is a sequence $\{\boldu^{\delta,k}\}_{k=1}^\infty\subset C^\infty_c(\Omega;\eR^N)$ such that $\nabla \boldu^{\delta,k}\ModConv{m} \nabla \boldu^\delta$ as $k\rightarrow\infty$. As $\boldu^{\delta,k}$ for each $k$ can be used as a test function in~\eqref{ApprSys}, Lemma~\ref{Lem:ProdConv} then implies
		\begin{equation}\label{st.point}
			\int_\Omega (\boldA(x,\nabla \boldu^\delta(x))+\delta\NabOv m(\nabla \boldu^\delta(x)))\cdot\nabla \boldu^\delta(x)\dx=\int_\Omega \boldF(x)\cdot\nabla \boldu^\delta(x)\dx.
		\end{equation}
		We get by~\ref{ATh}, \eqref{FYEqm}, the Young inequality using also the fact that $c\in (0,1]$ in~\ref{ATh} together with the convexity of $M$ with respect to the second variable that
		\begin{equation*}
			\int_{\Omega} \frac{c}{2}M(x,\nabla \boldu^\delta(x)) + c M^*(x,\boldA(x,\nabla \boldu^\delta(x)) + \delta m(\nabla \boldu^\delta(x))+ \delta m^*(\NabOv m(\boldu^\delta(x))\dx\leq \int_\Omega M^*\left(x,\frac{2}{c}\boldF(x)\right)\dx.
		\end{equation*}
		Hence we have
		\begin{equation}\label{AEDeltaAppr}
		\begin{split}
			\int_\Omega M(x,\nabla \boldu^\delta(x))\dx&\leq c,\\
			\int_\Omega M^*(x,\boldA(x,\nabla \boldu^\delta(x))\dx&\leq c,\\
			\int_\Omega \delta m^*(\NabOv m(\boldu^\delta(x))\dx&\leq c.
			\end{split}
		\end{equation}
		Consequently, we obtain the existence of a sequence $\{\delta_k\}_{k=1}^\infty$ such that $\delta_k\rightarrow 0$ as $k\rightarrow \infty$ and denoting $\boldA^k=\boldA(\cdot,\nabla \boldu^{\delta_k})$, $\uk=\boldu^{\delta_k}$ and $\boldB^k=\boldB(\cdot,\nabla \boldu^{\delta_k}(x))$ we have
		\begin{equation}\label{ConvApprDelta}
			\begin{alignedat}{2}
				\nabla \uk&\WSCon\nabla \boldu&&\text{ in }L^{M}(\Omega;\eR^{d\times N}),\\
				\boldA^k&\WSCon\bar\boldA&&\text{ in }L^{M^*}(\Omega; \eR^{d\times N}),\\
\boldB^k&\WCon \bar\boldA&&\text{ in }L^{1}(\Omega; \eR^{d\times N})
			\end{alignedat}
		\end{equation}
		as $k\rightarrow\infty$.\\
		\textbf{Step 3}:
We shall show that
		\begin{equation}\label{LimEq}
			\limsup_{k\rightarrow\infty}\int_\Omega \boldA^k\cdot\nabla \uk \dx\leq\int_\Omega \bar\boldA\cdot\nabla \boldu \dx.
		\end{equation}
Adding the limit $k\rightarrow\infty$ in~\eqref{st.point} with $\delta:=\delta_k$ and $\bphi:=\uk$ we get by using~\eqref{ConvApprDelta}$_1$ that
		\begin{equation}\label{LimitAppSTestSol}
			\limsup_{k\rightarrow\infty}\int_\Omega \boldA(x,\nabla \uk)\cdot\nabla \uk \dx \le \lim_{k\rightarrow\infty}\int_\Omega \boldB(x,\nabla \uk)\cdot\nabla \uk \dx=\lim_{k\rightarrow\infty} \int_\Omega \boldF\cdot\nabla\uk \dx=\int_\Omega \boldF\cdot\nabla\boldu \dx.
		\end{equation}
		Employing~\eqref{ConvApprDelta}$_3$ we can pass to the limit $k\rightarrow\infty$ in~\eqref{ApprSys} to obtain
\begin{equation}\label{ApprSys23}
			\int_\Omega \bar\boldA\cdot\nabla\bphi \dx=\int_\Omega \boldF\cdot\nabla\bphi\dx\qquad \text{ for all }\bphi\in C^\infty_c(\Omega;\eR^N).
		\end{equation}
Next, for any $\bphi\in V^M_0$ we can use the assumptions on the domain $\Omega$ and $M$, and by Theorem~\ref{Thm:ModDensity} (claim 2) find a sequence $\{\bphi^{k}\}_{k=1}^\infty\subset C^\infty_c(\Omega;\eR^N)$ such that $\nabla \bphi^{k}\ModConv{M} \nabla\bphi$ as $k\rightarrow\infty$. Thus, we can use $\bphi^k$ in~\eqref{ApprSys23} and by using Lemma~\ref{Lem:ProdConv}, we deduce the identity
\begin{equation}
			\int_\Omega \bar\boldA\cdot\nabla \bphi \dx=\int_\Omega \boldF\cdot\nabla \bphi \dx \text{ for all }\bphi\in V^M_0.\label{betteri}
\end{equation}
Inserting $\bphi:=\boldu$ into~\eqref{betteri} yields
		\begin{equation}\label{ResSysTestSol}
			\int_\Omega \bar\boldA\cdot\nabla \boldu \dx=\int_\Omega \boldF\cdot\nabla \boldu \dx.
		\end{equation}
		We conclude~\eqref{LimEq} by comparing~\eqref{LimitAppSTestSol} and~\eqref{ResSysTestSol}.\\
		\textbf{Step 4}: To finish the existence proof it remains to show that
		\begin{equation}\label{LimIden}
			\bar\boldA(x)=\boldA(x,\nabla \boldu(x))\text{ a.e. in }\Omega.
		\end{equation}
Indeed, once we have~\eqref{LimIden}, we can combine it with~\eqref{betteri} to obtain~\eqref{MLogHWeakForm}. Thus, we focus on~\eqref{LimIden}.
		Since $\boldA(\cdot,0)=0$ and $\boldA$ is strictly monotone, one sees immediately that the negative part of $\boldA(x,\nabla \uk)\cdot\nabla \uk$ vanishes. Thus it is relatively weakly compact in $L^1(\Omega)$. By the second part of Lemma~\ref{Lem:YMProp} we infer
		\begin{equation*}
		\liminf_{k\rightarrow\infty}\int_\Omega \boldA(x,\nabla \uk)\cdot\nabla \uk\dx\geq \int_\Omega\int_{\eR^{d\times N}}\boldA(x,\boldzeta)\cdot\boldzeta\d\nu_x(\boldzeta)\dx,
		\end{equation*}
		where $\nu_x$ is the Young measure generated by $\{\nabla \uk\}_{k=1}^\infty$. Comparing the latter inequality with~\eqref{LimEq} we have
		\begin{equation}\label{YMeasIneq}
			\int_\Omega\int_{\eR^{d\times N}}\boldA(x,\boldzeta)\cdot\boldzeta\d\nu_x(\boldzeta)\dx\leq \int_\Omega\bar\boldA\cdot\nabla \boldu\dx.
		\end{equation}
		Let us define $h(x,\boldzeta):=\left(\boldA(x,\boldzeta)-\boldA(x,\nabla \boldu)\right)\cdot\left(\boldzeta-\nabla \boldu\right)$. Then it follows from~\ref{AF} that
		\begin{equation}\label{IntHNonneg}
			\int_\Omega\int_{\eR^{d\times N}}h(x,\boldzeta)\d\nu_x(\boldzeta)\dx\geq 0.
		\end{equation}
		As $\boldA$ is a Carath\'eodory function and the sequences $\{\nabla \uk\}_{k=1}^\infty$ and $\{\boldA^k\}_{k=1}^\infty$ are weakly relatively compact in $L^1(\Omega)$ due to~\eqref{AEDeltaAppr}$_{1,2}$, the second part of Lemma~\ref{Lem:YMProp} implies
		\begin{equation}\label{LimitAsIntOfYMeas}
			\begin{alignedat}{2}
				\nabla \boldu&=\int_{\eR^{d\times N}}\boldzeta\d\nu_x(\boldzeta)&&\text{ a.e. in }\Omega,\\
				\bar\boldA&=\int_{\eR^{d\times N}}\boldA(x,\boldzeta)\d\nu_x(\boldzeta)&&\text{ a.e. in }\Omega.
			\end{alignedat}
		\end{equation}
		Using these identities we deduce that
		\begin{equation*}
		\int_\Omega\int_{\eR^{d\times N}} h(x,\boldzeta)\d\nu_x(\boldzeta)\dx=\int_\Omega \int_{\eR^{d\times N}}\boldA(x,\boldzeta)\cdot\boldzeta\d\nu_x(\boldzeta)\dx-\int_\Omega\bar\boldA\cdot\nabla \boldu\dx\leq 0
		\end{equation*}
		by~\eqref{YMeasIneq}. It follows from~\eqref{IntHNonneg} that $\int_{\eR^{d\times N}}h(x,\boldzeta)\d\nu_x(\boldzeta)=0$ a.e. in $\Omega$. Since $\nu_x$ is a probability measure and $\boldA(x,\cdot)$ is strictly monotone, we conclude for a.a. $x\in\Omega$ that  $\nu_x=\delta_{\nabla \boldu(x)}$ a.e. in $\Omega$ and inserting this into~\eqref{LimitAsIntOfYMeas}$_2$ we conclude~\eqref{LimIden}.\\
		\textbf{Step 5}: In order to show uniqueness of a weak solution, we suppose that functions $\boldu_1,\boldu_2\in V^M_0$ fulfill~\eqref{MLogHWeakForm}. Taking the difference of weak formulation with $\bphi:=\boldu_1-\boldu_2$ yields
		\begin{equation*}
			\int_\Omega \left(\boldA(x,\nabla \boldu_1)-\boldA(x,\nabla \boldu_2)\right)\cdot\nabla(\boldu_1-\boldu_2)\dx=0.
		\end{equation*}
Hence we obtain by~\ref{AF} that $\nabla (\boldu_1-\boldu_2)=0$ a.e. in $\Omega$ and since the trace of $\boldu_1-\boldu_2$ is zero on $\partial\Omega$ we conclude $\boldu_1=\boldu_2$ a.e. in $\Omega$.
	\end{proof}
\end{Theorem}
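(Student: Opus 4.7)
My plan follows the regularization scheme sketched in the excerpt: I approximate $\boldA$ by a better-behaved operator, use standard existence theory for the approximation, derive uniform estimates, and then pass to the limit using Young measures together with the density result in modular topology supplied by Theorem~\ref{Thm:ModDensity}. Concretely, I first fix a spatially independent $\mathcal{N}$-function $m$ dominating $M$ pointwise whose conjugate $m^*$ satisfies the $\Delta_2$-condition, and set $\boldB(x,\boldzeta):=\boldA(x,\boldzeta)+\delta\NabOv m(\boldzeta)$ with $\NabOv m(\boldzeta):=m'(|\boldzeta|)\boldzeta/|\boldzeta|$. The point of this modification is that the leading growth for the perturbed problem is controlled by a classical Orlicz function independent of $x$, putting the regularized problem into the standard framework of $(S_m)$-type pseudomonotone operators on $V^m_0$. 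An application of \cite[Theorem 4.3, Theorem 5.1]{MT99} yields, for each $\delta\in(0,1)$, a weak solution $\boldu^\delta\in V^m_0$ of the perturbed problem~\eqref{ApprSys}.

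Next I would derive $\delta$-uniform estimates. Using Theorem~\ref{Thm:ModDensity} (claim 2, which is precisely where the star-shapedness enters) I approximate $\boldu^\delta$ by $\{\boldu^{\delta,k}\}\subset C^\infty_c(\Omega;\eR^N)$ in the modular topology of $L^m$ and test~\eqref{ApprSys} with these functions; Lemma~\ref{Lem:ProdConv} then lets me pass $k\to \infty$ to insert $\boldu^\delta$ itself as a test function. Combining~\ref{ATh}, the Fenchel--Young identity $\NabOv m(\boldzeta)\cdot\boldzeta=m(\boldzeta)+m^*(\NabOv m(\boldzeta))$, and the Young inequality applied to the $\boldF$-term, I obtain uniform bounds on $\int M(x,\nabla \boldu^\delta)$, $\int M^*(x,\boldA(x,\nabla \boldu^\delta))$, and $\delta\int m^*(\NabOv m(\nabla \boldu^\delta))$. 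These give a sequence $\delta_k\to 0$ and limits $\nabla \boldu^k \WSCon \nabla \boldu$ in $L^M$, $\boldA^k \WSCon \bar\boldA$ in $L^{M^*}$, and (since the $\delta$-part vanishes in $L^1$) $\boldB^k \WCon \bar\boldA$ in $L^1$. Passing to the limit $k\to\infty$ in~\eqref{ApprSys} tested against $\bphi\in C^\infty_c(\Omega;\eR^N)$ yields the limit identity against smooth compactly supported test functions, and a second application of Theorem~\ref{Thm:ModDensity} combined with Lemma~\ref{Lem:ProdConv} extends it to $\bphi\in V^M_0$.

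The main obstacle is the nonlinear identification $\bar\boldA=\boldA(\cdot,\nabla \boldu)$. Since $L^M$ and $L^{M^*}$ are not reflexive and smooth functions are not norm-dense, I cannot invoke Minty's monotonicity trick in its standard form. I would instead use the Young-measure argument: bounding from above
\begin{equation*}
\limsup_{k\to\infty}\int_\Omega \boldA^k\cdot\nabla \boldu^k\,\dx \le \lim_{k\to\infty}\int_\Omega \boldB^k\cdot\nabla \boldu^k\,\dx = \lim_{k\to\infty}\int_\Omega \boldF\cdot\nabla \boldu^k\,\dx=\int_\Omega \boldF\cdot\nabla \boldu\,\dx=\int_\Omega \bar\boldA\cdot\nabla \boldu\,\dx,
\end{equation*}
where the last equality uses the extended identity for $\bphi=\boldu$. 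Lemma~\ref{Lem:YMProp} then gives $\int_\Omega\int_{\eR^{d\times N}}\boldA(x,\boldzeta)\cdot\boldzeta\,\d\nu_x(\boldzeta)\dx\le\int_\Omega \bar\boldA\cdot\nabla \boldu\dx$ with $\nu_x$ the Young measure generated by $\{\nabla \boldu^k\}$, while strict monotonicity~\ref{AF} yields $\int h(x,\boldzeta)\d\nu_x(\boldzeta)\ge 0$ for $h(x,\boldzeta):=(\boldA(x,\boldzeta)-\boldA(x,\nabla \boldu))\cdot(\boldzeta-\nabla \boldu)$. Using the barycentre identities $\nabla\boldu=\int\boldzeta\,\d\nu_x$ and $\bar\boldA=\int\boldA(x,\boldzeta)\,\d\nu_x$ (another application of Lemma~\ref{Lem:YMProp}, justified by the $L^1$-weak compactness of the two sequences), I conclude $\int h\,\d\nu_x=0$ a.e., hence $\nu_x=\delta_{\nabla \boldu(x)}$ by strict monotonicity and probability, so $\bar\boldA=\boldA(\cdot,\nabla \boldu)$ a.e.

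Uniqueness is then straightforward: given two solutions $\boldu_1,\boldu_2\in V^M_0$, subtracting the two weak formulations and inserting the difference $\boldu_1-\boldu_2\in V^M_0$ as a test function gives $\int_\Omega(\boldA(x,\nabla \boldu_1)-\boldA(x,\nabla \boldu_2))\cdot\nabla(\boldu_1-\boldu_2)\dx=0$, whence $\nabla \boldu_1=\nabla \boldu_2$ a.e. by~\ref{AF}, and the zero boundary trace closes the argument. I expect the delicate point to be the step that passes from $\bphi\in C^\infty_c$ to $\bphi\in V^M_0$ in the limit equation (and, in particular, testing with the solution itself), because this is where the non-density of smooth functions would ordinarily obstruct the argument, and where assumption~\ref{M4} together with the star-shaped geometry intervenes through Theorem~\ref{Thm:ModDensity} to make modular approximation possible.
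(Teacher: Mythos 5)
Your proposal is correct and follows essentially the same route as the paper's own proof: the $\delta$-regularization by $\NabOv m$ with a spatially independent dominating $\mathcal{N}$-function whose conjugate satisfies $\Delta_2$, existence via the $(S_m)$-class results of \cite{MT99}, $\delta$-uniform modular estimates obtained by testing with smooth modular approximations (Theorem~\ref{Thm:ModDensity}, claim 2, where star-shapedness enters) and Lemma~\ref{Lem:ProdConv}, the limsup energy inequality, the Young-measure identification of $\bar\boldA=\boldA(\cdot,\nabla\boldu)$, and uniqueness by strict monotonicity. You also correctly single out the passage from $C^\infty_c$ test functions to $V^M_0$ (and testing with the solution itself) as the step where \ref{M4} and the geometry are genuinely needed, which is exactly how the paper handles it.
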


Similarly, as in the case of the monotone operator $\boldA$, we shall show certain properties of the minimizers to convex functional generated by the $\mathcal{N}$--function $M$. For simplicity, we state the following results only for spatially periodic setting, but they can be easily generalized also to the Dirichlet case. The main goal of the section is the following Lemma.
\begin{Lemma}\label{Lem:infjemin}
Let $M:Y\times \eR^{d\times N}\rightarrow[0,\infty)$ be an $\mathcal{N}$--function. Then for arbitrary $\boldxi \in \eR^{d\times N}$ there exists $\tilde\boldu\in V^M_{per}$ such that for all $\boldw\in V^M_{per}$ there holds
\begin{equation}
\int_Y M(y,\boldxi+\nabla \tilde\boldu(y))\dy \le \int_Y M(y,\boldxi+\nabla \boldw(y))\dy. \label{mindef}
\end{equation}
In addition, if $M$ is strictly convex then the minimizer is unique. Furthermore, if $M$ satisfies~\ref{MTh} then
\begin{equation}
\int_Y M(y,\boldxi+\nabla \tilde\boldu(y))\dy  = \inf_{\boldw\in W^{1}_{per}E^M(Y;\eR^N)}\int_Y M(y,\boldxi+\nabla \boldw(y))\dy. \label{mindef2}
\end{equation}
\end{Lemma}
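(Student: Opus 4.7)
The argument has three components: existence via the direct method, uniqueness from strict convexity, and the identity~\eqref{mindef2} via modular density. I would first establish existence. Setting $I$ to be the infimum in \eqref{mindef}, testing with $\boldw\equiv\bzero$ and using $M(y,\boldxi)\le m_2(|\boldxi|)$ from \ref{MTwo} shows $I<\infty$. For any minimizing sequence $\{\boldw^k\}\subset V^M_{per}$, the lower bound $m_1(|\boldxi+\nabla\boldw^k|)\le M(y,\boldxi+\nabla\boldw^k)$ together with the zero--mean Poincar\'e inequality gives a uniform bound in $W^{1,m_1}_{per}$; a weak compactness argument supplies $\tilde\boldu\in W^{1,m_1}_{per}$ with $\boldw^k\rightharpoonup\tilde\boldu$ in $W^{1,1}_{per}$, and Lemma~\ref{Lem:SemCon} applied to $\Phi(y,\boldz):=M(y,\boldxi+\boldz)$ yields $\int_Y M(y,\boldxi+\nabla\tilde\boldu)\dy\le I$. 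Combining this bound with the integrability of $M(y,\boldxi)$ (a constant in $y$) through convexity of $M$ upgrades to $\nabla\tilde\boldu\in L^M(Y;\eR^{d\times N})$, so $\tilde\boldu\in V^M_{per}$ attains the infimum.

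For uniqueness under strict convexity of $M(y,\cdot)$, if $\boldu_1,\boldu_2$ are two minimizers then $\tfrac12(\boldu_1+\boldu_2)\in V^M_{per}$ is admissible; strict convexity forces $\nabla\boldu_1=\nabla\boldu_2$ a.e.\ on $Y$ (otherwise the midpoint would strictly decrease $J$), and the zero--mean normalization built into $V^M_{per}$ then gives $\boldu_1=\boldu_2$.

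For the identity~\eqref{mindef2}, the inequality $\le$ is automatic since $W^{1}_{per}E^M\subset V^M_{per}$ makes the infimum over the smaller space no smaller than $\int_Y M(y,\boldxi+\nabla\tilde\boldu)\dy$. For the reverse I would exploit \ref{MTh} (via \ref{M4}) through the modular density result of Lemma~\ref{Thm:ModDensity}(3) to obtain a sequence $\{\boldw^n\}\subset C^\infty_{per}(Y;\eR^N)\subset W^{1}_{per}E^M$ with $\nabla\boldw^n\ModConvM\nabla\tilde\boldu$, and show that $\int_Y M(y,\boldxi+\nabla\boldw^n)\dy\to\int_Y M(y,\boldxi+\nabla\tilde\boldu)\dy$. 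This last convergence is the main obstacle, because without the $\Delta_2$ condition modular convergence does not transfer directly to convergence of the $M$-integrals. I would resolve it exactly as in the proof of~\eqref{justi} inside Lemma~\ref{Lem:2.10}: introduce a measurable selection $\boldA(y,\cdot)\in\partial_{\boldz}M(y,\cdot)$ from the convex subdifferential, run the Minty-type truncation argument with $\boldW_\lambda:=(\boldxi+\nabla\tilde\boldu)\chi_{\{|\boldxi+\nabla\tilde\boldu|\le\lambda\}}$ to produce weak $L^1$ convergence of $\{\boldA(y,\boldxi+\nabla\boldw^n)\cdot(\boldxi+\nabla\boldw^n)\}$, convert the resulting equi-integrability into equi-integrability of $\{M(y,\boldxi+\nabla\boldw^n)\}$ through the Fenchel identity $\boldA\cdot\boldz=M+M^*$, and conclude by Vitali's theorem using the a.e.\ convergence provided by Lemma~\ref{Lem:MConvEquiv}.
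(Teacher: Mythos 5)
Your existence and uniqueness steps are fine. The gap is in the proof of \eqref{mindef2}. You propose to approximate the direct--method minimizer $\tilde\boldu$ by a modular--density sequence $\{\boldw^n\}\subset C^\infty_{per}(Y;\eR^N)$ from Lemma~\ref{Thm:ModDensity} and then ``run the Minty-type truncation argument'' from the proof of~\eqref{justi} in Lemma~\ref{Lem:2.10} to conclude $\int_Y M(y,\boldxi+\nabla\boldw^n)\dy\to\int_Y M(y,\boldxi+\nabla\tilde\boldu)\dy$. That truncation argument, however, is not self-contained: it hinges on the energy inequality~\eqref{MB2}, i.e.
\[
\limsup_{n\to\infty}\int_Y\boldA(y,\boldxi+\nabla\boldw^n)\cdot\nabla\boldw^n\dy\le\int_Y\boldA(y,\boldxi+\nabla\tilde\boldu)\cdot\nabla\tilde\boldu\dy,
\]
together with the a priori $L^{M^*}$--bound needed for~\eqref{MB3}. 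In Lemma~\ref{Lem:2.10} both of these are by--products of a very particular sequence: the $\delta$--regularized Euler--Lagrange solutions produced inside the constructive proof of Theorem~\ref{Thm:WSExistMHoeldC} (one tests~\eqref{ApprSys} with the solution itself and passes to the limit). A generic modular--approximating sequence supplied by Lemma~\ref{Thm:ModDensity} enjoys modular convergence, weak$^*$ convergence, and a.e.\ convergence, but none of that controls $\int_Y\boldA(y,\boldxi+\nabla\boldw^n)\cdot\nabla\boldw^n\dy$ from above, nor bounds $\boldA(\cdot,\boldxi+\nabla\boldw^n)$ uniformly in $L^{M^*}$; without $\Delta_2$ you cannot upgrade modular convergence to convergence (or even boundedness) of the $M$--modulars. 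So the crucial step that converts the a.e.\ convergence into equi--integrability of $\{M(y,\boldxi+\nabla\boldw^n)\}$ has no support.

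The paper avoids this by taking a different path: it does not approximate $\tilde\boldu$ directly. Instead, it introduces the subdifferential $\boldA(y,\cdot)\in\partial_{\boldxi}M(y,\cdot)$, invokes Theorem~\ref{Thm:WSExistMHoeldC} to obtain a weak Euler--Lagrange solution $\boldu\in V^M_{per}$, uses modular density only to extend the set of admissible test functions to all of $V^M_{per}$ (equation~\eqref{kopl2}), and then shows by convexity that this $\boldu$ realizes the same minimum value as $\tilde\boldu$. The key point is that the constructive proof of Theorem~\ref{Thm:WSExistMHoeldC} automatically furnishes a sequence $\{\boldu^n\}\subset W^{1}_{per}E^M(Y;\eR^N)$ satisfying the analogues of~\eqref{MB1}--\eqref{MB2}, so the truncation/Vitali argument of Lemma~\ref{Lem:2.10} applies to $\boldu$ and yields $\int_Y M(y,\boldxi+\nabla\boldu^n)\dy\to\int_Y M(y,\boldxi+\nabla\boldu)\dy$, giving~\eqref{mindef2}. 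To repair your proposal along its own lines you would have to either derive a version of~\eqref{MB2} for your mollified sequence, or route through the Euler--Lagrange problem as the paper does.
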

\begin{proof}
The existence of a function $\tilde\boldu$ solving~\eqref{mindef} easily follows from the convexity of $M$ and the fact that $\int_{Y}M(y,\boldxi)\dy <\infty$. The uniqueness in case of the strict convexity is also a standard task. Thus, we focus only on~\eqref{mindef2}. We denote $\boldA(y,\boldxi):=\nabla_{\boldxi} M(y,\boldxi)$. Notice that due to the convexity $\boldA$ exists for almost all $\boldxi$ and we extend it to the whole $\eR^{d\times N}$ as a pseudodifferential. In addition, the operator $\boldA$ is a monotone mapping and there holds
$$
M(y,\boldxi) + M^*(y, \boldA(\boldxi)) = \boldA(\boldxi)\cdot \boldxi.
$$

Next, we use Theorem~\ref{Thm:WSExistMHoeldC} to get an existence of $\boldu\in V^M_{per}$, which solves for all $\boldw\in C^{\infty}_{per}(Y;\eR^N)$
\begin{equation}
\int_Y \boldA(y,\nabla \boldu+\boldxi) \cdot \nabla \boldw \dy=0. \label{kopl}
\end{equation}
Finally, due to the assumption on $M$ (namely the log-H\"{o}lder continuity~\ref{MTh}, we see from Theorem~\ref{Thm:ModDensity} that for any $\boldv\in V^M_{per}$ we can find a sequence $\{\boldv^n \}_{n=1}^\infty\in C^{\infty}_{per}(Y;\eR^N)$ that converges modularly to $\boldv$. Using the modular covergence we can set $\boldw:=\boldv^n$ in~\eqref{kopl}, which after letting $n\to \infty$ leads to
\begin{equation}
\int_Y \boldA(y,\nabla \boldu+\boldxi) \cdot \nabla \boldv \dy=0 \qquad \textrm{ for all } \boldv \in V^M_{per}\label{kopl2}
\end{equation}
and in particular to
$$
\int_Y \boldA(y,\nabla \boldu+\boldxi) \cdot (\nabla \boldu -\nabla \tilde\boldu) \dy=0.
$$
Hence, due to the convexity of $M$, we see that
$$
\int_{Y} M(y,\boldxi + \nabla \tilde\boldu) -M(y,\boldxi+ \nabla \boldu)\dy \ge \boldA(y,\nabla \boldu+\boldxi) \cdot (\nabla \tilde\boldu -\nabla\boldu) \dy=0.
$$
Therefore, $\boldu$ is also a minimizer to~\eqref{mindef}. In addition, following step by step the proof of Lemma~\ref{Lem:2.10}, we deduce that $\boldu$ can be constructed such that there is a sequence $\{\boldu^n\}_{n=1}^{\infty}\subset W^{1}_{per}E^M(Y;\eR^N)$ such that
$$
\int_{Y} M(y,\boldxi + \nabla \boldu)\dy = \lim_{n\to \infty} \int_{Y} M(y,\boldxi + \nabla \boldu^n)\dy.
$$
From this~\eqref{mindef2} directly follows.
\end{proof}
\end{appendix}

\section*{Acknowledgement}
M.~Bul\'{\i}\v{c}ek was partially supported
by the Czech Science Foundation (grant no. 16-03230S). The work of M.~Kalousek was supported by funds of the National Science Center awarded on the basis of decision No DEC-2013/09/D/ST1/03692. The research of A.~\'Swierczewska--Gwiazda and P.~Gwiazda have received funding from the National Science Centre, Poland, 2014/13/B/ST1/03094. This work was partially supported by the Simons - Foundation grant 346300 and the Polish Government MNiSW 2015-2019 matching fund.

\bibliographystyle{amsplain}
\bibliography{literature}

\def\cprime{$'$}
\providecommand{\bysame}{\leavevmode\hbox to3em{\hrulefill}\thinspace}
\providecommand{\MR}{\relax\ifhmode\unskip\space\fi MR }
\providecommand{\MRhref}[2]{%
  \href{http://www.ams.org/mathscinet-getitem?mr=#1}{#2}
}
\providecommand{\href}[2]{#2}
\begin{thebibliography}{10}

\bibitem{Al92}
G.~Allaire, \emph{Homogenization and two-scale convergence}, SIAM J. Math.
  Anal. \textbf{23} (1992), no.~6, 1482--1518.

\bibitem{G03}
E.~Giusti, \emph{Direct methods in the calculus of variations}, World
  Scientific Publishing Co., Inc., River Edge, NJ, 2003.

\bibitem{G79}
J.-P. Gossez, \emph{Orlicz-{S}obolev spaces and nonlinear elliptic boundary
  value problems}, Nonlinear analysis, function spaces and applications
  ({P}roc. {S}pring {S}chool, {H}orni {B}radlo, 1978), Teubner, Leipzig, 1979,
  pp.~59--94.

\bibitem{GMW12}
P.~Gwiazda, P.~Minakowski, and A.~Wr{\'o}blewska-Kami{\'n}ska, \emph{Elliptic
  problems in generalized {O}rlicz-{M}usielak spaces}, Cent. Eur. J. Math.
  \textbf{10} (2012), no.~6, 2019--2032.

\bibitem{GSZG17}
P.~Gwiazda, I.~Skrzypczak, and A.~Zatorska-Goldstein, \emph{Existence of
  renormalized solutions to elliptic equation in {M}usielak-{O}rlicz space},
  https://arxiv.org/abs/1701.08970 (2017).

\bibitem{GSG08}
P.~Gwiazda and A.~{\'S}wierczewska-Gwiazda, \emph{On non-{N}ewtonian fluids
  with a property of rapid thickening under different stimulus}, Math. Models
  Methods Appl. Sci. \textbf{18} (2008), no.~7, 1073--1092.

\bibitem{GGG}
Piotr Gwiazda, Piotr Minakowski, and Agnieszka \'Swierczewska-Gwiazda, \emph{On
  the anisotropic {O}rlicz spaces applied in the problems of continuum
  mechanics}, Discrete Contin. Dyn. Syst. Ser. S \textbf{6} (2013), no.~5,
  1291--1306. \MR{3039698}

\bibitem{ZKO94}
V.~V. Jikov, S.~M. Kozlov, and O.~A. Ole{\u\i}nik, \emph{Homogenization of
  differential operators and integral functionals}, Springer-Verlag, Berlin,
  1994, Translated from the Russian.

\bibitem{KJF77}
A.~Kufner, O.~John, and S.~Fu{\v{c}}{\'{\i}}k, \emph{Function spaces},
  Noordhoff International Publishing, Leyden; Academia, Prague, 1977,
  Monographs and Textbooks on Mechanics of Solids and Fluids; Mechanics:
  Analysis.

\bibitem{M96}
S.~M{\"u}ller, \emph{Variational models for microstructure and phase
  transitions}, Calculus of variations and geometric evolution problems
  ({C}etraro, 1996), Lecture Notes in Math., vol. 1713, Springer, Berlin, 1999,
  pp.~85--210.

\bibitem{MT99}
V.~Mustonen and M.i Tienari, \emph{On monotone-like mappings in
  {O}rlicz-{S}obolev spaces}, Math. Bohem. \textbf{124} (1999), no.~2-3,
  255--271.

\bibitem{OZ82}
O.~A. Ole{\u\i}nik and V.~V. Zhikov, \emph{On the homogenization of elliptic
  operators with almost-periodic coefficients}, Proceedings of the
  international conference on partial differential equations dedicated to
  {L}uigi {A}merio on his 70th birthday ({M}ilan/{C}omo, 1982), vol.~52, 1982,
  pp.~149--166 (1985).

\bibitem{SP}
Enrique S{\'a}nchez-Palencia, \emph{Nonhomogeneous media and vibration theory},
  Lecture Notes in Physics, vol. 127, Springer-Verlag, Berlin-New York, 1980.

\bibitem{S05}
G.~Schappacher, \emph{A notion of {O}rlicz spaces for vector valued functions},
  Applications of Mathematics \textbf{50} (2005), no.~4, 355--386.

\bibitem{SI69}
M.~S. Skaff, \emph{Vector valued {O}rlicz spaces generalized {$N$}-functions.
  {I}}, Pacific J. Math. \textbf{28} (1969), 193--206.

\bibitem{SII69}
\bysame, \emph{Vector valued {O}rlicz spaces. {II}}, Pacific J. Math.
  \textbf{28} (1969), 413--430.

\bibitem{Tartar2}
L.~Tartar, \emph{Cours peccot au coll\`{e}ge de france}, partially written by
  F. Murat in S{\'{e}}minaire d’Analyse Fonctionelle et Num{\'e}rique de
  l’Universit\'{e} d’Alger, unpublished., 1979.

\bibitem{Tartar}
\bysame, \emph{Convergence of the homogenization process}, Appendix of
  \cite{SP} (1980).

\bibitem{V06}
A.~Visintin, \emph{Towards a two-scale calculus}, ESAIM Control Optim. Calc.
  Var. \textbf{12} (2006), no.~3, 371--397.

\bibitem{ZP11}
V.~V. Zhikov and S.~E. Pastukhova, \emph{Homogenization of monotone operators
  under conditions of coercitivity and growth of variable order}, Mat. Zametki
  \textbf{90} (2011), no.~1, 53--69.

\end{thebibliography}

\end{document}